\documentclass[preprint]{elsarticle}
\usepackage[latin1]{inputenc}
\usepackage[T1]{fontenc}
\usepackage[bookmarksnumbered=true]{hyperref}
\usepackage{amsfonts,amstext,amsmath,amsthm,amssymb}
\usepackage{amscd}
\usepackage{fullpage}
\usepackage{color}
\usepackage{graphicx}
\usepackage{verbatim}

\newtheoremstyle{slplain}
  {1.0\baselineskip\@plus.2\baselineskip\@minus.2\baselineskip}
  {.0\baselineskip\@plus.2\baselineskip\@minus.2\baselineskip}
  {\slshape}
  {}
  {\bfseries}
  {}
  { }
  {}

\DefineNamedColor{named}{JungleGreen} {cmyk}{0.99,0,0.52,0}
\DefineNamedColor{named}{Gray}{cmyk}{0,0,0,0.50}
\DefineNamedColor{named}{ProcessBlue}   {cmyk}{0.96,0,0.0,0}
\DefineNamedColor{named}{Purple}        {cmyk}{0.45,0.86,0,0}

\usepackage[normalem]{ulem}
\DefineNamedColor{named}{orange} {rgb}{1,0.55,0}

\newtheorem{example}{example}[section]
\newtheorem{theorem}[example]{Theorem}
\newtheorem{proposition}[example]{Proposition}
\newtheorem{definition}[example]{Definition}
\newtheorem{lemma}[example]{Lemma}
\newtheorem{corollary}[example]{Corollary}

\providecommand{\abs}[1]{\left|#1\right|}
\providecommand{\norm}[1]{\left\Vert#1\right\Vert}
\providecommand{\ip}[2]{\left\langle#1,#2\right\rangle}
\providecommand{\br}[1]{\left\lbrace#1\right\rbrace}
\providecommand{\fl}[1]{\left\lfloor#1\right\rfloor}
\providecommand{\cl}[1]{\left\lceil#1\right\rceil}

\theoremstyle{definition}
\newtheorem{remark}[example]{Remark}

\newcommand{\thickhline}{\noalign{\hrule height 0.8pt}}
\newcommand\T{\rule{0pt}{2.6ex}}
\newcommand\B{\rule[-1.2ex]{0pt}{0pt}}

\newcommand*{\cH}{\mathcal{H}}
\newcommand*{\cS}{\mathcal{S}}
\newcommand*{\cT}{\mathcal{T}}

\newcommand*{\bbN}{\mathbb{N}}
\newcommand*{\bbZ}{\mathbb{Z}}
\newcommand*{\bbR}{\mathbb{R}}

\begin{document}
\begin{frontmatter}



\title{On optimal wavelet reconstructions from Fourier samples: linearity and universality of the stable sampling rate}


\author[purdue]{B. Adcock\corref{cor2}} 
\ead{adcock@purdue.edu}

\author[cambridge,vienna]{A. C. Hansen} 
\ead{a.hansen@damtp.cam.ac.uk}

\author[cambridge]{C. Poon\corref{cor1}} 
\ead{cmhsp2@cam.ac.uk}

\cortext[cor1]{Corresponding author}

\address[purdue]{Department of Mathematics, Purdue University}
\address[cambridge]{Department of Applied Mathematics and Theoretical Physics, University of Cambridge}
\address[vienna]{Institut f\"{u}r Mathematik, Universit\"{a}t Wien,}

\begin{abstract}

In this paper we study the problem of computing wavelet coefficients of compactly supported functions from their Fourier samples.  For this, we use the recently introduced framework of generalized sampling.  Our first result demonstrates that using generalized sampling one obtains a stable and accurate reconstruction, provided the number of Fourier samples grows linearly in the number of wavelet coefficients recovered.  For the class of Daubechies wavelets we derive the exact constant of proportionality.

Our second result concerns the optimality of generalized sampling for this problem.  Under some mild assumptions we show that generalized sampling cannot be outperformed in terms of approximation quality by more than a constant factor.  Moreover, for the class of so-called perfect methods, any attempt to lower the sampling ratio below a certain critical threshold necessarily results in exponential ill-conditioning.  Thus generalized sampling provides a nearly-optimal solution to this problem.
\end{abstract}

\begin{keyword}
Sampling theory \sep Generalized sampling \sep Wavelets \sep Wavelet-encoding \sep Fourier series \sep Hilbert space

\MSC[2010] 94A20 \sep 42C40 \sep 65T60 \sep 41A65 \sep 46C05

\end{keyword}

\end{frontmatter}

\section{Introduction}\label{s:introduction}
One of the most fundamental problems in sampling theory is the issue of how to recover an object -- an image or signal, for example -- from a finite, and typically fixed, collection of its measurements.  This problem lies at the heart of countless algorithms, with applications ranging from medical imaging to astronomy.

An important instance of this problem is the recovery of a compactly supported function from pointwise measurements of its Fourier transform.  This problem occurs notably in Magnetic Resonance Imaging (MRI), as well as other applications such as radar. The classical approach for this problem is to recover $f$ by computing a discrete Fourier transform (DFT) of the given data.  However, this approach suffers from a number of drawbacks, including the sensitivity to motion and the presence of unpleasant Gibbs ringing \cite{HealyWeaverWaveletIEEE,WeaverEtAlWaveletEncoding}.  Such phenomena can present serious issues in applications.

\subsection{Wavelets in imaging}
It is known that many real-life images can be much more efficiently represented by using wavelets than by their Fourier series.  Images may be sparse in wavelets, or their coefficients may have improved decay properties.    Representing  medical images in this way also has several other benefits over the classical Fourier representation.  These include better compressibility, improved feature detection (see \cite{UnserAldroubiWaveletReview,UnserAldroubiLaineEditorial} and references therein), and easier and more effective denoising \cite{LaineWaveletsBiomed,NowakWaveletDenoise,WeaverEtAlWaveletFiltering}.  For these reasons, the use of wavelets in biomedical imaging applications has been a significant area of research for several decades \cite{LaineWaveletsBiomed,UnserAldroubiWaveletReview,UnserAldroubiLaineEditorial}.

Seeking to exploit these beneficial properties, an approach to recover wavelet coefficients directly in MRI was introduced in 1992 by Weaver et al \cite{HealyWeaverWaveletIEEE,WeaverEtAlWaveletEncoding} (see also \cite{GelmanWood3DWaveletEncoding,LaineWaveletsBiomed,PanychEtAlWaveletEncoded} and references therein).  This is known as \textit{wavelet-encoded} MRI.  In this technique, the MR scanner itself is modified to sample wavelet coefficients along one dimension, with Fourier sampling, followed by a one-dimensional DFT, applied in the other.  The resulting reconstructed image suffers less from Gibbs ringing, has fewer motion artefacts, and can in principle be acquired more rapidly \cite{PanychWaveletEncoding,PanychEtAlWaveletEncoded}.  For a medical perspective on wavelet encoding, and a discussion on how it can be combined with other imaging techniques such as parallel MRI, see \cite{KyriakosEtAlGeneralizedEncoding}.

Unfortunately, there are a number of disadvantages to wavelet encoding, which limit its applicability.  These include low signal-to-noise ratio \cite{PanychWaveletEncoding,WeaverEtAlWaveletEncoding}, and the extra complications encountered in the acquisition process due to having to modify the MR scanner \cite{LaineWaveletsBiomed}.  Moreover, the state-of-the-art wavelet encoding allows only for reconstructions of wavelet coefficients of a 2D image in one direction, and thus does not permit one to take full advantage of general wavelets.

Nonetheless, the intensity of work on wavelets in MRI, and in particular on wavelet encoding techniques, indicates the importance of the problem of computing wavelet coefficients of biomedical imaging.  It also serves to highlight the fact that this problem remains largely unsolved. 

With this in mind, the purpose of this paper is to introduce and analyse a different solution to this problem, known as \textit{generalized sampling}.  Unlike wavelet encoding, which is primarily an engineering exercise in which the scanner itself is modified to produce different samples, we take the mathematical viewpoint and consider the samples as being fixed Fourier samples, and then seek to reconstruct wavelet coefficients directly via a post-processing algorithm.  Our main conclusion is that one can perform wavelet encoding in applications such as MRI by generalized sampling without altering the scanner at all. This allows for the use of arbitrary wavelets and removes any hardware restrictions.

The typical MRI problem concerns the recovery of two- or three-dimensional images from Fourier measurements.  In this paper, we shall consider only the one-dimensional case.  As we explain further in Section \ref{s:conclusion}, both the technique of generalized sampling and its analysis can be extended to the higher-dimensional setting.  This is a topic of ongoing work.  The development and analysis of the one-dimensional case, i.e.\ the topic of this paper, can be viewed as a vital first step in this direction.

\begin{remark}\label{r:DFT}
The reader may wonder at this stage why wavelet encoding is necessary. Why could one not simply recover wavelet coefficients from standard MRI data by applying the DFT and DWT (discrete wavelet transform) in turn? There are two reasons. First, the use of DFT yields a discrete (pixel-based) version of the truncated Fourier series. Hence, by applying the DWT one (at best) obtains the wavelet coefficients of the truncated Fourier series and not the actual wavelet coefficients of the image itself. Second, the recovery algorithm using DFT and DWT would be as follows. The "wavelet coefficients" are obtained by
$$
x = \mathrm{DWT} \cdot \mathrm{DFT}^{-1} y,
$$
where $y$ is a vector of the Fourier samples.
However, when mapping these coefficients back to the pixel domain, one gets
$$
\tilde x = \mathrm{DWT}^{-1}x = \mathrm{DFT}^{-1} y,
$$
which is exactly what we would get in the first place using DFT. In particular, nothing is gained here in terms of the quality of the reconstructed image.  By contrast, wavelet encoding techniques seek to reconstruct the true wavelet coefficients directly.  This yields a different reconstruction with qualities determined by the wavelet used, and not by the original Fourier series.
\end{remark}

\subsection{Generalized sampling}

In sampling theory, the mathematical problem of recovering the coefficients of a signal or image in a particular basis from samples taken with respect to another basis has been studied for several decades \cite{unser2000sampling}.  Motivating this is the fact that many images and signals can be better represented in terms of a different basis (e.g. splines \cite{UnserSplinesFit} or the aforementioned wavelets) than the basis in which they are sampled (e.g. the Fourier basis).  Some of the earliest work on this problem in its abstract form was carried out by Unser \& Aldroubi, who introduced a mathematical reconstruction framework known as \textit{consistent reconstructions} for shift-invariant sampling and reconstruction spaces \cite{unser1994general} (see also \cite{unserzerubia}).  This was later considered by Eldar et al, who extended this framework to frames in arbitrary Hilbert spaces \cite{EldarRobConsistSamp,eldar2003FAA,eldar2003sampling,eldar2005general}.  Further developments to more general types of signal models were introduced in \cite{LuDoUnionSubspace} (see also \cite{BlumensathUnionSubspace,EldarUnionSubspace}).

Whilst consistent reconstructions are quite popular in engineering applications, there are a number of issues.  As discussed in \cite{BAACHSampTA,BAACHShannon,EldarMinimax,UnserHirabayashiConsist}, consistent reconstructions have the significant drawback of being, in general, neither numerically stable nor convergent as the number of samples is increased.  Hence, when applied to the important problem of recovering wavelet coefficients of MR images, they can result in severe amplification of noise and round-off error.

Nonetheless, it transpires that these issues can be overcome completely by using a different approach, known as \textit{generalized sampling}.  Introduced by Adcock \& Hansen in \cite{BAACHShannon,BAACHAccRecov}, based on elements from \cite{hansen2011}, this framework allows one to recover a signal $f$ modelled as an element of a separable Hilbert space $\mathcal{H}$ in terms of any Riesz basis $\{ \varphi_j \}^{\infty}_{j=1}$ from samples $\{ \ip{f}{s_j} \}^{\infty}_{j=1}$ taken with respect to any other Riesz basis $\{ s_j \}^{\infty}_{j=1}$ of $\mathcal{H}$.  The resulting reconstruction is both convergent and numerically stable, and therefore an obvious candidate for the wavelet recovery problem.  The extension of this framework to frames, as opposed to bases, was presented in \cite{AHHTillposed}.  See also \cite{BAACHOptimality}.

Keeping this in mind, the aim of this paper is to show that generalized sampling effectively solves the longstanding problem of recovering wavelet coefficients from Fourier samples in the one-dimensional case.  Our main results are explained in more detail in the next section.

\subsection{Main results}
Generalized sampling obtains a reconstruction by performing a simple least-squares procedure.  The fundamental principle which gives this method its stability and accuracy (as opposed to a consistent reconstruction) is that the number of computed coefficients $N$ in the reconstruction basis  $\{ \varphi_j \}^{\infty}_{j=1}$ (i.e. the wavelet basis) should be allowed to differ from the number $M$ of acquired samples $\{ \ip{f}{s_j} \}^{M}_{j=1}$ (i.e. Fourier samples).  In \cite{BAACHOptimality}, this was posed in terms of the so-called \textit{stable sampling rate} $\Theta(N;\theta)$.  Given $N$ coefficients to be recovered, sampling \textit{at a rate} $M \geq \Theta(N;\theta)$ ensures a numerically stable and quasi-optimal reconstruction of $f$ (see Section \ref{s:GS} for definitions), with the stability and quasi-optimality constants depending on the fixed parameter $\theta$.  

Understanding the behaviour of $\Theta(N;\theta)$ is critically important from a practical standpoint.  In the problem we consider in this paper, for example, it allows one to determine \textit{a priori} how many Fourier samples are required to compute $N$ wavelet coefficients in a manner that is stable and accurate (i.e. the computed wavelet coefficients closely approximate the exact wavelet coefficients).  Clearly, it is both wasteful and time-consuming to acquire more samples than necessary.  Hence, the main goal of this paper is to obtain good estimates for the stable sampling rate in the context of reconstructing in compactly supported Multiresolution Analysis (MRA) wavelet bases from one-dimensional Fourier-encoded data. Precise definitions of the reconstruction and sampling spaces can be found in Section \ref{sec:setup:recostruction}

The first result we prove in this paper is that the stable sampling rate is linear  in this setting.  Thus, if $N$ wavelet coefficients are required, one only needs $\mathcal{O}(N)$ Fourier samples of $f$ to apply generalized sampling.  In this sense, wavelets give rise to \textit{ideal} bases for the Fourier samples reconstruction problem: up to a constant factor, there is a one-to-one ratio correspondence between Fourier samples and wavelet coefficients.  Hence generalized sampling not only solves the long-standing problem of how to recover wavelet coefficients from MR data, but it also does so in a way that is, up to a constant factor, optimal.  

This result suggests that little can be gained in terms of reconstruction quality by altering the MR scanner, as is done in wavelet encoding techniques.  The problem of recovering wavelet coefficients can be readily solved without doing this by post-processing of the standard Fourier-encoded MR data with generalized sampling.  We remark that this conclusion is due completely to the linear scaling of $\Theta(N;\theta)$.  Had the scaling been more severe, as can be the case for other reconstruction bases -- orthogonal polynomials, for example, have quadratic stable sampling rates, $\Theta(N;\theta) = \mathcal{O}(N^2)$ (a result due originally to Hrycak \& Gr\"ochenig \cite{hrycakIPRM}, see also \cite{BAACHAccRecov}) -- then generalized sampling may well not be as good an approach to the problem as alternatives based on modifying the sampling process.

Given that wavelets have linear stable sampling rates, it is natural to ask how large the ratio $\eta(\theta) = M/N$, which we henceforth refer to as the \textit{stable sampling ratio}, is required to be.  Specifically, is it possible to have the optimal ratio $\eta(\theta) = 1$ for some moderate value of $\theta$, and thus get a stable, accurate reconstruction using an equal number of wavelets as Fourier samples?  Our second result shows that in general this is not the case.  Indeed, every pair of Fourier and wavelet bases is associated with a critical threshold $\eta^*$ below which the reconstruction becomes exponentially unstable. On the other hand, for certain wavelet bases, such as Daubechies wavelets, a ratio of at least $\eta^*$ will ensure complete stability.

The third issue we address in this paper is the question of optimality of generalized sampling: that is, whether or not it can be outperformed by a different method.  This question is equivalent to asking whether the stable sampling rate is a quantity intrinsic to generalized sampling, or whether it is in fact \textit{universal}.  In other words, does the stable sampling rate place a fundamental limit on the number of Fourier samples required to recover $N$ wavelet coefficients in a stable, accurate manner, regardless of the method used?

Optimality of generalized sampling was first discussed in \cite{BAACHOptimality}.  Using a general result proved therein, we show that the stable sampling rate is indeed universal for all so-called \textit{perfect} methods (i.e. methods which recover finite sums of wavelets in a reasonable way; see Section \ref{s:GS} for a definition).  As a result of this, we show that for wavelet reconstructions, any perfect method with ratio less than $\eta^*$ must be exponentially unstable. Hence, there is always a limit to the amount of improvement over generalized sampling that any perfect method can offer.

Unfortunately, perfect methods represent only a subclass of all possible reconstruction techniques.  Hence it cannot be claimed that the stable sampling rate is truly universal.  Indeed, perfectness of a method implies that it recovers all functions in a particular class rather well.  This leads to the following question: is it possible to devise a different method which outperforms generalized sampling for a single function $f$?  Using our results on the linear scaling of the stable sampling rate, we show under a mild assumption that such a method can at best give a reconstruction whose approximation error is a constant factor smaller than that of generalized sampling.  Thus, although it is possible to outperform generalized sampling in terms of approximation error, only the constant can be improved and not the asymptotic rate.  In this sense, generalized sampling is, up to a constant factor, an oracle for the problem.

\subsection{Related works}
Similar ideas for reconstructions in MRI were initially introduced by Pruessmann et al under the name of Sensitivity Encoding in \cite{pruessmann1999sense}.   They considered reconstructions in terms of voxel shapes from Fourier-encoded data by solving a least squares problem and this method has since been used in more general wavelet reconstructions. However, the least squares problem can become ill-posed and various authors have sought to resolve this by imposing some quadratic regularization constraints.  In a later work,  \cite{guerquin2011fast} introduced an $l^1$ regularization term to resolve this ill-posedness (see also \cite{daubechies2004iterative}). Whilst the mentioned works provide algorithms for the computation of the reconstructions, this ill-posedness and the error from the true image is not well understood.  The main contribution of \cite{BAACHShannon,BAACHOptimality} is to provide an abstract framework, known as generalized sampling, under which reconstruction schemes including these can be formally analysed.  By considering the one-dimensional problem of wavelet reconstructions from Fourier samples in the framework of generalized sampling, we provide a rigorous analysis of the error and stability of the resultant scheme. We demonstrate that when the number of Fourier samples and number of wavelet coefficients are chosen in accordance with a linear stable sampling rate, the generalized sampling scheme is stable and convergent and there is no need for extra regularization constraints. The work here may thus be seen as a starting point for a theoretical understanding of the scheme presented in \cite{pruessmann1999sense}. 

\subsection{Outline}
The outline for the remainder of this paper is as follows.  In Section \ref{s:GS}, we recap the generalized sampling framework of \cite{BAACHShannon,BAACHAccRecov,BAACHOptimality}. In Section \ref{s:examples}, we present two examples to illustrate the use of generalized sampling.  The main results of the paper are presented and discussed in Section \ref{s:main}, and proofs are given in Sections \ref{s:prf1}--\ref{s:prf3}.  In Section \ref{s:numexp}, we provide numerical results.

\section{Generalized sampling}\label{s:GS}

\subsection{Generalized sampling}

In this section, we recap the main details of generalized sampling from \cite{BAACHShannon,BAACHAccRecov}, and in particular \cite{BAACHOptimality}. Let $\mathcal{H}$ be a separable Hilbert space with inner product $\ip{\cdot}{\cdot}$ and norm $\norm{\cdot}$.  Suppose that $\mathcal{S}$ and $\mathcal{T}$ are closed subspaces of $\mathcal{H}$ satisfying the subspace condition
\begin{equation}\label{subspacecondition}
\mbox{$\mathcal{T} \cap \mathcal{S}^{\perp} = \{0\}$ and $\mathcal{T} + \mathcal{S}^{\perp}$ is closed in $\mathcal{H}$.}
\end{equation}
Let $\{ s_j \}^{\infty}_{j=1}$ be an orthonormal basis for $\cS$, and for $f \in \cH$, let 
\begin{equation*}
\hat{f}_j = \ip{f}{s_j},\quad j \in \mathbb{N},
\end{equation*}
be the \textit{samples} of $f$.  The reconstruction problem is to recover $f$ with an element $\tilde{f} \in \cT$ from its samples $\{ \hat{f}_j \}^{\infty}_{j=1}$.

In practice, one does not have access to the whole set $\{ \hat{f}_j \}^{\infty}_{j=1}$ of samples, nor can one process infinite amounts of information.  Hence, in computations we consider the problem of recovering $f$ from its first $M$ samples
\begin{equation*}
\hat{f}_1,\ldots,\hat{f}_M.
\end{equation*}
Also, it is usual to assume that there exists a sequence $\{ \mathcal{T}_N \}^{\infty}_{N=1}$ of finite-dimensional subspaces of $\cT$ satisfying
\begin{equation}\label{T_N}
\cT_1 \subseteq \cT_2 \subseteq \cdots \subseteq \cT,\quad \overline{\bigcup^{\infty}_{N=1} \cT_N} = \cT.
\end{equation}
For example, if $\{ \varphi_j \}^{\infty}_{j=1}$ is a frame or a Riesz basis for $\cT$, then one typically has
\begin{equation*}
\cT_N = \mbox{span} \left \{ \varphi_1,\ldots,\varphi_N \right \}.
\end{equation*}
The reconstruction problem is now formulated as follows: given $N \in \bbN$, compute a reconstruction $\tilde{f}_{N,M} \in \cT_N$ of $f$ from the samples $\{ \hat{f}_j \}^{M}_{j=1}$.

In order to formulate what constitutes a `good' reconstruction, we consider the following two definitions \cite{BAACHOptimality}:
\begin{definition}
Let $F_{N,M} : \cH	 \rightarrow \cT_N$. 
The quasi-optimality constant $\mu = \mu(F_{N,M})$ is the least constant such that
\begin{equation*}
\| f - F_{N,M}(f) \| \leq \mu \| f - Q_N f \|,\quad \forall f \in \cH,
\end{equation*}
where $Q_N : \cH \rightarrow \cT_N$ is the orthogonal projection onto $\cT_N$.  If no such constant exists, we write $\mu = \infty$.  We say that $F_{N,M}$ is quasi-optimal if $\mu(F_{N,M})$ is small.
\end{definition}
Note that $Q_{N}f$ is the best approximation in norm to $f$ from $\mathcal{T}_N$.  So quasi-optimality means that the difference in norm between $f$ and $F_{N,M}(f)$ is at most a constant factor $\mu$ of the difference between $f$ and its best approximation in the subspace $\mathcal{T}_N$.

We also define the condition number of a reconstruction:
\begin{definition}
Let $F_{N,M} : \cH	 \rightarrow \cT_N$ be a mapping such that, for each $f \in \mathcal{H}$, $F_{N,M}(f)$ depends only on the samples $\{ \hat{f}_j \}^{M}_{j=1}$.
 The condition number of $\kappa(F_{N,M})$ is given by
\begin{equation*}
\kappa(F_{N,M})=\sup_{f \in \cH} \lim_{\epsilon \rightarrow 0^+} \sup_{\substack{g \in \cH \\ 0 < \| \hat{g} \|_{l^2} \leq \epsilon}} \frac{\| F_{N,M}(f+g) - F_{N,M}(f) \|}{\| \hat{g} \|_{l^2}},
\end{equation*}
where $\hat{g} = \{ \hat{g}_j \}^{M}_{j=1} \in \mathbb{C}^M$.  The mapping $F_{N,M}$ is well-conditioned if $\kappa(F_{N,M})$ is small and ill-conditioned otherwise.
\end{definition}

We say that the reconstruction $F_{N,M}$ is `good' if it is stable and quasi-optimal.  In other words, if the \textit{reconstruction constant}
\begin{equation*}
C(F_{N,M}) = \max \{ \kappa(F_{N,M}) , \mu(F_{N,M}) \},
\end{equation*}
is small.  

As we shall explain in a moment, the key to obtaining a good reconstruction is to allow the parameter $M$, the number of samples, to vary independently from $N$.  To this end, suppose now we write $P_M : \cH \rightarrow \cS_M$ for the orthogonal projection onto the subspace $\cS_M =\mathrm{span}\br{s_1,\ldots,s_M}$, i.e.
\begin{equation*}
P_M g = \sum^{M}_{j=1} \ip{g}{s_j} s_j,\quad g \in \cH.
\end{equation*}
The method of tackling the reconstruction problem proposed in \cite{BAACHShannon} is to let $\tilde{f}_{N,M} = F_{N,M}(f) \in \cT_N$ be defined by 
\begin{align}\label{eq:gensamp_soln}
 \ip{P_M \tilde{f}_{N,M}}{\varphi_j}=\ip{P_M f}{\varphi_j}, \ \ j=1,\ldots,N.
\end{align}
Note that solving (\ref{eq:gensamp_soln}) is equivalent to finding $\alpha^{[N,M]}=\br{\alpha_1^{[N,M]},\ldots,\alpha_N^{[N,M]}}\in \mathbb{C}^N$ as the least-squares solution to the problem
\begin{equation}\label{least_sq_formulation}
U^{[N,M]} \alpha^{[N,M]} = \hat{f}^{[M]},
\end{equation}
where $\hat{f}^{[M]} = \{ \ip{f}{s_1},\ldots,\ip{f}{s_M} \}$ and $U^{[N,M]}$ is the $M$ by $N$ matrix whose $(i,j)^{th}$ entry is $\ip{\varphi_j}{s_i}$.  The reconstruction $\tilde{f}_{N,M}$ is then given by $\sum^{N}_{j=1} \alpha^{[N,M]}_{j} \varphi_j$. 

Furthermore, the uniqueness of the solution to (\ref{least_sq_formulation}), the condition number and the quasi-optimality of generalized sampling are all determined by the subspace angle between $\cT_N$ and $\cS_M$, namely, the value  $C_{N,M}=\sqrt{\inf_{\substack{\varphi \in \cT_N \\ \| \varphi \|=1}} \ip{P_M \varphi}{\varphi}}$. In \cite{BAACHShannon,BAACHOptimality}, it was established that when $C_{N,M}>0$, the solution is uniquely
\begin{equation}\label{eq:gs_exact_rep}
\alpha^{[N,M]} = \left(\left( U^{[N,M]}\right)^* U^{[N,M]}\right)^{-1}\left(U^{[N,M]}\right)^*\hat{f}^{[M]}
\end{equation}
and the reconstruction constant $C(F_{N,M})$ of generalized sampling satisfies
\begin{equation*}
C(F_{N,M}) = \kappa(F_{N,M}) = \mu(F_{N,M} ) = \frac{1}{C_{N,M}}.
\end{equation*}
Moreover, since $P_M \rightarrow P$ strongly on $\cH$ as $M \rightarrow \infty$ (where $P : \cH \rightarrow \cH$ is the projection onto $\mathcal{S}$), one has, via (\ref{subspacecondition}) and (\ref{T_N}), that
\begin{equation*}
C_{N,M} \rightarrow 1,\quad M \rightarrow \infty,
\end{equation*}
for fixed $N \in \bbN$.  Thus, one obtains a good reconstruction by allowing $M$ to be sufficiently large in comparison to $N$.  

To quantify how large $M$ is required to be, the concept of the \textit{stable sampling rate} was introduced in \cite{BAACHOptimality}:
\begin{definition}
For $N \in \bbN$ and $\theta \in (1,\infty)$, the stable sampling rate is given by
\begin{equation*}
\Theta(N;\theta) = \min \left \{ M \in \bbN : \frac{1}{C_{N,M}} < \theta \right \}.
\end{equation*}
\end{definition}
This notion of the stable sampling rate is important as it determines the number of samples required for guaranteed, quasi-optimal and numerically stable reconstructions. In particular, for all $M \geq \Theta(N;\theta)$, we have that $\tilde{f}_{N,M}$ is quasi-optimal to $f$ from $\mathcal{T}_N$ with constant a most $\theta$, and the condition number $\kappa(F_{N,M})$ is at worst $\theta$.

\subsection{Optimality of generalized sampling}\label{ss:optimal}
In \cite{BAACHOptimality} the question of optimality of generalized sampling was also discussed.  We now recap the main results proved, since they will be of use later.  We first recall the definition of a perfect method:
\begin{definition}
Let $G_{N,M} : \cH \rightarrow \cT_N$ be a mapping such that, for each $f \in \mathcal{H}$, $G_{N,M}(f)$ depends only on the samples $\{ \hat{f}_j \}^{M}_{j=1}$. If $G_{N,M}(f)=f$ for all $f\in\mathcal{T}_N$, then $G_{N,M}$ is said to be perfect.
\end{definition}
Observe that the notion of perfectness is strictly weaker than quasi-optimality.  Also, we remark that generalized sampling is a perfect method, as can be seen from (\ref{eq:gensamp_soln}).

The first result of \cite{BAACHOptimality} concerns such methods:
\begin{theorem}\label{t:perfectstability}
For $M \geq N $ let $G_{N,M} :\mathcal{H}\to\mathcal{T}_N$ be a perfect reconstruction method such that, for each $f \in \mathcal{H}$, $G_{N,M}(f)$ depends only on the samples $\{ \hat{f}_j \}^{M}_{j=1}$.  Then the condition number
\begin{equation*}
\kappa(G_{N,M}) \geq \kappa(F_{N,M}),
\end{equation*}
where $F_{N,M}$ is the generalized sampling reconstruction.
\end{theorem}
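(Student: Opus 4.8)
The plan is to exploit the only hypothesis we have on $G_{N,M}$, namely perfectness, which pins its values down \emph{exactly} on the finite-dimensional space $\cT_N$. We then feed into the condition number the single worst direction in $\cT_N$: a unit vector $\varphi_*\in\cT_N$ whose first $M$ Fourier coefficients have the smallest possible norm. That norm is precisely $C_{N,M}$, and perfectness forces $G_{N,M}$ to magnify a perturbation along $\varphi_*$ by exactly $1/C_{N,M}$; since the excerpt already records $\kappa(F_{N,M}) = 1/C_{N,M}$ for generalized sampling, this gives $\kappa(G_{N,M}) \ge \kappa(F_{N,M})$.

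First I would make two preliminary observations. Since $\{s_j\}$ is orthonormal, $\norm{\hat{g}}_{l^2} = \norm{P_M g}$ for all $g\in\cH$, and since $\ip{P_M\varphi}{\varphi} = \norm{P_M\varphi}^2$ the defining constant can be written $C_{N,M} = \inf\{\norm{P_M\varphi} : \varphi\in\cT_N,\ \norm{\varphi} = 1\}$. The unit sphere of the finite-dimensional space $\cT_N$ is compact and $\varphi\mapsto\norm{P_M\varphi}$ is continuous, so the infimum is attained at some $\varphi_*\in\cT_N$ with $\norm{\varphi_*}=1$ and $\norm{P_M\varphi_*}=C_{N,M}$. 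I would then dispose of the degenerate case $C_{N,M}=0$: here $\varphi_*$ is a nonzero element of $\cT_N\cap\cS_M^{\perp}$, so every multiple $t\varphi_*$ lies in $\cT_N$ and has all of its first $M$ samples equal to zero; perfectness would force $G_{N,M}(t\varphi_*)=t\varphi_*$, whereas $G_{N,M}(t\varphi_*)$ can depend only on the zero sample vector and hence equals $G_{N,M}(0)=0$ --- a contradiction for $t\ne 0$. Thus in this case (which includes every $M<N$) no perfect method exists and the statement holds vacuously, so from now on $C_{N,M}>0$ and $\kappa(F_{N,M})=1/C_{N,M}$.

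For the main case, take $f=0$ in the definition of $\kappa(G_{N,M})$ and, for $t>0$, put $g=t\varphi_*\in\cT_N$. Perfectness gives $G_{N,M}(0+g)=G_{N,M}(t\varphi_*)=t\varphi_*$ and $G_{N,M}(0)=0$, while $\norm{\hat{g}}_{l^2}=t\norm{P_M\varphi_*}=tC_{N,M}>0$. Hence
\[
\frac{\norm{G_{N,M}(0+g)-G_{N,M}(0)}}{\norm{\hat{g}}_{l^2}}=\frac{t\norm{\varphi_*}}{tC_{N,M}}=\frac{1}{C_{N,M}},
\]
independently of $t$. Given any $\epsilon>0$, taking $t=\epsilon/C_{N,M}$ makes $g$ admissible in the inner supremum of the condition number, so that supremum is at least $1/C_{N,M}$ for every $\epsilon$; letting $\epsilon\to 0^+$ and then bounding the outer supremum over $f$ from below by the value at $f=0$ yields $\kappa(G_{N,M})\ge 1/C_{N,M}=\kappa(F_{N,M})$.

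I do not expect a genuine obstacle: once one notices that a perfect method is completely determined on $\cT_N$, everything else is bookkeeping. The two points deserving a little care are (i) the degenerate case $C_{N,M}=0$, where generalized sampling is not even well-defined and one must instead observe that the hypotheses of the theorem cannot be met; and (ii) checking that the amplification ratio along $\varphi_*$ is scale-invariant, which is exactly what lets the perturbation be taken arbitrarily small so that it survives the $\epsilon\to 0^+$ limit in the condition-number definition.
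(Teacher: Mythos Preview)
Your argument is correct. Note, however, that the paper does not itself prove Theorem~\ref{t:perfectstability}: it is stated without proof as a result imported from \cite{BAACHOptimality} (see the opening of Section~\ref{ss:optimal}), so there is no in-paper proof to compare against. Your approach --- choosing the minimizing unit vector $\varphi_*\in\cT_N$ with $\norm{P_M\varphi_*}=C_{N,M}$, using perfectness to force $G_{N,M}(t\varphi_*)=t\varphi_*$ and $G_{N,M}(0)=0$, and reading off the amplification ratio $1/C_{N,M}=\kappa(F_{N,M})$ --- is the natural one, and your handling of the degenerate case $C_{N,M}=0$ is also sound.
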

This result implies the following: for any perfect reconstruction method, one must sample at a rate higher than that of generalized sampling -- namely, the stable sampling rate -- to obtain a stable reconstruction.  In other words, generalized sampling cannot be improved upon in terms of its stability (at least for perfect methods).

The case of non-perfect methods was also studied in \cite{BAACHOptimality}.  The following result was proved:
\begin{theorem}\label{t:optimalitylinearSSR}
Suppose that the stable sampling rate $\Theta(N;\theta)$ is linear in $N$ for a particular sampling and reconstruction problem.  Let $f \in \mathcal{H}$ be fixed, and suppose that there exists a sequence of mappings
$$
G_M : \{ \hat{f}_j \}^{M}_{j=1} \mapsto G_M(f) \in \mathcal{T}_{\Psi_f(M)},
$$
where $\Psi_f : \bbN \rightarrow \bbN$ with $\Psi_f(M) \leq c M$.   Suppose also that there exist constants $c_1(f),c_2(f),\alpha_f > 0$ such that
\begin{equation}
\label{algconv}
c_1(f) N^{-\alpha_f} \leq \| f - Q_N f \| \leq c_2(f) N^{-\alpha_f},\quad \forall N \in \bbN.
\end{equation}
Then, given $\theta \in (1,\infty)$, there exist constants $c(\theta) \in (0,1)$ and $c_f(\theta) > 0$ such that
\begin{equation}
\label{decay}
\| f - F_{c(\theta) M,M}(f) \| \leq c_f(\theta) \| f - G_M(f) \|,\quad \forall M \in \bbN,
\end{equation}
where $F_{N,M}$ is the generalized sampling reconstruction.
\end{theorem}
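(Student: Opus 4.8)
The plan is to play two constraints against one another. On one side, the competitor's output $G_M(f)$ must lie in $\cT_{\Psi_f(M)}$ with $\Psi_f(M)\le cM$, so it cannot beat the best approximation to $f$ from that subspace, which by \eqref{algconv} decays no faster than a multiple of $(cM)^{-\alpha_f}$. On the other side, because the stable sampling rate is linear, generalized sampling run with $N$ proportional to $M$ stays in the regime $M\ge\Theta(N;\theta)$, so $F_{N,M}$ is quasi-optimal and, by \eqref{algconv} again, its error is of order $N^{-\alpha_f}$, hence of order $M^{-\alpha_f}$. Dividing the two estimates will produce a constant $c_f(\theta)$, while the proportionality factor $c(\theta)$ will be read off from the linear bound on $\Theta$.

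First I would bound the competitor from below. Since $Q_{\Psi_f(M)}$ is the orthogonal, hence norm-minimising, projection onto $\cT_{\Psi_f(M)}$ and $G_M(f)\in\cT_{\Psi_f(M)}$, we have $\norm{f-G_M(f)}\ge\norm{f-Q_{\Psi_f(M)}f}$; then \eqref{algconv}, together with $\alpha_f>0$ and $\Psi_f(M)\le cM$, gives
\[
\norm{f-G_M(f)}\;\ge\;c_1(f)\,\Psi_f(M)^{-\alpha_f}\;\ge\;c_1(f)\,(cM)^{-\alpha_f}.
\]
(This invokes \eqref{algconv} only for $N=\Psi_f(M)\ge1$, consistent with $\Psi_f\colon\bbN\to\bbN$.)

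Next I would fix the proportionality factor and bound the generalized sampling error from above. Linearity of the stable sampling rate supplies a constant $S(\theta)\ge1$ with $\Theta(N;\theta)\le S(\theta)N$ for all $N\ge1$. Put $c(\theta):=\min\{1/S(\theta),\tfrac12\}\in(0,1)$ and, for each $M$, set $N:=\fl{c(\theta)M}$. Whenever $N\ge1$ we have $\Theta(N;\theta)\le S(\theta)N\le S(\theta)c(\theta)M\le M$, so $M\ge\Theta(N;\theta)$ and therefore $\mu(F_{N,M})\le\theta$; the definition of the quasi-optimality constant then gives $\norm{f-F_{N,M}(f)}\le\theta\norm{f-Q_Nf}\le\theta c_2(f)N^{-\alpha_f}$. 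Moreover, for $M\ge2/c(\theta)$ we have $N=\fl{c(\theta)M}\ge c(\theta)M-1\ge\tfrac12 c(\theta)M$, so $N^{-\alpha_f}\le(2/c(\theta))^{\alpha_f}M^{-\alpha_f}$. Combining this with the lower bound of the previous step, for every $M\ge2/c(\theta)$,
\[
\frac{\norm{f-F_{\fl{c(\theta)M},M}(f)}}{\norm{f-G_M(f)}}\;\le\;\frac{\theta c_2(f)\,(2/c(\theta))^{\alpha_f}M^{-\alpha_f}}{c_1(f)\,(cM)^{-\alpha_f}}\;=\;\frac{\theta c_2(f)}{c_1(f)}\left(\frac{2c}{c(\theta)}\right)^{\alpha_f}\;=:\;c_f(\theta),
\]
which is exactly \eqref{decay} for these $M$.

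It remains to absorb the finitely many $M<2/c(\theta)$ (among which are all $M$ with $\fl{c(\theta)M}=0$, handled by the convention $F_{0,M}:=0$). For each such $M$, \eqref{algconv} forces $\norm{f-Q_Nf}>0$ for every $N$, so $f\notin\cT_N$ for any $N$ and in particular $G_M(f)\ne f$; hence the ratio on the left of the last display is a finite positive number, and replacing $c_f(\theta)$ by its maximum over these finitely many exceptional $M$ establishes \eqref{decay} for all $M\in\bbN$. I do not anticipate a genuine obstacle in this argument: the single place the hypotheses are indispensable is the choice $N\asymp M$ above, which needs the linearity of $\Theta(N;\theta)$ precisely so that a fixed ratio $N=c(\theta)M$ stays inside the regime $M\ge\Theta(N;\theta)$ where quasi-optimality is guaranteed; the floor estimate and the separate treatment of small $M$ are routine bookkeeping.
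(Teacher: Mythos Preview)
The paper does not actually prove this theorem; it is quoted from \cite{BAACHOptimality} without proof, so there is no in-paper argument to compare against. That said, your proof is correct and follows the natural (and essentially only) route: bound the competitor from below by the best-approximation error in $\cT_{\Psi_f(M)}$ via \eqref{algconv}, bound generalized sampling from above by quasi-optimality (available because linearity of $\Theta$ lets you keep $N\asymp M$ inside the stable regime), and divide. Your handling of the floor and the finitely many small $M$ is careful and correct.
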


This theorem demonstrates that for problems with linear stable sampling rates, even if one is allowed to design a method that depends on $f$ in a completely non-trivial way, it is still not possible to obtain a faster asymptotic rate of convergence than that of generalized sampling.  As we explain in Section \ref{s:main}, the stable sampling rate is linear for wavelets, making this theorem directly applicable.

Observe that a consequence of this theorem is that generalized sampling is, up to a constant, an oracle for the wavelet coefficient reconstruction problem.  Suppose there was some method that, for a particular $f$ satisfying (\ref{algconv}), could recover the first $N=M$ wavelet coefficients of $f$ \textit{exactly} (i.e. with no error) from $M$ Fourier samples.  The conclusion of the above corollary is that generalized sampling commits an error that is at worst a constant factor larger than that of this method.

\subsection{The wavelet reconstruction and Fourier sampling spaces}\label{sec:setup}
In the remainder of this paper we focus on the problem of recovering wavelet coefficients from Fourier samples.  To this end, we now specify the corresponding sampling space $\mathcal{S}$, with its corresponding sampling vectors $\{s_j\}_{j\in \mathbb{N}}$, as well as the reconstruction space $\mathcal{T}$ with the reconstruction vectors $\{\varphi_j\}_{j\in\mathbb{N}}$.  Throughout we let $\mathcal{H} = L^2(\mathbb{R})$ with its usual inner product and will consider the recovery of functions in $\cH$ that are compactly supported on $[0,a]$ for some $a\geq1$.

\subsubsection{The wavelet reconstruction space}\label{sec:setup:recostruction}
The results of this paper are for the case where the reconstruction space is generated by compactly supported Multiresolution Analysis (MRA) wavelets in the sense that the wavelet $\psi$ is associated with an MRA generated by the scaling function $\phi$ such that the following holds:
\begin{enumerate}
\item[(i)] $\br{V_j: j\in\bbZ}$ is a system of nested closed subspaces of $L^2(\bbR)$ with  $V_j \subset V_{j+1}$,
\item[(ii)] $f\in V_j$ if and only if $f(2\cdot ) \in V_{j+1}$,
\item[(iii)] $\bigcap_{j\in\bbZ} V_j = \br{0}$,
\item[(iv)] $\overline{\bigcup_{j\in\bbZ} V_{j}}  = L^2(\bbR)$,
\item[(v)] $\br{\phi(\cdot -k): k\in\bbZ}$ is an orthonormal basis for $V_0$.
\end{enumerate}
However, note that there is a more general notion of an MRA, where condition (v) is replaced with
\begin{enumerate}
\item[(v') ]$\br{\phi(\cdot -k): k\in\bbZ}$ is a Riesz basis for $V_0$.
\end{enumerate}  Furthermore, it can be shown that this weaker notion is equivalent to the assumption of orthonomality \cite[p.44]{eugenio1996first}. Consequently, all the results of this paper can be generalized to compactly supported MRA wavelet systems with the Riesz basis assumption only. In particular, this would include spline wavelets such as the semi-orthogonal wavelets \cite{chui1992compactly, unser1992asymptotic} and the Cohen-Daubechies-Feauveau bi-orthogonal wavelets \cite{cohen2006biorthogonal}.

Suppose now that the reconstruction space $\mathcal{T}$ is generated by a mother wavelet $\psi$ and a scaling function $\phi$ such that $\mathrm{supp}(\psi)=\mathrm{supp}(\phi)=[0,a]$.  Then the only wavelets of interest are those whose support intersects $[0,a]$. In particular, for 
\begin{align*}
 \phi_{j,k} &= 2^{j/2}\phi(2^j\cdot -k), \qquad j,k\in\mathbb{Z},\\
\psi_{j,k} &= 2^{j/2}\psi(2^j\cdot -k), \qquad j,k\in\mathbb{Z},
\end{align*}
the wavelets of interest are
\begin{align*}
\Omega_a = \br{\phi_{0,k} : \abs{k} = 0,1,\ldots,\left\lceil a\right\rceil -1}\cup\br{\psi_{j,k} : j\in\mathbb{Z}+, k\in \mathbb{Z},-\left\lceil a\right\rceil +1\leq k \leq 2^j \left\lceil a\right\rceil -1}.
\end{align*}
So, 
$$
\mathcal{T} =\overline{\mathrm{span}\br{\varphi :\varphi\in\Omega_a}},
$$
and for sufficiently large $T_1$ and $T_2$, namely, $T_1\geq \left\lceil a\right\rceil -1$ and $T_2\geq 2\left\lceil a\right\rceil -1$:
\begin{align}\label{eq:Omega_a}
L^2[0,a]\subset \mathcal{T}\subset L^2[-T_1,T_2].
\end{align}
Elements of $\Omega_a$ are ordered as follows:
\begin{equation}\label{ordering}
\{\varphi_j\}_{j\in\mathbb{N}} = \{\phi_{0,-\left\lceil a\right\rceil+1}, \phi_{0,-\left\lceil a\right\rceil+2},\ldots, \phi_{0,\left\lceil a\right\rceil-1}, \psi_{0,-\left\lceil a\right\rceil+1},\ldots, \psi_{0,\left\lceil a\right\rceil-1}, \psi_{1,-\left\lceil a\right\rceil+1}, \ldots, \psi_{1,2^j \left\lceil a\right\rceil -1},\ldots\},
\end{equation}
and thus 
\begin{equation}\label{ordering2}
\mathcal{T}_{N} = \mathrm{span}\br{\varphi_j : j=1,\ldots,N}.
\end{equation}
Although one can in principle consider arbitrary values of $N \in \bbN$, it is natural instead to consider only those $N$ for which $\mathcal{T}_N$ contains all wavelets up to a certain scale.  To this end, we now write
\begin{equation}\label{NRdef}
N_R = 2^R \left\lceil a\right\rceil + (R+1)(\left\lceil a\right\rceil -1),\quad R \in \bbN.
\end{equation}
We will verify in Lemma \ref{lem:decomposition} that the subspace $\mathcal{T}_{N_R}$ consists of all wavelets $\psi_{j,k}$ of scale $0 \leq j \leq R-1$.

\subsubsection{The Fourier sampling space}
Given $[-T_1,T_2]$ (the support of $\cT$), we let $\epsilon \leq 1/(T_1+T_2)$ be the \textit{sampling density} (or \textit{sampling distance}).  Note that via the Nyquist-Shannon sampling theorem, $1/(T_1+T_2)$ is the corresponding Nyquist criterion for functions supported on $[-T_1,T_2]$. We now define the sampling vectors by
$$
s_l^\epsilon = \sqrt{\epsilon}e^{2\pi i l \epsilon \cdot}\chi_{[-T_1/(\epsilon(T_1 +T_2)),T_2/(\epsilon(T_1 +T_2))]},
$$
and the sampling space by 
\begin{align*} 
\mathcal{S}^\epsilon= \overline{\mathrm{span}\br{s_l^\epsilon : l\in\mathbb{Z}}} = \left \{ f \in L^2(\mathbb{R}) : \mathrm{supp}(f) \subseteq [-T_1/(\epsilon(T_1 +T_2)),T_2/(\epsilon(T_1 +T_2))] \right \},
\end{align*}
and the space spanned by the first $M$ sampling vectors by
\begin{align}\label{eq:S_m}
\mathcal{S}_{M}^\epsilon = \mathrm{span}\br{s_l^\epsilon :-\left\lfloor\dfrac{M}{2}\right\rfloor \leq l\leq \left\lceil\dfrac{M}{2}\right\rceil -1}.
\end{align}
Moreover, $P^\epsilon$ and $P^\epsilon_M$ will denote the orthogonal projections from $\cH$ onto $\cS^\epsilon$ and $\cS^\epsilon_M$ respectively.
Where there is no ambiguity about the value of the sampling density, we will drop the $\epsilon$ notation and simply write $\cS$, $\cS_M$, $P$ and $P_M$ instead.

\begin{remark}
Observe that for all $\epsilon\leq 1/(T_1+T_2)$, $\cT\subset\cS^\epsilon$. So, $\cT+(\cS^\epsilon)^\perp$ is a closed subspace of $\cH$ and $\cT\cap(\cS^\epsilon)^\perp=\br{0}$. Thus, the subspace condition (\ref{subspacecondition}) of generalized sampling is satisfied.
\end{remark}

\section{Examples}\label{s:examples}
In this section, we present two examples to illustrate the use generalized sampling in practice and its advantage is clear from the figures corresponding to our examples.
Recalling the summary of generalized sampling in Section \ref{s:GS}, an effective reconstruction requires choosing the correct ratio between $M$, the number of samples  and $N$, the number of reconstruction vectors to be approximated. In particular, the choice of $M$ and $N$ is in accordance with the stable sampling rate and  the generalized sampling reconstruction is the unique least squares solution to (\ref{least_sq_formulation}) with representation (\ref{eq:gs_exact_rep}).

\begin{figure}
\centering
$\begin{array}{ccc}
\includegraphics[ trim=0.8cm 0.2cm 0.8cm 0.2cm,clip=true,width=0.3\textwidth]{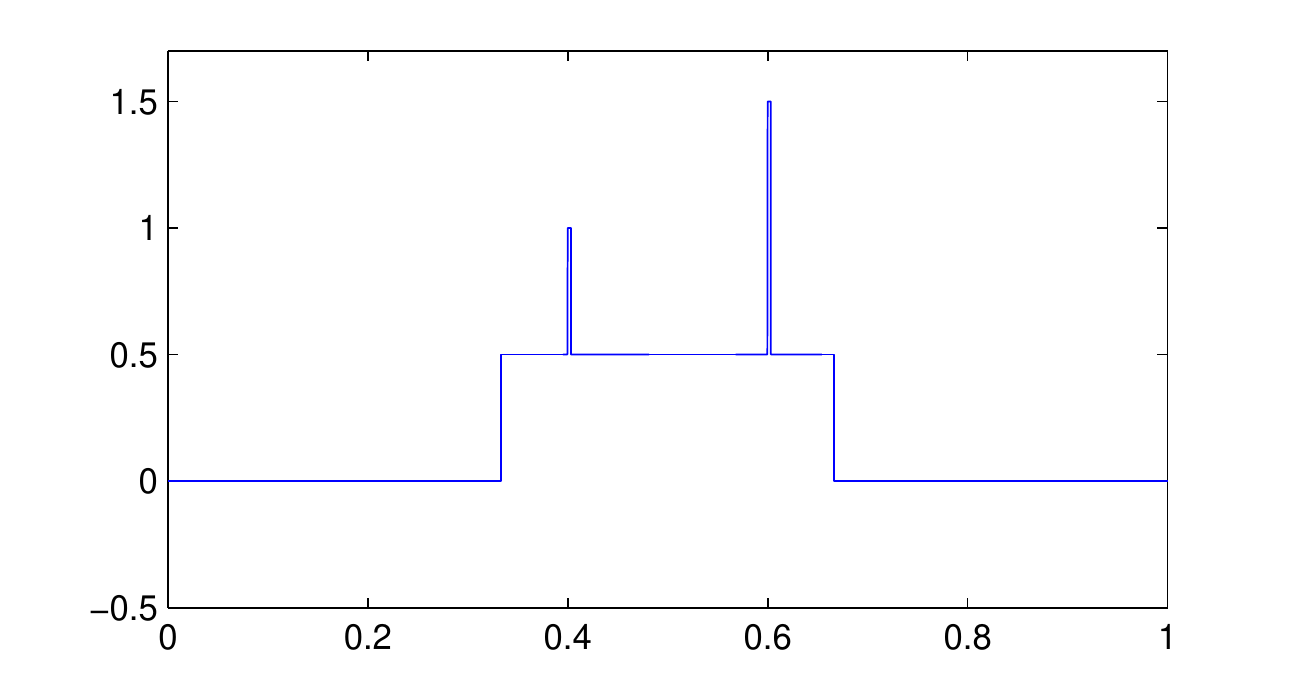} &
\includegraphics[ trim=0.8cm 0.2cm 1cm 0.2cm,clip=true,width=0.3\textwidth]{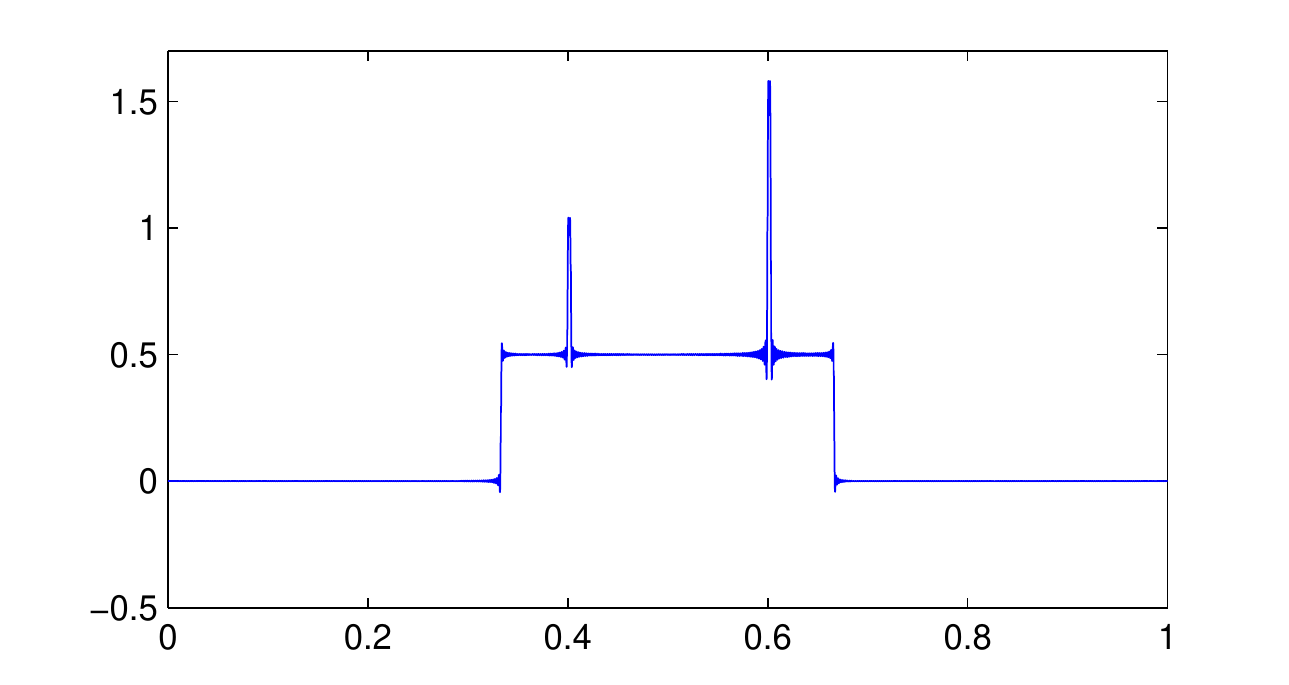} &
\includegraphics[ trim=0.8cm 0.2cm 1cm 0.2cm,clip=true,width=0.3\textwidth]{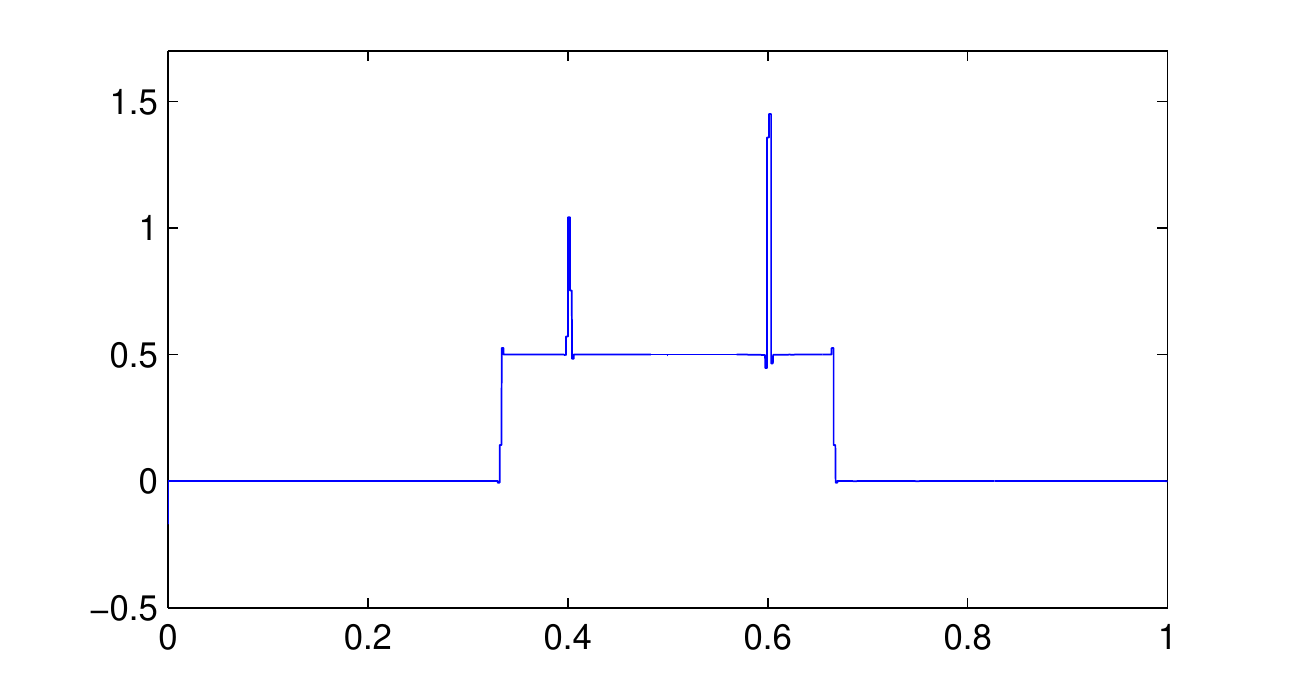} \\
\includegraphics[ trim=0.8cm 0.2cm 0.8cm 0.2cm,clip=true,width=0.3\textwidth]{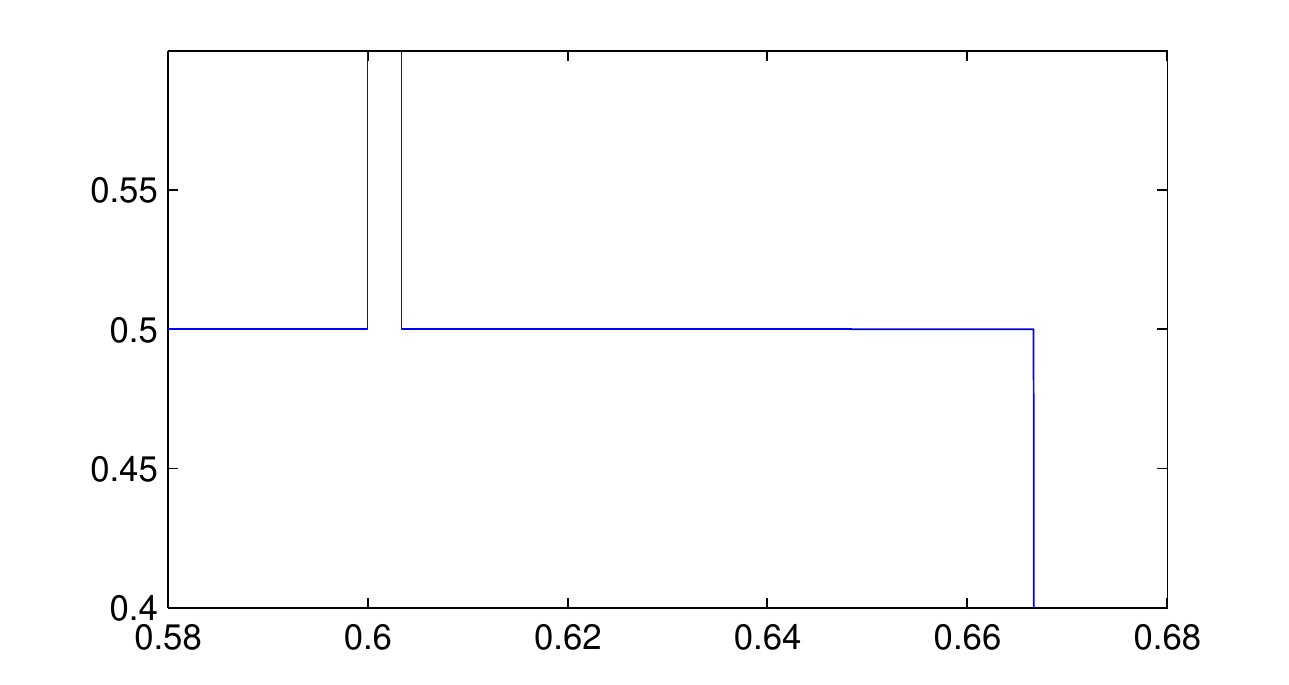} &
\includegraphics[ trim=0.8cm 0.2cm 0.8cm 0.2cm,clip=true,width=0.3\textwidth]{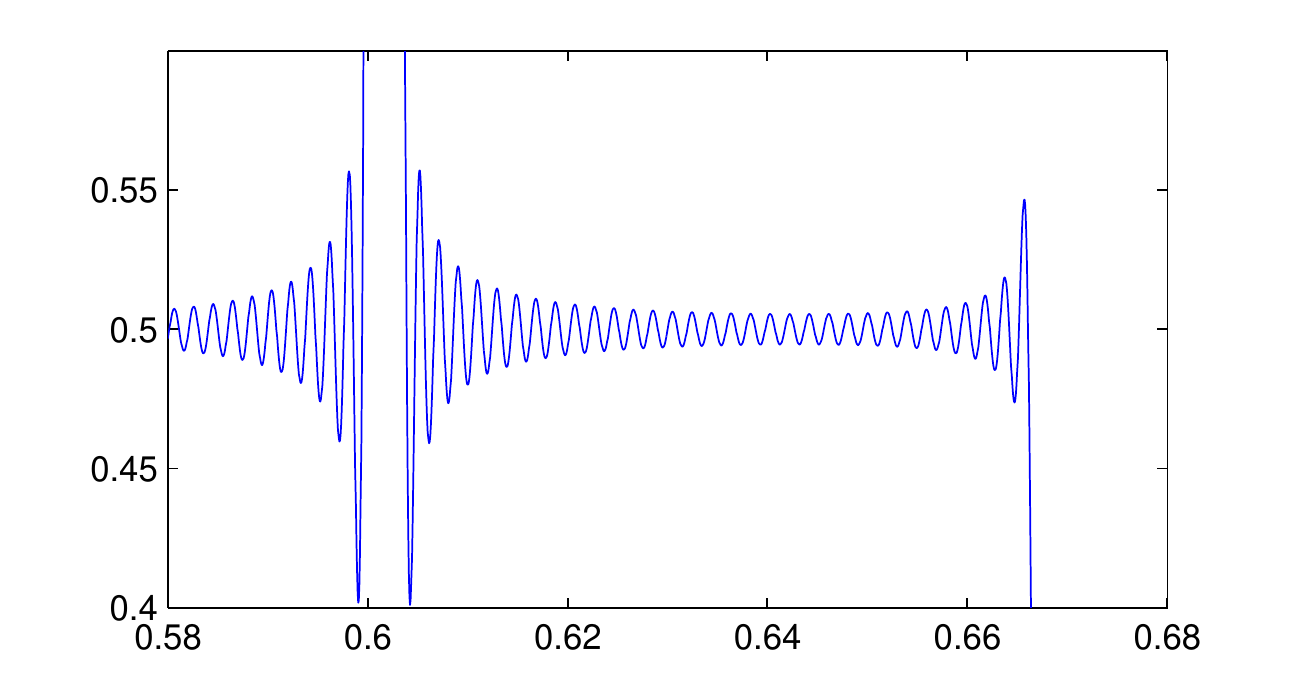} &
\includegraphics[ trim=0.8cm 0.2cm 0.8cm 0.2cm,clip=true,width=0.3\textwidth]{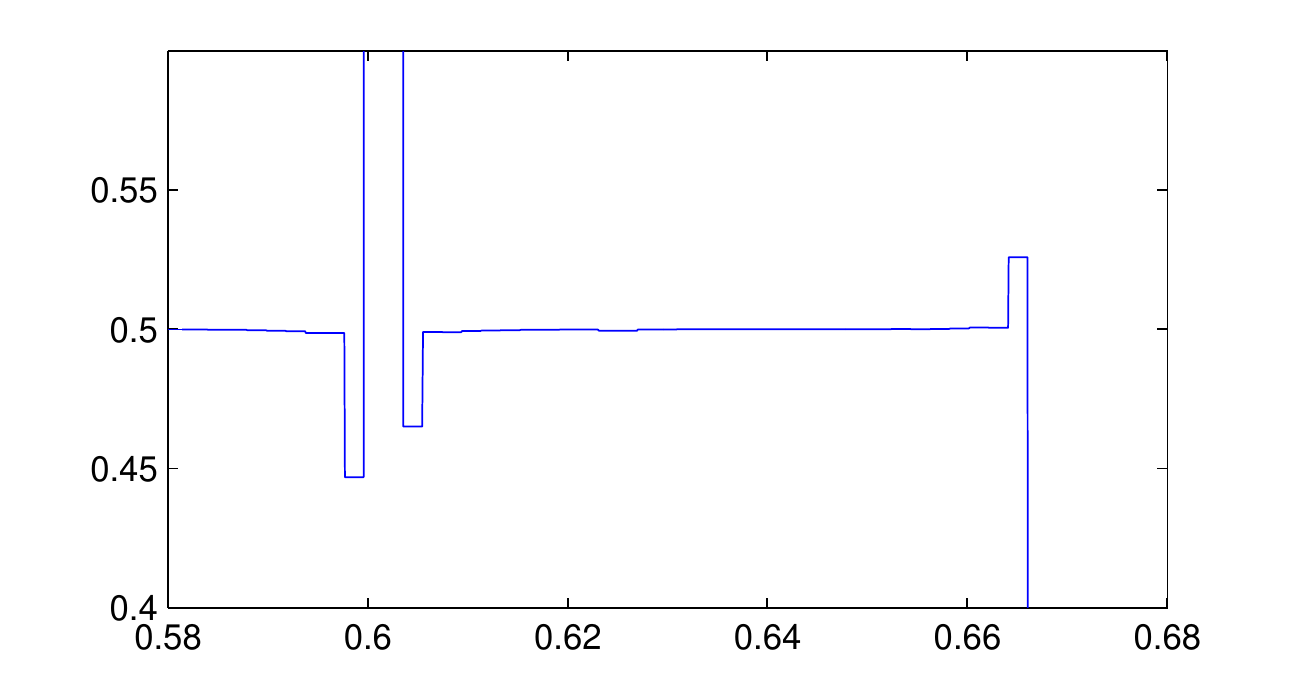}
\end{array}$
\caption{The top row shows $f$ (left), $f_M$ (middle) and $f^{[N,M]}$ (right). The bottom row shows $f$ (left), $f_M$ (middle) and $f^{[N,M]}$ (right) on the interval $[0.58, 0.68]$. 
\label{fig:comparison}}
\end{figure}

\begin{figure}
\centering
$\begin{array}{ccc}
\includegraphics[ trim=0.8cm 0.2cm 0.8cm 0.2cm,clip=true, width=0.3\textwidth]{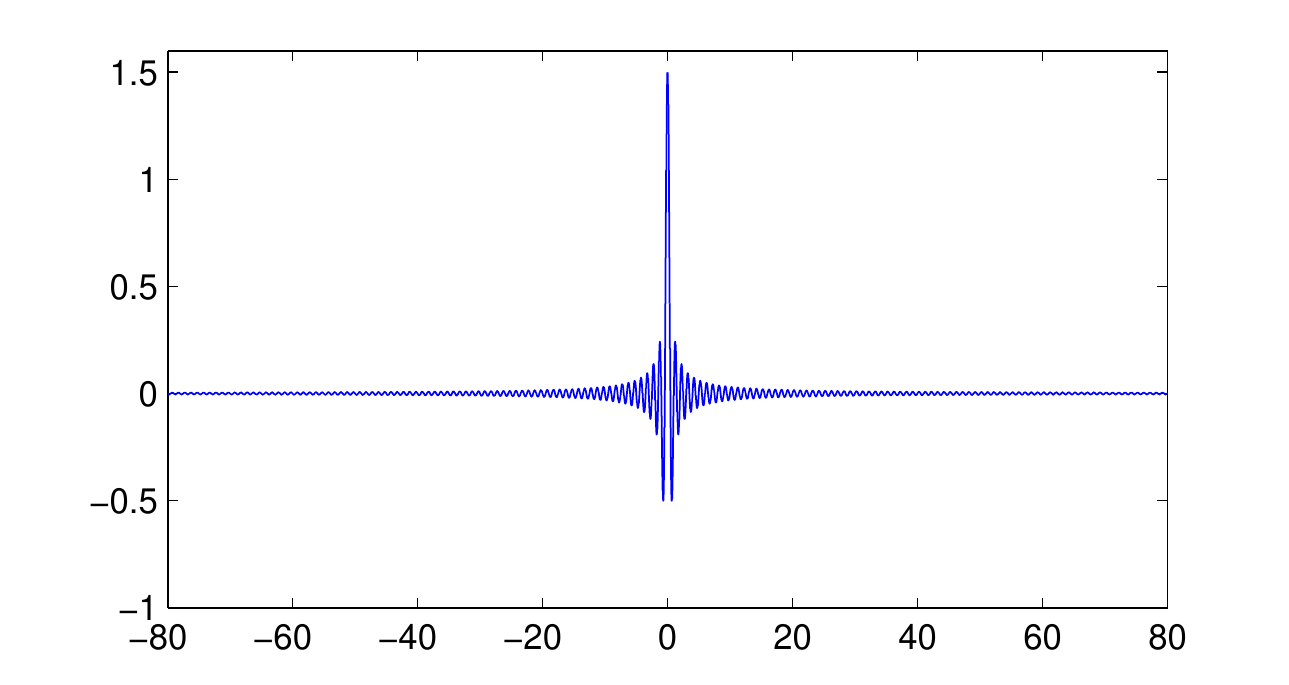} &
\includegraphics[  trim=0.8cm 0.2cm 0.8cm 0.2cm,clip=true, width=0.3\textwidth]{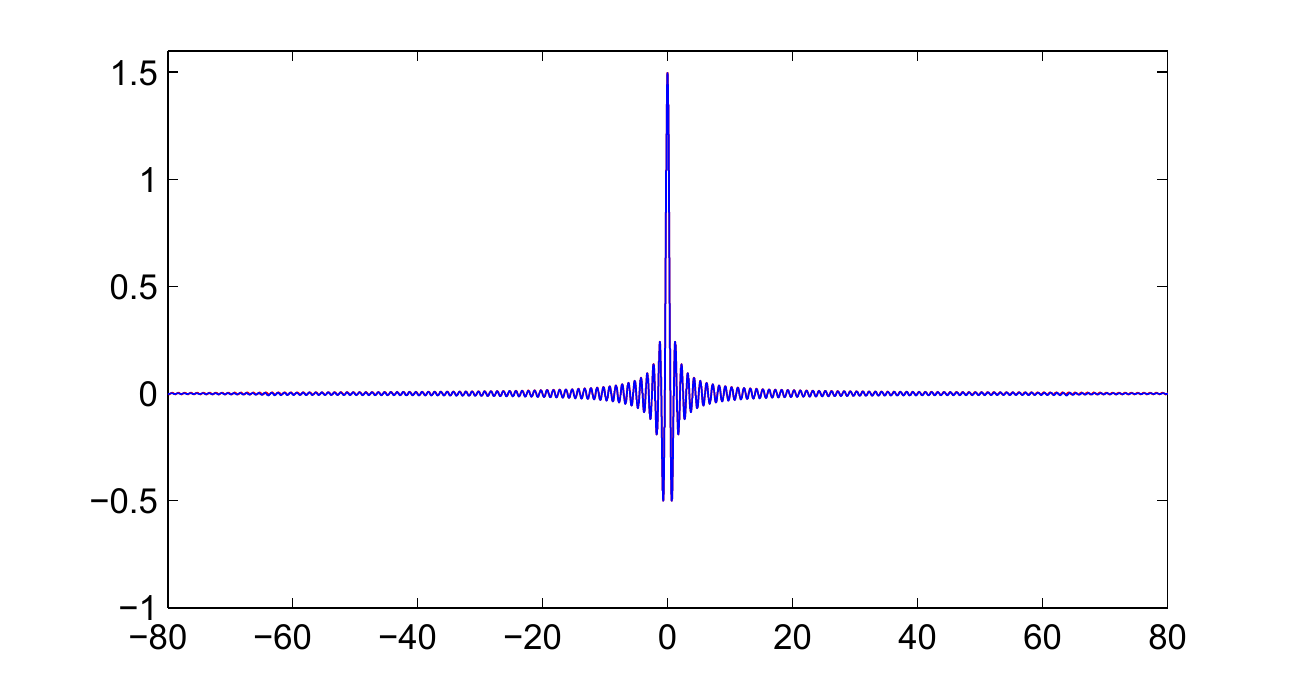} &
\includegraphics[ trim=0.8cm 0.2cm 0.8cm 0.2cm,clip=true,   width=0.3\textwidth]{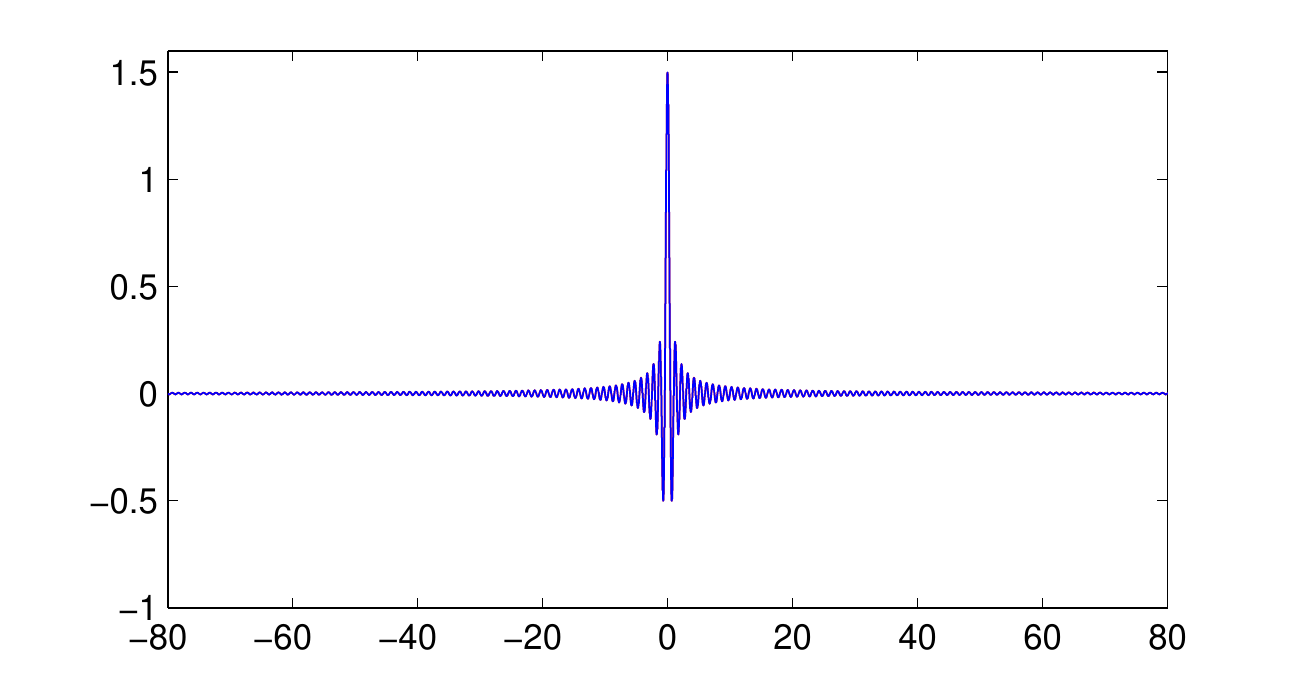} \\
\includegraphics[  trim=0.8cm 0.2cm 0.8cm 0.2cm,clip=true, width=0.3\textwidth]{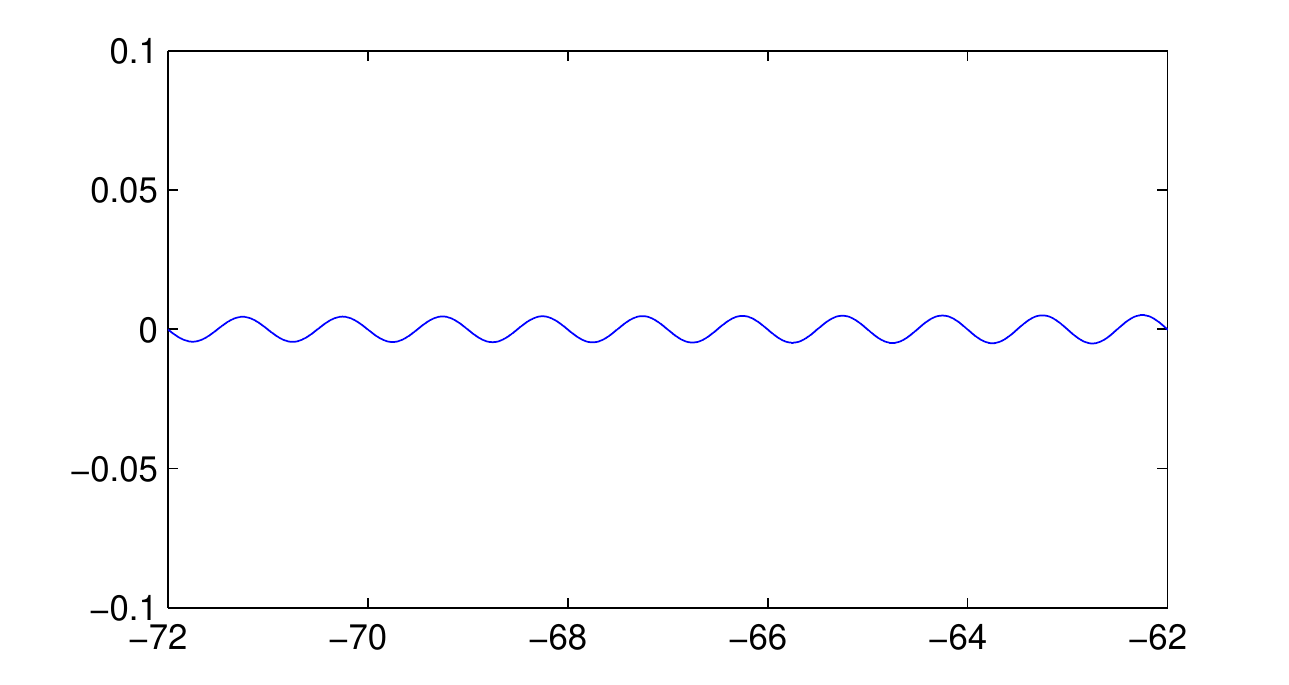} &
\includegraphics[  trim=0.8cm 0.2cm 0.8cm 0.2cm,clip=true, width=0.3\textwidth]{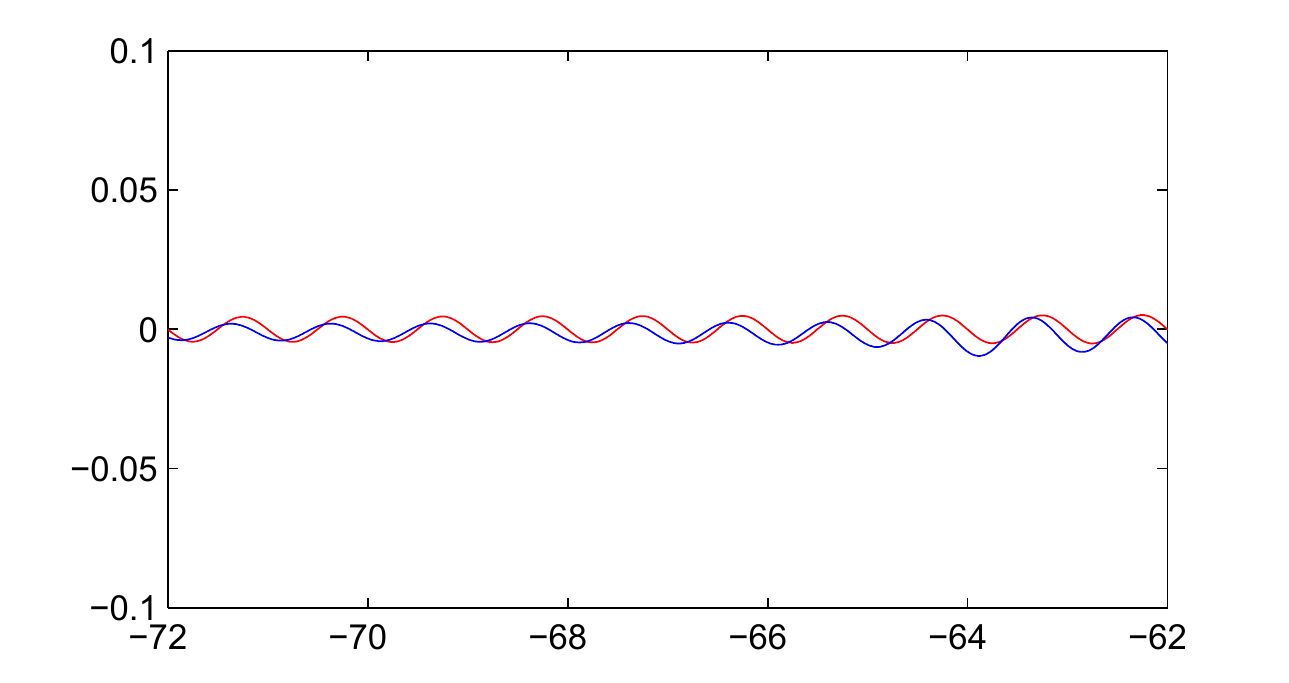} &
\includegraphics[  trim=0.8cm 0.2cm 0.8cm 0.2cm,clip=true, width=0.3\textwidth]{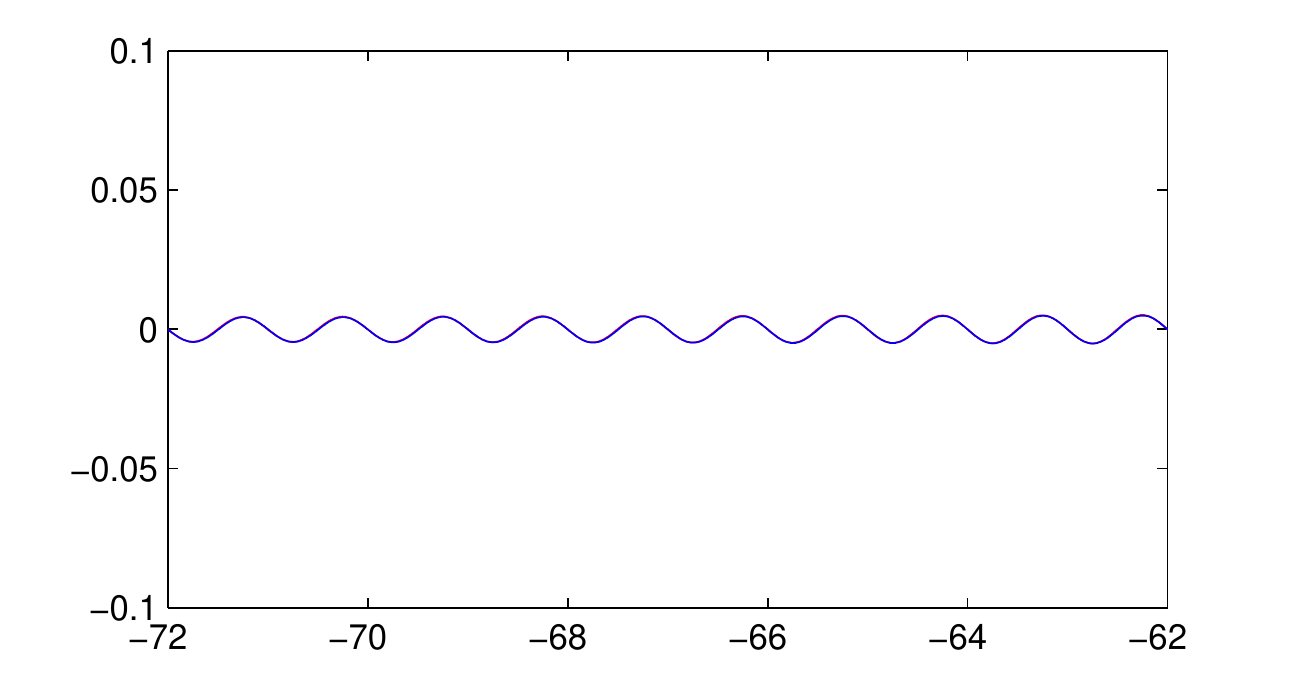} 
\end{array}$
\caption{The top row shows $f$ (left), $f_M$ in blue and $f$ in red (middle) and $f^{[N,M]}$  in blue and $f$  in red  (right). The bottom row shows $f$ (left),  $f_M$ in blue and $f$ in red (middle) and $f^{[N,M]}$  in blue and $f$  in red (right) on the interval $[-72, -62]$.
\label{fig:comparison_sinc}}
\end{figure}

\begin{enumerate}
\item \textbf{Reconstruction using Haar wavelets:}
Let us first consider the reconstruction of the function
$$
f(x) = \frac{1}{2}\chi_{[1/3, 2/3]} + \frac{1}{2} \chi_{[2/5,2/5+1/300]} +  \chi_{[3/5,3/5+1/300]}
$$
from the finite vector of measurements
$$
\mathcal{F}_M = \left(\hat{f}(-\pi M/2), \hat{f}(-\pi (M/2-1) ), \hdots, \hat f(\pi (M/2-1)), \hat f(\pi M/2 ) \right), \quad M=2048.
$$
By the Nyquist-Shannon Sampling Theorem, we may directly approximate $f$ by its truncated Fourier series $f_N$ as follows
$$
 f_M =  \frac{1}{2} \sum_{n=-M/2}^{M/2} \hat f(\pi n ) e^{\pi i n  \cdot}.
$$
In this example, we contrast the generalized sampling reconstruction in Haar wavelets with the direct approximation $f_M$ from the \emph{same} measurements $\mathcal F_M$. 
Recall that the Haar scaling function and wavelet are defined by
\begin{align*}
\phi = \chi_{[0,1)}, \qquad \psi = \chi_{[0,1/2)} - \chi_{[1/2,1)}
\end{align*} 
and applying the construction of Section \ref{sec:setup}, the reconstruction space is $\cT =\overline{\mathrm{span} \br{\varphi\in\Omega_1}}$ where $$
\Omega_1 = \br{\phi} \cup \br{\psi_{j,k}: j\in\mathbb{Z}_+, 0\leq k\leq 2^j-1}.
$$
By implementing generalized sampling with $N=512$ such that $\Theta(N; 1.2)\leq 2048$, the generalized sampling reconstruction $f^{[N,M]}$ is,  up to a factor of $1.2$, the best approximation from the first $512$ wavelets.
 The exact function $f$, the truncate Fourier series approximation $f_M$ and the generalized sampling solution $f^{[N,M]}$ are displayed in Figure \ref{fig:comparison}. We remark that similar figures were generated in \cite{WeaverEtAlWaveletEncoding} to justify the use of wavelet encoding for MRI and in proving that the stable sampling rate is linear, we show that that wavelet coefficients can be accurately approximated via post-processing and there is little to be gained in modifying the sampling process.

\item \textbf{Reconstruction of a bandlimited function:}
Consider $$f(x)= \frac{e^{-ix}+x(2 i e^{-ix}-1)}{x^2},$$ then $\hat f(x) = (x+1)\chi_{[0,1]}(x)$ and by the Nyquist-Shannon Sampling Theorem, we may approximate $f$ directly from its pointwise samples
$$\mathcal{F}_{M} = \left({f}(-\pi M/2 ), {f}(-\pi (M/2-1) ), \hdots,  f(\pi (M/2-1)),  f(\pi M/2 ) \right), \quad M=512
$$
with
$$
 f_M =  \sum_{n=-M/2}^{M/2} f(\pi n ) \mathrm{sinc}(\cdot + \pi  n ).
$$
By applying the generalized sampling scheme with $N=128$, we obtain a Haar wavelet reconstruction of $\hat f$ which we denote by $\hat f^{[N,M]}$. Then, taking the inverse Fourier transform of $\hat f^{[N,M]} $ gives the  reconstruction $f^{[N,M]}$. The graphs of $f$, $f_M$ and $f^{[N,M]}$ are displayed in Figure \ref{fig:comparison_sinc}.
\end{enumerate}

\section{Main results}\label{s:main}
We now state the main results of this paper.  Proofs are provided in Sections \ref{s:prf1} and \ref{s:prf2}.

\subsection{Linearity of the stable sampling rate}
The first result of this paper is that the stable sampling rate for wavelet reconstructions from Fourier samples is linear for any compactly supported MRA wavelet basis.  In other words, up to a constant factor there is a one-to-one correspondence between Fourier samples and wavelet coefficients.  In particular, all information about a function that can be retrieved from its wavelet coefficients can still be retrieved even in the situation where only Fourier samples are available (and not the wavelet coefficients themselves).

More formally, we have the following theorem:
\begin{theorem}\label{them;main}
Let $\mathcal{S}$ and $\mathcal{T}$ be the sampling and reconstruction spaces defined in Section \ref{sec:setup} and recall $N_R$ from (\ref{NRdef}). Let $N\leq N_R$ for $R\in\mathbb{N}$.  Then for all $\theta \in (1,\infty)$ there exists $S_{\theta}\in\mathbb{N}$, independent of $R$, such that for 
$
M=\left\lceil\dfrac{S_{\theta}2^{R+1}}{\epsilon}\right\rceil,
$
we have 
$
C_{N,M} \geq \dfrac{1}{\theta}.
$
In particular, 
$
\Theta(N; \theta)\leq \left\lceil\dfrac{2S_{\theta} N}{\epsilon\lceil a\rceil}\right\rceil$. Hence, $\Theta(N;\theta) = \mathcal{O}(N)$ for any $\theta \in (1,\infty)$.
\end{theorem}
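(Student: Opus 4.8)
The plan is to estimate the subspace cosine $C_{N,M}$ directly and to reduce the whole statement to a single tail estimate for Fourier coefficients. Since $N\le N_R$ we have $\cT_N\subseteq\cT_{N_R}$, hence $C_{N,M}\ge C_{N_R,M}$, so it suffices to bound $C_{N_R,M}$ from below. As $\cT_{N_R}\subseteq\cT\subseteq\cS^{\epsilon}$, every unit vector $\varphi\in\cT_{N_R}$ lies in $\cS^{\epsilon}$, so writing $I_M=\br{l\in\bbZ:-\fl{M/2}\le l\le\cl{M/2}-1}$ for the index set in \eqref{eq:S_m}, Parseval in the orthonormal basis $\br{s_l^{\epsilon}}$ gives
\[
\norm{P^{\epsilon}_M\varphi}^{2}=1-\sum_{l\notin I_M}\abs{\ip{\varphi}{s_l^{\epsilon}}}^{2},
\]
so that
\[
C_{N_R,M}^{2}=1-\sup_{\substack{\varphi\in\cT_{N_R}\\\norm{\varphi}=1}}\ \sum_{l\notin I_M}\abs{\ip{\varphi}{s_l^{\epsilon}}}^{2}.
\]
The theorem therefore reduces to showing that, for $M=\cl{S_{\theta}2^{R+1}/\epsilon}$ with $S_{\theta}$ large but \emph{independent of $R$}, this supremum is at most $1-\theta^{-2}$.

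To exploit the wavelet structure I would invoke Lemma~\ref{lem:decomposition}, which identifies $\cT_{N_R}$ with the span of all scaling and wavelet functions up to scale $R-1$; by the refinement relations of the multiresolution analysis this space is contained in the span of the level-$R$ scaling functions $\phi_{R,k}=2^{R/2}\phi(2^{R}\cdot-k)$ whose support meets $[0,a]$, with $k$ running over a block of $\mathcal O(2^{R}\cl a)$ consecutive integers and $\br{\phi_{R,k}}_k$ orthonormal. Writing such a $\varphi$ as $\varphi=\sum_k c_k\phi_{R,k}$ with $\sum_k\abs{c_k}^{2}=1$, and noting that $\mathrm{supp}(\phi_{R,k})\subseteq[-T_1,T_2]$ for all the relevant $k$ (so the truncation in the definition of $s_l^{\epsilon}$ is inactive once $\epsilon\le 1/(T_1+T_2)$), a change of variables yields the exact identity
\[
\abs{\ip{\varphi}{s_l^{\epsilon}}}^{2}=\epsilon\,2^{-R}\,\abs{\hat\phi(2\pi l\epsilon2^{-R})}^{2}\,\abs{m_c(2\pi l\epsilon2^{-R})}^{2},
\]
where $\hat\phi$ is the Fourier transform of $\phi$ and $m_c(\xi)=\sum_k c_k e^{-ik\xi}$ is $2\pi$-periodic with $\frac{1}{2\pi}\int_0^{2\pi}\abs{m_c}^{2}=1$. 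The purpose of the scaling $M\asymp 2^{R+1}/\epsilon$ is now visible: after the substitution $t=l\epsilon2^{-R}$ the excluded set $\br{l\notin I_M}$ becomes essentially $\br{\abs t\ge S_{\theta}}$ on the grid $\epsilon2^{-R}\bbZ$, which is scale-free.

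It remains to bound $\epsilon2^{-R}\sum_{\abs t\ge S}\abs{\hat\phi(2\pi t)}^{2}\abs{m_c(2\pi t)}^{2}$, the sum over $t\in\epsilon2^{-R}\bbZ$, uniformly in $R$ and over all $c$ with $\norm c_2=1$. I would split the grid into unit blocks $[n,n+1)$ with $\abs n\ge S$, use the $1$-periodicity of $t\mapsto\abs{m_c(2\pi t)}^{2}$, and apply a quadrature estimate block by block: the Nyquist condition $\epsilon\le 1/(T_1+T_2)$ guarantees that the grid of spacing $\epsilon2^{-R}$ is fine relative to the degree $\mathcal O(2^{R}\cl a)$ of $m_c$, so that $\epsilon2^{-R}\sum_{t\in[n,n+1)}\abs{m_c(2\pi t)}^{2}$ is bounded by a fixed multiple of $\int_0^1\abs{m_c(2\pi t)}^{2}\,dt=1$. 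This bounds the contribution of block $n$ by a constant times $\sup_{t\in[n,n+1)}\abs{\hat\phi(2\pi t)}^{2}$, and summing over $\abs n\ge S$ together with the Fourier decay of $\hat\phi$ produces a bound $E(S)$ with $E(S)\to 0$ as $S\to\infty$ and \emph{no dependence on $R$}. Choosing $S_{\theta}$ with $E(S_{\theta})\le 1-\theta^{-2}$, and treating the finitely many small scales $R$ separately (where $C_{N,M}\to 1$ as $M\to\infty$ for fixed $N$), gives $C_{N,M}\ge 1/\theta$ for $M=\cl{S_{\theta}2^{R+1}/\epsilon}$. The bound on $\Theta(N;\theta)$ is then bookkeeping: by \eqref{NRdef}, $2^{R}\cl a\le N_R$, so for $N=N_R$ one gets $\Theta(N;\theta)\le\cl{S_{\theta}2^{R+1}/\epsilon}\le\cl{2S_{\theta}N/(\epsilon\cl a)}$, and monotonicity of $\Theta$ in $N$ together with $N_R=\mathcal O(N)$ disposes of the intermediate $N$, giving $\Theta(N;\theta)=\mathcal O(N)$ for every $\theta\in(1,\infty)$.

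The step I expect to be the main obstacle is the uniform-in-$R$ tail estimate. The rescaling $M\asymp 2^{R+1}/\epsilon$ is exactly what makes the cutoff scale-free, but turning this into an honest bound requires two ingredients working together: the Nyquist hypothesis, which keeps the per-block quadrature of $\abs{m_c}^{2}$ under control even though the degree of $m_c$ grows like $2^{R}$; and a quantitative Fourier-decay estimate for the compactly supported scaling function --- for instance $\abs{\hat\phi(\xi)}\le C(1+\abs\xi)^{-1/2-\delta}$ for some $\delta>0$, or at least the summability of $\sum_n\sup_{t\in[n,n+1)}\abs{\hat\phi(2\pi t)}^{2}$ --- which I would establish beforehand as a preliminary lemma (compactly supported scaling functions possess some Fourier decay of this type). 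The reduction to $\cT_{N_R}$, the Fourier identity, and the passage from $C_{N,M}$ to $\Theta(N;\theta)$ are all routine once these are in place.
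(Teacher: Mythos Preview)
Your outline is sound and shares its skeleton with the paper's proof: both reduce to $\cT_{N_R}$, invoke Lemma~\ref{lem:decomposition} to write $\varphi=\sum_k\alpha_k\phi_{R,k}$, and use the identity of Proposition~\ref{prop:formula} to factor $\abs{\ip{\varphi}{s_l^{\epsilon}}}^2$ as a product involving $\hat\phi$ and a trigonometric polynomial in the coefficients. The difference lies in how the resulting sum is estimated.

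The paper does \emph{not} bound the tail; it bounds $\norm{P_M^{\epsilon}\varphi}^2$ from below directly. The key device is Proposition~\ref{prop:sampling_rate}: one first proves the estimate for the particular density $\epsilon_0=1/(T_1+T_2+\lceil a\rceil)$, for which $L=2^R/\epsilon_0\in\bbN$ and $L>A_{R,2}-A_{R,1}+1$, and then transfers to arbitrary $\epsilon\le 1/(T_1+T_2)$. At this special $\epsilon_0$ the grid $\epsilon_0 2^{-R}\bbZ$ is commensurate with the period of $\Phi$, so after grouping the sum into blocks of length $L$ one can apply the \emph{exact} DFT identity of Lemma~\ref{lem:DFT} to the $\Phi$-factor and the MRA orthonormality identity $\sum_{l}\abs{\hat\phi(\xi+2\pi l)}^2=1$ (via Lemma~\ref{lem:wavelet_property}) to the $\hat\phi$-factor. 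No decay assumption on $\hat\phi$ and no approximate quadrature are needed: the estimate is a consequence of the MRA axioms alone.

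Your route avoids the transference lemma at the cost of two extra ingredients. The summability of $\sum_n\sup_{[n,n+1)}\abs{\hat\phi(2\pi t)}^2$ does hold (since $\phi$ compactly supported in $L^2$ forces $\hat\phi\in H^1(\bbR)$), but the pointwise bound $\abs{\hat\phi(\xi)}\le C(1+\abs{\xi})^{-1/2-\delta}$ is not an MRA consequence in general, so you should rely only on the $H^1$ argument. More delicately, the degree of your polynomial $m_c$ is $A_{R,2}-A_{R,1}=2^R(3\lceil a\rceil-2)+\lceil a\rceil-2$, not $\mathcal{O}(2^R\lceil a\rceil)$ with implicit constant $1$; with the minimal choice $T_1+T_2=3\lceil a\rceil-2$ and $\epsilon=1/(T_1+T_2)$ the per-block grid has only $\approx 2^R(3\lceil a\rceil-2)$ points, so you are \emph{at} (indeed slightly below) Nyquist for $\abs{m_c}^2$, and a naive Riemann--sum bound is not automatic. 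This can be repaired (e.g.\ by the aliasing estimate $\tfrac{1}{L}\sum_j\abs{\Phi(j/L)}^2\le K\sum\abs{\alpha_k}^2$ with $K$ the number of aliased copies, or by using Lemma~\ref{lem:decomposition}(ii) to show the offending high-index coefficients are $\mathcal{O}(2^{-R/2})$), but it is exactly the wrinkle the paper sidesteps by first passing to the smaller $\epsilon_0$.

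In short: your approach is a genuine alternative that trades the transference step for a Fourier-decay lemma and a sharper quadrature argument; the paper's approach is more self-contained because it uses only the MRA identity $\sum_l\abs{\hat\phi(\cdot+2\pi l)}^2\equiv 1$ and the exact DFT, both of which hold without any smoothness beyond the standing hypotheses.
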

Since the stable sampling rate is linear for wavelets, it makes sense to introduce the notion of a \textit{stable sampling ratio}.  We define
\begin{equation}\label{eq;sampling_ratio}
\eta(\theta) = \limsup_{N \rightarrow \infty} \frac{\Theta(N;\theta)}{N},\qquad \theta \in (1,\infty).
\end{equation}
Note the difference between $\Theta(N;\theta)$, which determines how many samples are required for each $N$, and $\eta(\theta)$, which stipulates asymptotically how many are required as $N \rightarrow \infty$.  We will also discuss sampling ratios in the context of other methods.  To this end, suppose that an arbitrary method $G$ uses $\Theta_G(N) \in \bbN$ samples to reconstruct the first $N$ wavelet coefficients.  We define the sampling ratio for that method as
$$
\eta_G = \limsup_{N \rightarrow \infty} \frac{\Theta(N)}{N}.
$$
Since the stable sampling rate is linear, we shall only consider methods $G$ for which $\eta_G$ is defined (all other methods necessarily give worse reconstructions asymptotically as $N \rightarrow \infty$).

\subsection{Universality of the stable sampling rate}

The second collection of results concerns the universality of the stable sampling rate, or equivalently, the optimality of generalized sampling amongst all methods which recover $N$ wavelet coefficients from $M \geq N$ Fourier samples.  Our first result is simply a corollary of Theorem \ref{t:perfectstability} for the wavelet reconstruction problem from Fourier samples:

\begin{corollary}
\label{c:universal}
For $N \in \bbN$, let $G_N$ be a sequence of perfect reconstruction methods with sampling ratio $\eta_G \geq 1$.  If $\eta_G$ is such that $\kappa(G_{N}) \leq \theta$ for some $\theta \in (1,\infty)$ and all sufficiently large $N$, then $\eta_G \geq \eta(\theta)$, where $\eta(\theta)$ is the stable sampling ratio for generalized sampling.
\end{corollary}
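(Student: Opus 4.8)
The plan is to convert the stability comparison of Theorem~\ref{t:perfectstability} into a comparison of sampling ratios by unwinding the definitions of $\Theta(N;\theta)$ and $\eta(\theta)$. Throughout, write $\Theta_G(N)$ for the number of Fourier samples used by $G_N$, so that $G_N = G_{N,\Theta_G(N)}$ in the notation of Section~\ref{s:GS}, and recall that $\kappa(F_{N,M}) = 1/C_{N,M}$. First I would record two monotonicity facts. Since $\mathcal{S}_M \subseteq \mathcal{S}_{M+1}$, we have $\|P_M\varphi\| \le \|P_{M+1}\varphi\|$ for every $\varphi$, so $C_{N,M}$ is non-decreasing in $M$; consequently $\{M : 1/C_{N,M} < \theta\}$ is upward closed (and nonempty, as $C_{N,M}\to 1$), hence $1/C_{N,M} < \theta$ holds if and only if $M \ge \Theta(N;\theta)$, and contrapositively $M < \Theta(N;\theta)$ forces $1/C_{N,M} \ge \theta$. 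Similarly $\Theta(N;\theta)$, and therefore $\eta(\theta)$, is non-increasing in $\theta$.

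Next I would note that perfectness alone rules out $M < N$. If $G_{N,M}$ maps into $\mathcal{T}_N$, depends only on the first $M$ samples, and fixes $\mathcal{T}_N$ pointwise, then any $g \in \mathcal{T}_N \cap \mathcal{S}_M^\perp$ satisfies $g = G_{N,M}(g) = G_{N,M}(0) = 0$; thus $\mathcal{T}_N \cap \mathcal{S}_M^\perp = \{0\}$, so $P_M$ is injective on $\mathcal{T}_N$, giving $N \le M$, and moreover, since $\mathcal{T}_N$ is finite-dimensional the infimum $C_{N,M}=\inf_{\varphi\in\mathcal{T}_N,\,\|\varphi\|=1}\|P_M\varphi\|$ is attained and cannot vanish, so $C_{N,M}>0$. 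Applying this with $M = \Theta_G(N)$ shows $\Theta_G(N) \ge N$ and $C_{N,\Theta_G(N)} > 0$ for all $N$, so Theorem~\ref{t:perfectstability} is applicable and yields, for all sufficiently large $N$, $\theta \ge \kappa(G_N) \ge \kappa(F_{N,\Theta_G(N)}) = 1/C_{N,\Theta_G(N)}$.

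To finish, fix any $\theta' \in (\theta,\infty)$. For all large $N$ we then have $1/C_{N,\Theta_G(N)} \le \theta < \theta'$, hence $\Theta_G(N) \ge \Theta(N;\theta')$ by the characterisation in the first paragraph; dividing by $N$ and taking $\limsup_{N\to\infty}$ gives $\eta_G \ge \eta(\theta')$. Since this holds for every $\theta' > \theta$ and $\eta$ is non-increasing, $\eta_G \ge \sup_{\theta' > \theta}\eta(\theta') = \lim_{\theta'\downarrow\theta}\eta(\theta')$. The step I expect to need genuine care is identifying this one-sided limit with $\eta(\theta)$ itself: the definition of $\Theta(N;\theta)$ uses a strict inequality whereas the hypothesis bounds $\kappa(G_N)$ non-strictly, so a priori $\eta$ might drop at $\theta$. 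One resolves this either by a separate continuity (right-continuity) argument for $\eta$, or — more conservatively — by recording the conclusion in the form $\eta_G \ge \eta(\theta')$ for every $\theta' > \theta$, which is exactly what the argument delivers and suffices for the applications that follow. The genuine content here is the chain $\kappa(G_N) \ge \kappa(F_{N,\Theta_G(N)}) = 1/C_{N,\Theta_G(N)}$ borrowed from Theorem~\ref{t:perfectstability}; everything else is bookkeeping with monotone sequences.
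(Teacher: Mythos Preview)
Your approach is exactly the one the paper intends: the corollary is stated without proof, merely as a consequence of Theorem~\ref{t:perfectstability}, and your argument unpacks that deduction correctly. The monotonicity of $C_{N,M}$ in $M$, the derivation of $\Theta_G(N)\ge N$ from perfectness, and the chain $\theta\ge\kappa(G_N)\ge\kappa(F_{N,\Theta_G(N)})=1/C_{N,\Theta_G(N)}$ are all sound.

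You have in fact been more careful than the paper. The strict/non-strict mismatch you flag is real: from $1/C_{N,\Theta_G(N)}\le\theta$ one cannot directly conclude $\Theta_G(N)\ge\Theta(N;\theta)$, because $\Theta(N;\theta)$ is defined via the strict inequality $1/C_{N,M}<\theta$. Your workaround---obtaining $\eta_G\ge\eta(\theta')$ for every $\theta'>\theta$---is correct and, as you note, right-continuity of $\eta$ is not obvious (indeed $\Theta(N;\cdot)$ can fail to be right-continuous at values $\theta_0$ for which $C_{N,M}=1/\theta_0$ holds for some $M$). The paper simply does not address this edge case. For the downstream application (Theorem~\ref{thrm:main2}), the weaker form you establish is entirely adequate, since that result only requires comparison with $1/(\epsilon\lceil a\rceil)$ and the exponential blow-up of Proposition~\ref{prop:exp_blowup} is insensitive to perturbing $\theta$.
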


This corollary states that, for any perfect method, the stable sampling ratio $\eta(\theta)$ cannot be lowered.  In particular, any perfect method requires at least the same number of Fourier samples to achieve as stable a reconstruction as that of generalized sampling.

Despite this result, in some cases it might seemingly be acceptable to forgo complete stability to obtain a better reconstruction.  Our next theorem, which is specific to the wavelet reconstruction problem, shows that this cannot be done in practice:

\begin{theorem}
\label{thrm:main2}
Let $G_{N}$ be as in Corollary \ref{c:universal} with sampling rate $\eta_G \geq 1$.  If $\eta_G < \frac{1}{\epsilon \lceil a \rceil}$, where $\epsilon$ is as in Section \ref{sec:setup},  then $\kappa(G_{N})$ is unbounded and $\kappa(G_{N_R})$ becomes exponentially large as $N_R\rightarrow \infty$, where $N_R$ is as defined in (\ref{NRdef}).
\end{theorem}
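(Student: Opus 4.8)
The plan is to derive Theorem~\ref{thrm:main2} from the abstract lower bound of Theorem~\ref{t:perfectstability}, which reduces everything to showing that the subspace cosine $C_{N_R,M}$ decays super-exponentially once the number of samples is only $M\approx\eta_G N_R$ with $\eta_G<1/(\epsilon\lceil a\rceil)$. Concretely, fix $\eta'\in(\eta_G,1/(\epsilon\lceil a\rceil))$; by definition of $\eta_G$ as a $\limsup$ we have $M:=\Theta_G(N_R)\le\eta' N_R$ for all large $R$, while $M\ge N_R$ for any perfect method (if $M<\dim\cT_{N_R}=N_R$, the linear map $\varphi\mapsto(\widehat\varphi_1,\dots,\widehat\varphi_M)$ on $\cT_{N_R}$ has a nonzero kernel vector $h$, and perfectness applied to $0$ and to $h$ -- which carry identical data -- forces $h=0$). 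Theorem~\ref{t:perfectstability} then gives $\kappa(G_{N_R})\ge\kappa(F_{N_R,M})=1/C_{N_R,M}$, so it suffices to exhibit a single unit vector $g\in\cT_{N_R}$ with $\norm{P_M g}$ super-exponentially small, since $C_{N_R,M}^2=\inf_{\varphi\in\cT_{N_R},\norm{\varphi}=1}\norm{P_M\varphi}^2\le\norm{P_M g}^2$.

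For the test vector I would use a high-order finite difference of fine-scale scaling functions. By Lemma~\ref{lem:decomposition}, $\cT_{N_R}$ contains all wavelets of scale $\le R-1$; since a scaling function $\phi_{R,k}=2^{R/2}\phi(2^R\cdot-k)$ supported in $[0,a]$ lies in $V_R$ and is therefore a finite combination of $\phi_{0,k'}$ and $\psi_{j,k'}$ ($0\le j\le R-1$) whose supports meet $[0,a]$ -- all of which belong to $\Omega_a$ -- every such $\phi_{R,k}$ lies in $\cT_{N_R}$. The admissible indices are $k=0,1,\dots,K$ with $K:=\fl{a(2^R-1)}$, so $2^{R-1}\le K\le\lceil a\rceil 2^R$ for large $R$. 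Set
$$
g\ =\ \binom{2K}{K}^{-1/2}\sum_{j=0}^{K}(-1)^j\binom{K}{j}\,\phi_{R,j}\ \in\ \cT_{N_R}.
$$
Orthonormality of $\{\phi_{R,j}\}_j$ and the Vandermonde identity $\sum_j\binom{K}{j}^2=\binom{2K}{K}$ give $\norm{g}=1$, and, writing $\widehat g(\omega)=\int_{\bbR} g(x)e^{-2\pi i\omega x}\,dx$,
$$
\abs{\widehat g(\omega)}^2\ =\ 2^{-R}\,\abs{\widehat\phi(2^{-R}\omega)}^2\,\binom{2K}{K}^{-1}\bigl(2\,\abs{\sin(\pi 2^{-R}\omega)}\bigr)^{2K}.
$$
Since $\mathrm{supp}\,g\subseteq[0,a]\subseteq[-T_1,T_2]$ is contained in the support window of the $s_l^\epsilon$, we have $\ip{g}{s_l^\epsilon}=\sqrt\epsilon\,\widehat g(l\epsilon)$ and hence $\norm{P_M g}^2=\epsilon\sum_{l=-\fl{M/2}}^{\cl{M/2}-1}\abs{\widehat g(l\epsilon)}^2$.

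The estimate then goes as follows. Every index $l$ in the sum obeys $|l|\le M/2$, hence $\abs{2^{-R}l\epsilon}\le M\epsilon\,2^{-(R+1)}=:\tau_R$, and $\tau_R\le\eta'\epsilon N_R 2^{-(R+1)}\to\tfrac12\eta'\epsilon\lceil a\rceil<\tfrac12$; so for $R$ large we may fix $\tau_0\in(0,\tfrac12)$ with $\tau_R\le\tau_0$ and put $\beta:=\sin(\pi\tau_0)\in(0,1)$. Using $\abs{\widehat\phi}\le\norm{\phi}_{L^1}<\infty$, monotonicity of $\sin$ on $[0,\tfrac{\pi}{2}]$, the crude bound $\binom{2K}{K}\ge 4^K/(2K+1)$, and $M\le\eta' N_R$, $2^{R-1}\le K\le\lceil a\rceil 2^R$, one obtains
$$
\norm{P_M g}^2\ \le\ \norm{\phi}_{L^1}^2\,\epsilon\,2^{-R}\,M\,(2K+1)\,4^{-K}(2\sin\pi\tau_0)^{2K}\ =\ \norm{\phi}_{L^1}^2\,\epsilon\,2^{-R}\,M\,(2K+1)\,\beta^{2K}\ \le\ C'\,2^{R}\,\beta^{2^{R}}
$$
with $C'$ independent of $R$. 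Hence $C_{N_R,M}^2\le C'2^{R}\beta^{2^{R}}$, so $\kappa(G_{N_R})\ge 1/C_{N_R,M}\ge (C')^{-1/2}\,2^{-R/2}\,\beta^{-2^{R-1}}$; since $\beta<1$ and $2^{R-1}\ge N_R/(4\lceil a\rceil)$ for large $R$, the right-hand side exceeds $(1/\beta)^{c N_R}$ for some $c>0$, i.e.\ grows exponentially in $N_R$, and in particular $\{\kappa(G_N)\}_{N\in\bbN}$ is unbounded.

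The crux of the argument -- and where I expect the real work to lie -- is twofold. First, one must verify carefully that the finite difference $g$ genuinely lands in $\cT_{N_R}$: this is exactly the content of Lemma~\ref{lem:decomposition} together with a support count on which translates survive at scale $R$, and it is the only place where the structure of $\cT_{N_R}$ (rather than just of $V_R$) is used. Second, the whole estimate hinges on the frequency cutoff $\tau_R=M\epsilon\,2^{-(R+1)}$ staying strictly below $\tfrac12$; since $\tau_R\to\tfrac12\eta'\epsilon\lceil a\rceil$, this is precisely the hypothesis $\eta_G<1/(\epsilon\lceil a\rceil)$, which is why the threshold in the statement is sharp for this line of proof (at $\tau_R=\tfrac12$ one would have $\beta=1$ and no decay). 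Everything else -- the Vandermonde normalization, the Fourier transform of a difference of dilated translates, and $\binom{2K}{K}\ge 4^K/(2K+1)$ -- is routine bookkeeping. Finally, the same scheme applies to compactly supported MRA wavelets under the Riesz-basis hypothesis (v$'$): one expands $\phi_{R,k}$ in coarser wavelets using the (compactly supported) dual system and absorbs the Riesz constants into $C'$, leaving the exponential rate unchanged.
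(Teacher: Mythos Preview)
Your argument is correct and takes a genuinely different route from the paper's. Both proofs reduce, via Theorem~\ref{t:perfectstability} and $\kappa(F_{N_R,M})=1/C_{N_R,M}$, to exhibiting a unit vector in $\cT_{N_R}$ (built from scaling functions $\phi_{R,k}$ with $0\le k\le K$, which lie in $\cT_{N_R}$ by Lemma~\ref{lem:decomposition}(iii)) whose low-frequency Fourier content is exponentially small when $M\epsilon/2^R$ stays below~$1$. The paper does this by invoking the Chebyshev-type extremal trigonometric polynomials $Q_{p,\omega}(z)=Q_{2p}(\sin(z/2)/\sin(\omega/2))$ and a Remez-type inequality of Erd\'elyi (Proposition~\ref{prop:cheb}): these polynomials have $\|Q_{p,\omega}\|_{L^\infty[-\omega,\omega]}=1$ while $\|Q_{p,\omega}\|_{L^\infty[-\pi,\pi]}\ge e^{c_1 p(\pi-\omega)}$, so their normalized versions are exponentially small on $[-\omega,\omega]$ with $\omega=\pi c\epsilon$. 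You instead take the $K$-th finite difference $g\propto\sum_j(-1)^j\binom{K}{j}\phi_{R,j}$, which produces the explicit factor $(1-e^{-2\pi i\omega/2^R})^K$ and hence the clean pointwise bound $(2\sin\pi\tau_R)^{2K}$ on $|\widehat g|^2$; the central-binomial lower bound $\binom{2K}{K}\ge 4^K/(2K+1)$ then converts this directly into $\beta^{2K}$ decay. Your approach is more elementary and fully self-contained, avoiding any appeal to extremal polynomial theory; the paper's Chebyshev construction is sharper in the implied constants (being the extremizer for the $L^\infty$ ratio problem) and splits into two ranges of $c$, but both yield the same exponential rate in $N_R$ and the same sharp threshold $\eta_G=1/(\epsilon\lceil a\rceil)$, for exactly the reason you identify: the governing quantity $\tau_R=M\epsilon\,2^{-(R+1)}\to\tfrac12\eta_G\epsilon\lceil a\rceil$ crosses $\tfrac12$ precisely there. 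One cosmetic point: you say ``super-exponentially'' in the first paragraph but then (correctly) establish exponential growth in $N_R$; the latter is what the theorem asserts.
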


This theorem demonstrates that any attempt to improve upon generalized sampling by lowering the sampling rate will result in extremely poor stability, and consequently extreme sensitivity to noise and round-off error.  Prior to this result, one may have hoped that sampling below the critical threshold $\eta = \frac{1}{\epsilon \lceil a \rceil}$ might only result in mildly growing condition numbers.  This theorem demonstrates that this is not the case: stability rapidly declines dramatically once $\eta < \frac{1}{\epsilon \lceil a \rceil}$. 

Corollary \ref{c:universal} and Theorem \ref{thrm:main2} establish the \textit{universality} of the stable sampling rate, and the pitfalls of trying to circumvent the stability barrier $\eta \geq \frac{1}{\epsilon \lceil a \rceil }$.  However, they are valid only for perfect methods.  Recall that the question of non-perfect methods was addressed by Theorem \ref{t:optimalitylinearSSR}.  In terms of the sampling ratio, this implies that any non-perfect method which has a lower sampling ratio for a particular function $f$ satisfying (\ref{algconv}) can only outperform generalized sampling by a constant factor.  Note that the problem of recovering wavelet coefficients from Fourier samples certainly satisfies the assumptions of Theorem \ref{t:optimalitylinearSSR}:  as we prove, the stable sampling rate is linear, and for typical functions $f$, it is usually the case that the wavelet coefficients decay algebraically (which implies (\ref{algconv})).

\subsection{Sharp results for the Daubechies wavelets}
Although Theorem \ref{them;main} establishes linearity of the stable sampling rate for any compactly supported MRA wavelet  basis, it does not provide the precise constant of proportionality.  Nor is it straightforward to determine an upper bound, since the quantity $S_{\theta}$ is not given explicitly.  Although one can in theory estimate $S_{\theta}$ by carefully following the steps of the proof, we shall not do this.  Instead, in this section we show that for the important case of Daubechies wavelets the constant can be determined exactly.

\begin{remark}
The fact that the constant may not be known in general does not necessary prohibit implementation of generalized sampling.  As discussed in \cite{BAACHOptimality}, the stable sampling rate is explicitly computable, and thus the constant can actually be determined  \textit{a priori} for each particular case through numerical means.
\end{remark}

Our main result is as follows:

\begin{theorem}\label{prop:daubechies_case}
Let $\mathcal{S}$ and $\mathcal{T}$ be the sampling and reconstruction spaces defined in Section \ref{sec:setup}, where $\mathcal{T}$ is generated by a Daubechies wavelet, and recall $N_R$ from (\ref{NRdef}). Then, there exists $\theta\in (1,\infty)$ and $R_0\in\mathbb{N}$ such that for all $R\geq R_0$, the stable sampling rate is  
$
\Theta(N_R; \theta) = \left\lceil 2^R/\epsilon\right\rceil$. 
In particular, when  $1/\epsilon\in\mathbb{Z}$ it suffices to let 
$
\theta>\left(\inf_{\xi\in [-\pi,\pi]}\abs{\hat{\phi}(\xi)}\right)^{-1}
$. 
Moreover, in addition to this, for Haar wavelets, where $a=1$, we have that $\Theta(N_R; \theta) \leq \left\lceil 2^R/\epsilon\right\rceil$ for all $R\in\mathbb{N}$.
\end{theorem}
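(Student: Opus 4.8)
The plan is to push the infimum defining $C_{N_R,M}$ onto trigonometric polynomials evaluated on an arithmetic progression of frequencies, and then to exploit the fact that, for $1/\epsilon\in\bbN$, the prescribed value $M=\lceil 2^R/\epsilon\rceil=2^R/\epsilon$ makes that progression land on exactly $M$ equispaced points covering one full period. First I would invoke Lemma~\ref{lem:decomposition}: since $\cT_{N_R}$ is spanned by all wavelets up to scale $R-1$, the MRA identity $V_0\oplus W_0\oplus\cdots\oplus W_{R-1}=V_R$ shows $\{\phi_{R,k}:k\in K_R\}$ is an orthonormal basis of $\cT_{N_R}$, where $K_R$ is a block of $N_R$ consecutive integers. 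Hence a unit $\varphi\in\cT_{N_R}$ is exactly $\varphi=\sum_{k\in K_R}c_k\phi_{R,k}$ with $\sum_k\abs{c_k}^2=1$, and $\hat\varphi(\omega)=2^{-R/2}\hat\phi(2^{-R}\omega)\,m_c(2^{-R}\omega)$ with $m_c(\xi)=\sum_{k\in K_R}c_k e^{-ik\xi}$ a $2\pi$-periodic trigonometric polynomial normalised by $\frac{1}{2\pi}\int_{-\pi}^{\pi}\abs{m_c}^2=1$. Because $\mathrm{supp}(\varphi)$ lies in the interval $I$ on which $\{s_l^\epsilon\}_{l\in\bbZ}$ is an orthonormal basis, one has $\ip{\varphi}{s_l^\epsilon}=\sqrt{\epsilon}\,\hat\varphi(2\pi l\epsilon)$, Parseval gives $\epsilon\sum_{l\in\bbZ}\abs{\hat\varphi(2\pi l\epsilon)}^2=1$, and with $\delta:=2\pi\epsilon 2^{-R}$ and $I_M$ the index set of \eqref{eq:S_m},
\[
\norm{P^\epsilon_M\varphi}^2=\epsilon\sum_{l\in I_M}\abs{\hat\varphi(2\pi l\epsilon)}^2=\epsilon 2^{-R}\sum_{l\in I_M}\abs{\hat\phi(l\delta)}^2\abs{m_c(l\delta)}^2 .
\]

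For $1/\epsilon=L\in\bbN$ and $M=2^RL$ (with $R\geq1$), the nodes $\{l\delta:l\in I_M\}$ are precisely $\theta_j:=-\pi+2\pi j/M$, $j=0,\dots,M-1$, and $\epsilon 2^{-R}=1/M$. A short count shows $\max K_R-\min K_R=N_R-1<M$ for every $R$ (using $M\geq 2^R(T_1+T_2)$ with the formula for $N_R$), so no two indices of $K_R$ differ by a multiple of $M$, and the discrete Parseval identity on $\bbZ_M$ gives $\frac1M\sum_j\abs{m_c(\theta_j)}^2=\sum_k\abs{c_k}^2=1$ \emph{exactly}. Therefore, uniformly over unit $\varphi\in\cT_{N_R}$,
\[
\norm{P^\epsilon_M\varphi}^2=\frac1M\sum_{j=0}^{M-1}\abs{\hat\phi(\theta_j)}^2\abs{m_c(\theta_j)}^2\ \geq\ \Big(\inf_{\xi\in[-\pi,\pi]}\abs{\hat\phi(\xi)}^2\Big)\cdot\frac1M\sum_j\abs{m_c(\theta_j)}^2=\inf_{\xi\in[-\pi,\pi]}\abs{\hat\phi(\xi)}^2 ,
\]
so $C_{N_R,M}\geq\inf_{[-\pi,\pi]}\abs{\hat\phi}$. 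For a Daubechies scaling function $\hat\phi$ is continuous, and from $\hat\phi=\prod_{j\geq1}m_0(2^{-j}\cdot)$ together with the fact that the Daubechies low-pass filter vanishes only at odd multiples of $\pi$, $\hat\phi$ has no zeros in $\{\abs\xi<2\pi\}\supseteq[-\pi,\pi]$; hence $\inf_{[-\pi,\pi]}\abs{\hat\phi}>0$ and $1/C_{N_R,M}\leq(\inf_{[-\pi,\pi]}\abs{\hat\phi})^{-1}$. Thus any $\theta>(\inf_{[-\pi,\pi]}\abs{\hat\phi})^{-1}$ yields $\Theta(N_R;\theta)\leq\lceil 2^R/\epsilon\rceil$. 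The Haar case $a=1$ is identical with $\inf_{[-\pi,\pi]}\abs{\hat\phi}=2/\pi$, and since the inequality $N_R-1<M$ holds for \emph{all} $R$, this gives $\Theta(N_R;\theta)\leq\lceil 2^R/\epsilon\rceil$ for every $R$.

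For the matching lower bound $\Theta(N_R;\theta)\geq\lceil 2^R/\epsilon\rceil$ I would test with the coefficients $c_k=(-1)^k/\sqrt{N_R}$, for which $m_c$ is the normalised Dirichlet kernel centred at the frequency $\xi=-\pi$ that corresponds to the single Fourier sample removed when passing from $M$ to $M-1$. Then $\abs{m_c(\theta_0)}^2=N_R$ and $\abs{m_c}^2$ is concentrated within $O(M/N_R)$ nodes of $\theta_0=-\pi$, so deleting that node removes $\tfrac1M\abs{\hat\phi(-\pi)}^2 N_R\to\abs{\hat\phi(\pi)}^2\,\epsilon\lceil a\rceil$ of the norm while the surviving sum tends to $\abs{\hat\phi(\pi)}^2(1-\epsilon\lceil a\rceil)$, the kernel mass localising at $\pm\pi$ where $\hat\phi$ is continuous. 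Hence $\limsup_R C_{N_R,M-1}^2\leq\abs{\hat\phi(\pi)}^2(1-\epsilon\lceil a\rceil)$, which sits strictly below $(\inf_{[-\pi,\pi]}\abs{\hat\phi})^2$ (for Haar $\abs{\hat\phi(\pi)}$ \emph{is} that infimum, so the drop is by the factor $1-\epsilon$; in general there is room because $\epsilon\lceil a\rceil>0$). Choosing $\theta$ in the resulting window just above $(\inf_{[-\pi,\pi]}\abs{\hat\phi})^{-1}$ then gives $C_{N_R,M-1}<1/\theta<C_{N_R,M}$ for all $R\geq R_0$, and $\Theta(N_R;\theta)=\lceil 2^R/\epsilon\rceil$ follows from monotonicity of $C_{N,M}$ in $M$.

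The main obstacle is the case $1/\epsilon\notin\bbZ$ (needed for the general Daubechies statement): then $\{l\delta:l\in I_M\}$ only \emph{approximately} tiles a period of $m_c$, and the exact discrete Parseval identity must be replaced by a one-sided Marcinkiewicz--Zygmund estimate $\frac1M\sum_{l\in I_M}\abs{m_c(l\delta)}^2\gtrsim\norm{c}_{\ell^2}^2$, which requires controlling a quadrature error uniformly over trigonometric polynomials of degree $\sim 2^R$ sampled at $\sim 2^R/\epsilon$ points — possible precisely because $N_R<M$ strictly, but at the cost of a worse, non-explicit constant $\theta$. A clean route is to replace $\epsilon$ by $\epsilon':=2^R/M\leq\epsilon$, for which the $M$ nodes tile a period exactly, and to absorb the $O(\delta)$ perturbation of the sampling inner products into $\theta$; checking that this perturbation does not destroy the strict gap $\limsup_R C_{N_R,M-1}<\liminf_R C_{N_R,M}$ is the delicate point, and it is what forces the restriction $R\geq R_0$ and the particular admissible range of $\theta$.
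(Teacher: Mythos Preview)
Your argument for the Haar case ($a=1$) is essentially the paper's Case~1/Case~2: there $\cT_{N_R}=\mathrm{span}\{\phi_{R,k}:0\le k\le 2^R-1\}$ exactly, $N_R=2^R$, and the discrete Parseval step (Lemma~\ref{lem:DFT}) goes through because $N_R\le M$.

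The gap is in the general Daubechies step. Your opening claim that ``$\{\phi_{R,k}:k\in K_R\}$ is an orthonormal basis of $\cT_{N_R}$ for a block $K_R$ of $N_R$ consecutive integers'' is false when $a>1$. The space $\cT_{N_R}=V_0^{(a)}\oplus W_0^{(a)}\oplus\cdots\oplus W_{R-1}^{(a)}$ contains the \emph{boundary} wavelets $\psi_{j,l}$ with $-\lceil a\rceil+1\le l<0$ and $2^j\le l\le 2^j\lceil a\rceil-1$, whose supports overhang $[0,a]$; expanding these in $\{\phi_{R,k}\}$ requires indices ranging over $A_{R,1}\le k\le A_{R,2}$ (Lemma~\ref{lem:decomposition}(i)), a block of length $2^R(3\lceil a\rceil-2)+\lceil a\rceil-1$, which for $a>1$ is strictly larger than $N_R$ and, crucially, can \emph{exceed} $M=\lceil 2^R/\epsilon\rceil$ (recall $1/\epsilon$ may equal the minimal $T_1+T_2=3\lceil a\rceil-2$). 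So your discrete Parseval identity $\tfrac1M\sum_j|m_c(\theta_j)|^2=\|c\|^2$ is unavailable: there is aliasing. The paper's fix is not a quadrature perturbation but a structural one: split $\Phi=\Phi_1+\Phi_2$ where $\Phi_2$ carries only the top $\lceil a\rceil$ coefficients, show via Lemma~\ref{lem:decomposition}(ii) that $\|\Phi_2\|^2=O(2^{-R})$ (these coefficients come only from the two coarsest boundary functions $\phi_{0,\lceil a\rceil-1},\psi_{0,\lceil a\rceil-1}$), and apply Lemma~\ref{lem:DFT} or Theorem~\ref{thm:grochenig} to $\Phi_1$, whose length now \emph{does} fit under $M$. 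The cross term $2C_{\Phi_1}C_{\Phi_2}=O(2^{-R/2})$ is what forces $R\ge R_0$ in the non-Haar case.

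For the lower bound you propose a Dirichlet-kernel test to show $C_{N_R,M-1}$ drops by a fixed amount; the paper instead invokes Proposition~\ref{prop:exp_blowup} (Chebyshev extremal polynomials) to get $C_{N_R,M}\to 0$ exponentially whenever $M<2^R/\epsilon$. Your route could in principle work to produce the \emph{equality} $\Theta(N_R;\theta)=\lceil 2^R/\epsilon\rceil$ for a narrow window of $\theta$, but the localisation claim ``the kernel mass localising at $\pm\pi$'' needs a genuine argument (the removed node controls only the single value $m_c(-\pi)$, and you must show the \emph{remaining} nodes do not compensate), and your asserted limit $|\hat\phi(\pi)|^2(1-\epsilon\lceil a\rceil)$ is not obviously below $(\inf_{[-\pi,\pi]}|\hat\phi|)^2$ for general Daubechies $\phi$ since the infimum need not be attained at $\pi$.
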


\begin{remark}
Note also that for such values of $\theta$ and $R$ in Theorem \ref{prop:daubechies_case}, if $N$ is such that $N_{R-1}+1\leq N\leq N_R$, then
$\Theta(N;\theta)\leq \left\lceil 2^R/\epsilon\right\rceil
$.
Therefore, we have that
$$
\dfrac{1}{\epsilon\lceil a\rceil}\leq \eta(\theta) \leq \lim_{R\to\infty} \frac{\left\lceil 2^R/\epsilon\right\rceil}{N_{R-1}+1} = \dfrac{2}{\epsilon\lceil a\rceil}.
$$ 
However, our numerical results in Section \ref{s:numexp} suggest that the optimal ratio is $(\epsilon\lceil a\rceil)^{-1}$ and is attained only when $N=N_R$.
\end{remark}

\section{Proof of Theorem \ref{them;main}}\label{s:prf1}
The proof of Theorem \ref{them;main} requires a series of lemmas and propositions that will be presented below. The actual proof can be found at the very end of this section.

\subsection{Expressing wavelets in terms of the scaling function}
We will demonstrate in this section that due to standard MRA properties, given any $N\in\mathbb{N}$, all basis elements of $\mathcal{T}_N$ may be expressed as a linear combination of finitely many basis elements of $\br{\phi_{R,k}: k \in \mathbb{Z}}$ for some $R\in\mathbb{N}$.
Let therefore, for $j \in \mathbb{Z}_+,$
\begin{align*}
V_j &= \overline{\mathrm{span}\br{\phi_{j,k}: k \in \mathbb{Z}}},\\
W_j &= \overline{\mathrm{span}\br{\psi_{j,k}: k \in \mathbb{Z}}},\\
V^{(a)}_0 &= \mathrm{span}\br{\phi_{0,k}: k \in \mathbb{Z}, \abs{k}\leq \left\lceil a\right\rceil -1},\\
W^{(a)}_j &= \mathrm{span}\br{\psi_{j,k}: k \in \mathbb{Z}, -\left\lceil a\right\rceil +1\leq k \leq 2^j \left\lceil a\right\rceil -1}.\\
\end{align*}
The following lemma relates $\mathcal{T}_N$ to the two latter types of subspaces.

\begin{lemma}\label{lem:decomposition}
For $R \in \mathbb{N}$, let 
\begin{equation}\label{eq:A_R}
\begin{split}
A_{R,1}&=-(2^R+1)\left\lceil a\right\rceil+2^R+1,\\
A_{R,2}&= 2^{R+1}\left\lceil a\right\rceil -2^R-1,\\
\mathbb{V}_{R,a} &= \mathrm{span}\br{\phi_{R,k}: A_{R,1} \leq k\leq A_{R,2}}.
\end{split}
\end{equation} Then, the following holds:
\begin{itemize}
\item[(i)] 
\begin{equation}\label{span}
 V^{(a)}_0\oplus \left(\mathop{\oplus}_{j=0}^{R-1}W^{(a)}_j\right) \subset \mathbb{V}_{R,a}.
\end{equation}
\item[(ii)] Let $N=N_R$, as defined in (\ref{NRdef}). Then
\begin{align*}
\mathcal{T}_N = V_0^{(a)}\oplus W_0^{(a)}\oplus\cdots\oplus W_{R-1}^{(a)} \subset \mathbb{V}_{R,a},
\end{align*}
where $\mathcal{T}_N$ is defined in (\ref{ordering}) and (\ref{ordering2}). 
Moreover, if $\norm{\phi}_\infty$ and $\norm{\psi}_\infty$ exist, then given any $\varphi\in \cT_N$ such that $\norm{\varphi}=1$ and $R\geq \log_2(\lceil a\rceil -1)$, the following holds:
$$
\varphi=\sum_{j=A_{R,1}}^{A_{R,2}} \alpha_j \phi_{R,j}, \qquad 
\sum_{j=A_{R,1}}^{A_{R,2}}\abs{\alpha_j}^2 =1
$$
and
$$
\sum_{j=A_{R,2}-\lceil a\rceil +1}^{A_{R,2}}\abs{\alpha_j}^2 \leq \frac{\left(\norm{\phi}_\infty+\norm{\psi}_\infty\right)^2 \lceil a\rceil(\lceil a\rceil+1)}{2^{R+1}}. 
$$
This bound  shows that although each element of $\mathcal{T}_N$ may be expressed as a linear combination of elements in $\br{\phi_{R,k}: A_{R,1} \leq k\leq A_{R,2}}$, the contribution of $\phi_{R,k}$ for $A_{R,2}-\cl{a}\leq k\leq A_{R,2}$ is insignificant when $R$ is large. This fact will be used in the proof of Theorem \ref{thrm:main2}.

\item[(iii)]
\begin{equation}\label{eq:span2}
\phi_{R,k}\in V_0^{(a)}\oplus W_0^{(a)}\oplus\cdots\oplus W_{R-1}^{(a)} \qquad \text{whenever } 0\leq k \leq (2^R-1) \left\lceil a\right\rceil
\end{equation}
\end{itemize}
\end{lemma}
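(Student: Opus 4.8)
The plan is to exploit the standard two-scale relations of an MRA, keeping careful track of which translates of $\phi_{R,\cdot}$ are actually needed to express a given $\phi_{0,k}$ or $\psi_{j,k}$. Write $\phi = \sum_{n} h_n \phi(2\cdot - n)$ and $\psi = \sum_n g_n \phi(2 \cdot - n)$ for the (finite) refinement masks; since $\operatorname{supp}(\phi) = \operatorname{supp}(\psi) = [0,a]$, the sums run over $0 \le n \le \lceil a \rceil$ (at most, using that the mask length is controlled by the support), so $h_n, g_n = 0$ outside this range. Iterating the dilation equation $R$ times expresses $\phi_{0,k}$ as a linear combination of $\{\phi_{R,m}\}$ with $m$ ranging over an interval of the form $[2^R k, 2^R k + 2^R(a-1) + \text{const}]$, and similarly $\psi_{j,k}$ (for $0 \le j \le R-1$) decomposes into $\{\phi_{R,m}\}$ with $m$ in a comparable range determined by $2^{R-j} k$. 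For part (i), I would take the union of these index ranges over all $\phi_{0,k}$ with $|k| \le \lceil a\rceil - 1$ and all $\psi_{j,k}$ in $W_j^{(a)}$, $0 \le j \le R-1$, and check the extremes of that union are $\ge A_{R,1}$ on the left and $\le A_{R,2}$ on the right; this is a bookkeeping computation using $A_{R,1} = -(2^R+1)\lceil a\rceil + 2^R + 1$ and $A_{R,2} = 2^{R+1}\lceil a \rceil - 2^R - 1$, and the geometric sum $\sum_{j=0}^{R-1} 2^j = 2^R - 1$. Part (iii) is the same computation restricted to a single $\phi_{R,k}$: one checks directly from the refinement relation that $\phi_{R,k} \in V_{R}$ is a combination of $V_0^{(a)} \oplus \bigoplus_{j=0}^{R-1} W_j^{(a)}$ provided $0 \le k \le (2^R - 1)\lceil a \rceil$, by verifying that the coarse-scale translates whose fine-scale expansions involve $\phi_{R,k}$ all lie in the allowed index windows; this again reduces to interval arithmetic.

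For part (ii), the identity $\mathcal{T}_{N_R} = V_0^{(a)} \oplus W_0^{(a)} \oplus \cdots \oplus W_{R-1}^{(a)}$ follows from the ordering (\ref{ordering}): the first $N_R$ vectors in the list are exactly $\{\phi_{0,k} : |k| \le \lceil a\rceil - 1\}$ together with $\{\psi_{j,k}\}$ for $0 \le j \le R-1$ in the ranges defining $W_j^{(a)}$, and a direct count of these gives $2\lceil a \rceil - 1$ (scaling functions) plus $\sum_{j=0}^{R-1}(2^j \lceil a\rceil + \lceil a \rceil - 1) = (2^R - 1)\lceil a\rceil + R(\lceil a \rceil - 1)$ wavelets, summing to $N_R = 2^R\lceil a\rceil + (R+1)(\lceil a\rceil - 1)$ as in (\ref{NRdef}). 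The inclusion into $\mathbb{V}_{R,a}$ is then immediate from part (i). The coefficient representation $\varphi = \sum_{j=A_{R,1}}^{A_{R,2}} \alpha_j \phi_{R,j}$ with $\sum |\alpha_j|^2 = 1$ holds because $\{\phi_{R,j}\}_{j}$ is orthonormal and $\varphi \in \mathbb{V}_{R,a}$ has unit norm, so $\alpha_j = \langle \varphi, \phi_{R,j}\rangle$.

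The genuinely nontrivial part is the tail bound $\sum_{j = A_{R,2} - \lceil a\rceil + 1}^{A_{R,2}} |\alpha_j|^2 \le (\|\phi\|_\infty + \|\psi\|_\infty)^2 \lceil a\rceil(\lceil a\rceil+1) / 2^{R+1}$. The idea here is that the rightmost coordinates $\alpha_j$ of any $\varphi \in \mathcal{T}_{N_R}$ can only pick up mass from the few wavelet/scaling basis elements in the decomposition whose fine-scale expansions reach into the far-right index window $[A_{R,2} - \lceil a\rceil + 1, A_{R,2}]$. From the index-range analysis in part (i), only $\mathcal{O}(\lceil a \rceil)$ of the $N_R$ generating wavelets $\psi_{j,k}$ (those with $k$ near the right end of the $W_j^{(a)}$ range, across the various scales $j$) have $\phi_{R,\cdot}$-expansions overlapping this window, and each contributes an $\ell^2$ tail controlled by its own $\ell^2$ expansion coefficients in the $\phi_{R,\cdot}$ basis. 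For a single $\psi_{j,k}$, the $\phi_{R,m}$-coefficients are $\langle \psi_{j,k}, \phi_{R,m}\rangle = \int 2^{j/2}\psi(2^j x - k)\, 2^{R/2}\overline{\phi(2^R x - m)}\,dx$, whose modulus is bounded by $\|\psi\|_\infty \|\phi\|_\infty 2^{(j-R)/2} \cdot 2^{-R} \cdot (\text{overlap length}) \lesssim \|\psi\|_\infty\|\phi\|_\infty 2^{(j-R)/2}$ per coefficient, with only $\mathcal{O}(1)$ nonzero overlapping $m$ in the window; squaring and summing the scales $0 \le j \le R-1$ yields a geometric series bounded by a constant times $(\|\phi\|_\infty + \|\psi\|_\infty)^2 / 2^R$, and carrying the $\lceil a \rceil$ factors (from the window width and the number of relevant $k$ per scale) through the estimate produces the stated $\lceil a\rceil(\lceil a\rceil+1)/2^{R+1}$ bound. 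I expect the main obstacle to be making this overlap-counting argument precise — that is, pinning down exactly which $(j,k)$ pairs contribute to the right-tail window and bounding their count and individual $\ell^2$ masses simultaneously — since the crude per-coefficient $\sup$-norm estimate must be combined with the orthonormality of $\{\phi_{R,\cdot}\}$ and the Bessel/Parseval relation for $\varphi = \sum_{\varphi' } c_{\varphi'}\varphi'$ (over the $N_R$ generators) in a way that does not lose the critical $2^{-R}$ decay. The hypothesis $R \ge \log_2(\lceil a\rceil - 1)$ presumably enters to guarantee that the right-end windows at different scales are disjoint, so that the contributions can be added up cleanly rather than overlapping.
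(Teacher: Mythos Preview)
Your handling of parts (i), (iii), and the dimension count establishing $\mathcal{T}_{N_R}=V_0^{(a)}\oplus\cdots\oplus W_{R-1}^{(a)}$ is essentially the paper's argument (the paper uses support overlap of $\phi_{0,l}$ and $\phi_{R,k}$ directly rather than iterating the refinement mask, but both lead to the same index arithmetic).

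The tail bound, however, has a genuine gap. You believe the rightmost window $[A_{R,2}-\lceil a\rceil+1,A_{R,2}]$ receives contributions from wavelets $\psi_{j,k}$ with $k$ near the top of its range \emph{at every scale} $0\le j\le R-1$, and you propose to sum these via a geometric series. But with your own per-coefficient estimate $|\langle\psi_{j,k},\phi_{R,m}\rangle|\lesssim 2^{(j-R)/2}$, squaring and summing over $j$ gives $\sum_{j=0}^{R-1}2^{j-R}=1-2^{-R}=\mathcal{O}(1)$, not $\mathcal{O}(2^{-R})$; the claimed $2^{-R}$ decay simply does not emerge this way.

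The paper's mechanism is different. The key observation is that, once $2^R\ge\lceil a\rceil-1$, the support analysis from (i) shows
\[
\mathrm{span}\{\phi_{0,l},\psi_{0,l}:\,l\le\lceil a\rceil-2\}\ \oplus\ \bigoplus_{j=1}^{R-1}W_j^{(a)}\ \subset\ \mathrm{span}\{\phi_{R,k}:\,k\le A_{R,2}-\lceil a\rceil\},
\]
so the \emph{only} basis elements whose $\phi_{R,\cdot}$-expansion reaches the tail window are the two extreme scale-$0$ elements $\phi_{0,\lceil a\rceil-1}$ and $\psi_{0,\lceil a\rceil-1}$. Consequently, for $k$ in the tail window,
\[
\alpha_k=b_{\lceil a\rceil-1}\langle\phi_{0,\lceil a\rceil-1},\phi_{R,k}\rangle+c_{0,\lceil a\rceil-1}\langle\psi_{0,\lceil a\rceil-1},\phi_{R,k}\rangle,
\]
with $|b_{\lceil a\rceil-1}|,|c_{0,\lceil a\rceil-1}|\le 1$. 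The $2^{-R}$ decay now comes not from summing over scales but from the overlap geometry at scale $0$: the intersection of $\mathrm{supp}\,\phi_{0,\lceil a\rceil-1}$ with $\mathrm{supp}\,\phi_{R,A_{R,2}-j}$ has length $(j{+}1)/2^R$, so by Cauchy--Schwarz $|\langle\phi_{0,\lceil a\rceil-1},\phi_{R,A_{R,2}-j}\rangle|\le\|\phi\|_\infty\sqrt{(j{+}1)/2^R}$, and summing $j=0,\ldots,\lceil a\rceil-1$ yields the $\lceil a\rceil(\lceil a\rceil+1)/2^{R+1}$ factor. The hypothesis $R\ge\log_2(\lceil a\rceil-1)$ is precisely what rules out any contribution from scales $j\ge 1$; it is not a disjointness condition between windows at different scales as you guessed.
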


\begin{proof}
To prove (i) we start by observing that
 in MRA,  for each $R \in \mathbb{N}$ we have that
$$
V_{R} = V_0 \oplus \left( \mathop{\oplus}_{l=0}^{R-1}W_l \right), \qquad
V_{R} \supset V^{(a)}_0 \oplus \left( \mathop{\oplus}_{l=0}^{R-1} W^{(a)}_l \right). 
$$
Thus,
since $\br{\phi_{R,k} : k\in \mathbb{Z}}$ is an orthonormal basis for the closed subspace $V_R$, it follows that, 
given $l\in\mathbb{Z}$ such that $\abs{l}\leq \left\lceil a\right\rceil -1$,
\begin{align*}
\phi_{0,l}=\sum_{k\in\mathbb{Z}} \beta_k \phi_{R,k}, \qquad \beta_k =  \int_{\mathbb{R}} \phi_{0,l}(x)\overline{\phi_{R,k}(x)} \mathrm{d}x.
\end{align*}
Note that $\phi$ has compact support, so finitely many $\beta_k$'s are non-zero. In particular, $\beta_k=0$ if $k$ is such that $\mathrm{measure}\left(\mathrm{supp}(\phi_{0,l})\cap\mathrm{supp}(\phi_{R,k})\right)=0$. So, $\beta_k\neq0$ only if 
\begin{align*}
2^Rl-\lceil a\rceil+1\leq k\leq 2^R(\lceil a\rceil +l)-1
\end{align*}
and inserting $\abs{l} \leq \lceil a\rceil -1$, we find that
\begin{align}\label{eq;cond_k_1}
-(2^R+1)\left\lceil a\right\rceil  +2^R+1 \leq k\leq 2^{R+1}\left\lceil a\right\rceil -2^R-1.
\end{align}
Similarly, given $j,l\in\mathbb{Z}$ such that  $0\leq j\leq R-1$ and $-\left\lceil a\right\rceil +1 \leq l\leq 2^j\left\lceil a\right\rceil -1$,
\begin{align*}
\psi_{j,l}=\sum_{k\in\mathbb{Z}} \gamma_k \phi_{R,k} , \qquad \gamma_k =  \int_{\mathbb{R}} \psi_{j,l}(x)\overline{\phi_{R,k}(x)} \mathrm{d}x.
\end{align*}
Note that $\gamma_k=0$ if $k$ is such that $\mathrm{measure}\left(\mathrm{supp}(\psi_{j,l})\cap\mathrm{supp}(\phi_{R,k})\right)=0$. 
Thus, $\gamma_k \neq 0$ only if
 $$2^R\left(\dfrac{l}{2^j}\right)-\left\lceil a\right\rceil +1 \leq k \leq 2^R\left(\dfrac{l+\left\lceil a\right\rceil}{2^j}\right)-1.$$
Hence, $\gamma_k\neq0$ only if $k$ satisfies
\begin{align}\label{eq;cond_k_2}
-(2^{R-j}+1)\left\lceil a\right\rceil  +2^{R-j}+1\leq k\leq (2^R + 2^{R-j})\left\lceil a\right\rceil  -2^{R-j}-1.
\end{align}
Since we have shown that $\beta_k$ and $\gamma_k$ can be non-zero only if (\ref{eq;cond_k_1}) and (\ref{eq;cond_k_2}) are satisfied, we have demonstrated that all elements in $V_0^{(a)}\oplus W_0^{(a)}\oplus\cdots\oplus W_{R-1}^{(a)}$ may be represented as a linear combination of elements in $\mathbb{V}_{R,a}$, and we have proved (\ref{span}).

To prove (ii), note that $W_j^{(a)}$ has $(2^j +1)\left\lceil a\right\rceil -1$ basis elements. So, $V_0^{(a)}\oplus W_0^{(a)}\oplus\cdots\oplus W_{R-1}^{(a)}$ has precisely
\begin{align*}
2\left\lceil a\right\rceil -1 + \sum_{j=0}^{R-1}\left((2^j +1)\left\lceil a\right\rceil -1\right)=2^R \left\lceil a\right\rceil + (R+1)(\left\lceil a\right\rceil -1)
\end{align*}
basis elements.
Thus whenever $N=2^R \left\lceil a\right\rceil + (R+1)(\left\lceil a\right\rceil -1)$, it follows by the ordering in (\ref{ordering}), that
\begin{align*}
\mathcal{T}_N &= V_0^{(a)}\oplus W_0^{(a)}\oplus\cdots\oplus W_{R-1}^{(a)}.
\end{align*}
Hence, $\varphi\in\cT_N$ and $\norm{\varphi}=1$ implies that 
\begin{align*}
\varphi = \sum_{\abs{l}\leq \lceil a\rceil -1} b_l\phi_{0,l}+\sum_{j=0}^{R-1}\sum_{l=-\lceil a\rceil +1}^{2^j\lceil a\rceil -1}c_{j,l}\psi_{j,l} = \sum_{k=A_{R,1}}^{A_{R,2}}\alpha_k\phi_{R,k}
\end{align*} 
for some complex numbers $\br{\alpha_k}$, $\br{b_l}$ and $\br{c_{j,l}}$, where 
\begin{align*}
&\sum_{\abs{l}\leq \lceil a\rceil -1} \abs{b_l}^2 +\sum_{j=0}^{R-1}\sum_{l=-\lceil a\rceil +1}^{2^j\lceil a\rceil -1}\abs{c_{j,l}}^2=\sum_{j=A_{R,1}}^{A_{R,2}}\abs{\alpha_j}^2 =1
\end{align*}
by the orthonormality of the scaling functions and wavelets.
\\
By a similar argument to the proof of (i), it is straightforward to verify that when $2^R\geq \lceil a\rceil-1$,
\begin{align*}
\mathrm{span}\br{\phi_{0,l},\psi_{0,l} : l=-\lceil a\rceil +1,\ldots,\lceil a\rceil -2} \oplus \, \bigoplus_{j=1}^{R-1} W^{(a)}_{j}
&\subset \mathrm{span} \br{\phi_{R,k}: A_{R,1}\leq k\leq A_{R,2}-\lceil a\rceil}.
\end{align*}
Thus $$\varphi - b_{\cl{a}-1}\phi_{0,\cl{a}-1} - c_{0,\cl{a}-1}\psi_{0,\cl{a}-1}\in\mathrm{span} \br{\phi_{R,k}: A_{R,1}\leq k\leq A_{R,2}-\lceil a\rceil}$$
and it follows by orthonormality of the system $\br{\phi_{R,k}:k\in\bbZ}$ that for $k=A_{R,2}-\cl{a}+1,\ldots,A_{R,2}$, 
$$
\alpha_k =  b_{\lceil a\rceil -1}\ip{\phi_{0,\lceil a\rceil -1}}{\phi_{R,k}}+  c_{0,\lceil a\rceil -1}\ip{\psi_{0,\lceil a\rceil -1}}{\phi_{R,k}}.
$$
Let $B_k =  b_{\lceil a\rceil -1}\ip{\phi_{0,\lceil a\rceil -1}}{\phi_{R,k}}$ and $C_k = c_{0,\lceil a\rceil -1}\ip{\psi_{0,\lceil a\rceil -1}}{\phi_{R,k}}$ and suppose that both $\norm{\phi}_\infty$ and $\norm{\psi}_\infty$ exist.
\\
Then, for $j=0,\ldots,\lceil a\rceil-1$, since $\abs{b_{\lceil a\rceil -1}}\leq 1$ and $\norm{\phi_{R,A_{R,2}-j}}\leq 1$
\begin{align*}
\abs{B_{A_{R,2}-j}} \leq \norm{\phi_{R,A_{R,2}-j}}\norm{\phi_{0,\lceil a\rceil-1}\chi_{I_j}} \leq \norm{\phi}_\infty\sqrt{\dfrac{j+1}{2^R}}
\end{align*}
where $I_j = \mathrm{supp}\phi_{R,A_{R,2}-j}\cap\mathrm{supp}\phi_{0,\lceil a\rceil-1} \subset \left[2\lceil a\rceil -1 -(j+1)/2^R,2\lceil a\rceil -1\right]$. Thus
\begin{align}\label{eq:B_estimate}
\sum_{j=0}^{\lceil a\rceil -1} \abs{B_{A_{R,2}-j}}^2 \leq \frac{\norm{\phi}_\infty^2\lceil a\rceil(\lceil a\rceil +1)}{2^{R+1}}.
\end{align}
Similarly, for $j=0,\ldots,\lceil a\rceil-1$,
\begin{align*}
\abs{C_{A_{R,2}-j}} \leq \norm{\psi}_\infty \sqrt{\dfrac{j+1}{2^R}}
\end{align*}
and
\begin{align}\label{eq:C_estimate}
\sum_{j=0}^{\lceil a\rceil -1} \abs{C_{A_{R,2}-j}}^2 \leq \frac{\norm{\psi}_\infty^2\lceil a\rceil(\lceil a\rceil +1)}{2^{R+1}}.
\end{align} 
Hence, if $R\geq \log_2(\lceil a\rceil -1)$ and both $\norm{\phi}_\infty$ and $\norm{\psi}_\infty$ exist, then by the Cauchy Schwarz inequality, and estimates (\ref{eq:B_estimate}), (\ref{eq:C_estimate}),
\begin{align*}
\sum_{k=A_{R,2}-\lceil a\rceil+1}^{A_{R,2}} \abs{\alpha_k}^2 &=\sum_{k=A_{R,2}-\lceil a\rceil+1}^{A_{R,2}}\abs{B_k+C_k}^2\\
 &\leq \sum_{k=A_{R,2}-\lceil a\rceil+1}^{A_{R,2}} \left(\abs{B_k}^2+\abs{C_k}^2 \right)+ 2 \sqrt{\sum_{k=A_{R,2}-\lceil a\rceil+1}^{A_{R,2}}\abs{B_k}^2}\sqrt{\sum_{k=A_{R,2}-\lceil a\rceil+1}^{A_{R,2}}\abs{C_k}^2}\\
&= \frac{\left(\norm{\phi}_\infty+\norm{\psi}_\infty\right)^2 \lceil a\rceil(\lceil a\rceil+1)}{2^{R+1}}
\end{align*}
and this completes the proof of (ii).

Finally, to prove (iii) and (\ref{eq:span2}), note that
$$
\phi_{R,k}=\sum_{l\in\mathbb{Z}}\alpha_l\phi_{0,l}+\sum_{j=0}^{R-1}\sum_{l\in\mathbb{Z}} \beta_{j,l}\psi_{j,l}
$$
where $\alpha_l=\ip{\phi_{R,k}}{\phi_{0,l}}$ and $\beta_{j,l}=\ip{\phi_{R,k}}{\psi_{j,l}}$.
If $0\leq k \leq (2^R-1) \left\lceil a\right\rceil$, then 
$$
\mathrm{supp}(\phi_{R,k})\subset\left[0,\left\lceil a\right\rceil\right].
$$
So, if $\phi_{0,l}\not\in V_0^{(a)}$, then $\mathrm{measure}\left( \mathrm{supp}(\phi_{0,l})\cap\left[0,\left\lceil a\right\rceil\right]\right)=0$ and $\alpha_l=0$. 
Similarly, if $\psi_{j,l}\not\in W_j^{(a)}$, then $\mathrm{measure}\left(\mathrm{supp}(\psi_{j,l})\cap\left[0,\left\lceil a\right\rceil\right]\right)=0$ and $\beta_{j,l}=0$.
Hence, 
$$
\phi_{R,k}\in V_0^{(a)}\oplus W_0^{(a)}\oplus\cdots\oplus W_{R-1}^{(a)},
$$
as required.
\end{proof}

\subsection{Useful results with trigonometric polynomials}
Our proof hinges on some precise estimates on the behaviour of trigonometric polynomials. These estimates are presented below.
\begin{lemma}\label{lem:DFT}
 Let $A_1,A_2\in\mathbb{Z}$ be such that $A_1\leq A_2$ and consider the trigonometric polynomial $\Phi(z) = \sum_{j= A_1}^{A_2}\alpha_j e^{2\pi i jz}$. If $L \in \mathbb{N}$ is such that $2L\geq A_2-A_1+1$, then
$$\sum_{j=0}^{2L-1}\dfrac{1}{2L}\abs{\Phi\left(\dfrac{j}{2L}\right)}^2 = \norm{\Phi}^2_{L^2([0,1])}= \sum_{j= A_1}^{A_2}\abs{\alpha_j}^2.$$ 
\end{lemma}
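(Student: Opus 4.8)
The plan is to establish both equalities by direct expansion, using the orthogonality of complex exponentials first in the continuous $L^2([0,1])$ setting and then in the discrete setting given by equally spaced nodes. The second equality is the standard one, and the first is an exact-quadrature statement whose only subtlety is ensuring no aliasing occurs, which is exactly what the hypothesis $2L \geq A_2-A_1+1$ guarantees.

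First I would dispatch $\norm{\Phi}^2_{L^2([0,1])} = \sum_{j=A_1}^{A_2}\abs{\alpha_j}^2$. Expanding $\abs{\Phi(z)}^2 = \sum_{j,l=A_1}^{A_2}\alpha_j\overline{\alpha_l}e^{2\pi i(j-l)z}$ and integrating term by term over $[0,1]$, the orthonormality relation $\int_0^1 e^{2\pi i(j-l)z}\,\mathrm{d}z = \delta_{j,l}$ kills every cross term and leaves $\sum_{j=A_1}^{A_2}\abs{\alpha_j}^2$.

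For the first equality I would substitute the definition of $\Phi$ into $\sum_{k=0}^{2L-1}\tfrac{1}{2L}\abs{\Phi(k/(2L))}^2$ to obtain the triple sum $\tfrac{1}{2L}\sum_{k=0}^{2L-1}\sum_{j,l=A_1}^{A_2}\alpha_j\overline{\alpha_l}e^{2\pi i(j-l)k/(2L)}$, then interchange the (finite) sums to bring the sum over $k$ innermost. The inner geometric sum $\sum_{k=0}^{2L-1}e^{2\pi i(j-l)k/(2L)}$ equals $2L$ when $2L \mid (j-l)$ and $0$ otherwise. Here is the single place the hypothesis enters: since $A_1 \leq j,l \leq A_2$ we have $\abs{j-l} \leq A_2-A_1 \leq 2L-1 < 2L$, so $2L \mid (j-l)$ forces $j=l$. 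Consequently the triple sum collapses to $\tfrac{1}{2L}\cdot 2L\sum_{j=A_1}^{A_2}\abs{\alpha_j}^2 = \sum_{j=A_1}^{A_2}\abs{\alpha_j}^2$, which together with the previous paragraph yields the claim.

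There is no genuine obstacle in this lemma; it is an elementary Parseval/DFT computation. The only point requiring care is to invoke the aliasing condition $2L \geq A_2 - A_1 + 1$ at precisely the step where the exponentials $e^{2\pi i j\,\cdot}$ for $A_1 \leq j \leq A_2$ must remain pairwise distinct modulo $2L$; this is exactly what makes the $2L$-point rectangle rule integrate $\abs{\Phi}^2$ exactly.
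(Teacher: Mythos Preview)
Your proof is correct. The paper proves the same identity by explicitly casting it as an application of the unitarity of the discrete Fourier transform: it places the coefficients $\alpha_{A_1},\ldots,\alpha_{A_2}$ into a length-$2L$ vector $x$ (zero-padded), computes that $\hat{x}_k = \tfrac{1}{\sqrt{2L}}e^{-\pi i k A_1/L}\Phi(k/(2L))$, and then invokes $\norm{x}_{l^2} = \norm{\hat{x}}_{l^2}$. Your approach unpacks this same content directly via the geometric-sum identity $\sum_{k=0}^{2L-1}e^{2\pi i (j-l)k/(2L)} = 2L\,\mathbf{1}_{\{2L\mid j-l\}}$, which is of course exactly what underlies DFT unitarity. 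The two arguments are mathematically equivalent; yours is slightly more self-contained (no named transform or cited property needed), while the paper's version makes the structural reason --- Parseval for the DFT --- more visible. In both cases the hypothesis $2L\geq A_2-A_1+1$ enters at the same point, to rule out aliasing.
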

\begin{proof}
Given $N\in\mathbb{N}$, $x= (x_0,\ldots,x_{N-1}) \in \mathbb{Z}_{N}$, the Discrete Fourier Transform of $x$ is defined by $\hat{x} = (\hat{x}_0, \ldots, \hat{x}_{N-1})$, where
\begin{align*}
\hat{x}_k = \dfrac{1}{\sqrt{N}}\sum_{j=0}^{N-1} x_j e^{-\frac{2\pi i kj}{N}}.
\end{align*}
Recall that $x\mapsto\hat{x}$ is a unitary operator on $l^2(\mathbb{Z}_N)$ with $\norm{x}_{l^2(\mathbb{Z}_N)}=\norm{\hat{x}}_{l^2(\mathbb{Z}_N)}$, where
\begin{align*}
\norm{x}_{l^2(\mathbb{Z}_N)} = \sqrt{\sum_{j=0}^{N-1} \abs{x_j}^2}.
\end{align*}

The proof of this lemma is a direct application of the Discrete Fourier Transform, with $N=2L$.
Define $x = (x_0, \ldots, x_{N-1}) \in \mathbb{Z}_{2L}$ as follows
\begin{align*}
 x_{j+L} = \begin{cases}
                      \alpha_{j+A_1+L} &-L\leq j\leq -L+A_2-A_1\\
                      0 &\text{otherwise.}
                     \end{cases}
\end{align*}
Then
\begin{align*}
 \hat{x}_k &=\dfrac{1}{\sqrt{2L}}\sum_{j=0}^{2L-1} x_j e^{-\frac{2\pi i kj}{2L}}= \dfrac{1}{\sqrt{2L}}\sum_{j=-L}^{L-1} x_{j+L} e^{-\frac{2\pi i k(j+L)}{2L}}= \dfrac{e^{\frac{\pi i k A_1}{L}}}{\sqrt{2L}}\sum_{j=A_1}^{A_2} \alpha_j e^{\frac{2\pi i k j}{2L}}= \dfrac{e^{-\frac{\pi i k A_1}{L}}}{\sqrt{2L}}\Phi\left(\dfrac{k}{2L}\right).
\end{align*}
So, 
\begin{align*}
\sum_{k=0}^{2L-1} \dfrac{1}{2L}\abs{\Phi\left(\dfrac{k}{2L}\right)}^2 =  \sum_{k=0}^{2L-1}\abs{\hat{x}_k}^2 = \sum_{k=0}^{2L-1}\abs{x_k}^2 = \sum_{k = A_1}^{A_2} \abs{\alpha_k}^2.
\end{align*}

\end{proof}

The following theorem is a reworking of a result from \cite[Proposition 1]{Grochenig}.

\begin{theorem}\label{thm:grochenig}
Let $D\in\mathbb{N}$, $A\in\mathbb{R}$, $A\leq x_1 < \ldots < x_r< A+1$ and suppose that
\begin{align*}
 \delta = \max_{j=1,\ldots,r} x_{j+1}-x_j < \dfrac{1}{2D}
\end{align*}
where $x_{r+1}=x_1+1$. If $\Phi(x) = \sum_{j=D_1}^{D_2}\alpha_j e^{2\pi i jx}$ and $D_2-D_1\leq 2D$, then
\begin{align*}
 (1-2\delta D) \norm{\Phi}_{L^2[A, A+1)} \leq \left(\sum_{j=1}^r \nu_j \abs{\Phi(x_j)}^2\right)^{\frac{1}{2}} \leq (1+2\delta D) \norm{\Phi}_{L^2[A,A+1)}
\end{align*}
where $\nu_j = \dfrac{1}{2}(x_{j+1}-x_{j-1})$ and $x_0 = x_r-1$.
\end{theorem}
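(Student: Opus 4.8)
The plan is to run the classical Marcinkiewicz--Zygmund argument underlying \cite[Proposition 1]{Grochenig}: regard the left-hand quantity as a quadrature rule for $\int_A^{A+1}\abs{\Phi}^2$ built on the nodes $x_1,\dots,x_r$, and control the quadrature error. Two preliminary reductions. First, since $\sum_{j=1}^r\nu_j\abs{\Phi(x_j)}^2$ and $\norm{\Phi}_{L^2[A,A+1)}$ are unchanged under $\Phi(x)\mapsto e^{-2\pi imx}\Phi(x)$, and since $D_2-D_1\leq 2D$, I may choose $m\in\bbZ$ so that after this substitution the exponents all lie in $\br{-D,\ldots,D}$. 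Second, I fix the partition attached to the weights $\nu_j$: put $y_j=\tfrac12(x_j+x_{j+1})$ for $j=0,\dots,r$ and $Y_j=[y_{j-1},y_j]$, so that (using $x_0=x_r-1$, $x_{r+1}=x_1+1$ and telescoping) the $Y_j$ tile an interval of unit length, $x_j\in Y_j$, $\abs{Y_j}=\nu_j\leq\delta$, and $x_j$ lies within distance $\delta/2$ of each endpoint of $Y_j$. By $1$-periodicity of $\abs{\Phi}^2$ we then have $\norm{\Phi}_{L^2[A,A+1)}^2=\sum_{j=1}^r\int_{Y_j}\abs{\Phi(x)}^2\,dx$, so that
\begin{equation*}
\sum_{j=1}^r\nu_j\abs{\Phi(x_j)}^2-\norm{\Phi}_{L^2[A,A+1)}^2=\sum_{j=1}^r\int_{Y_j}\Bigl(\abs{\Phi(x_j)}^2-\abs{\Phi(x)}^2\Bigr)\,dx.
\end{equation*}

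I would then estimate the right-hand side using the fundamental theorem of calculus. Since $\abs{\Phi(x_j)}^2-\abs{\Phi(x)}^2=\int_{x}^{x_j}\tfrac{d}{dt}\abs{\Phi(t)}^2\,dt=2\Re\int_x^{x_j}\Phi'(t)\overline{\Phi(t)}\,dt$, interchanging the order of integration on each $Y_j$ rewrites the sum as $2\Re\int_A^{A+1}w(t)\,\Phi'(t)\overline{\Phi(t)}\,dt$, where $w$ is real, piecewise linear, and (because $x_j$ is within $\delta/2$ of the ends of $Y_j$) satisfies $\norm{w}_\infty\leq\delta/2$. Hence, by Cauchy--Schwarz and the fact that the $Y_j$ have total length $1$,
\begin{equation*}
\Bigl\lvert\sum_{j=1}^r\nu_j\abs{\Phi(x_j)}^2-\norm{\Phi}_{L^2[A,A+1)}^2\Bigr\rvert\leq\delta\,\norm{\Phi'}_{L^2[A,A+1)}\,\norm{\Phi}_{L^2[A,A+1)}.
\end{equation*}
The bandwidth reduction now pays off: with the exponents of $\Phi$ confined to $\br{-D,\ldots,D}$, Parseval (the $L^2$ Bernstein inequality) gives $\norm{\Phi'}_{L^2[A,A+1)}\leq 2\pi D\,\norm{\Phi}_{L^2[A,A+1)}$, whence $\bigl\lvert\sum_j\nu_j\abs{\Phi(x_j)}^2-\norm{\Phi}^2\bigr\rvert\leq c\,\delta D\,\norm{\Phi}^2$ for an absolute constant $c$.

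Taking square roots in this additive estimate (and using that $\sqrt{1+t}$ and $\sqrt{1-t}$ lie between $1-t$ and $1+t$ for $0\leq t<1$) turns it into the claimed two-sided multiplicative bound. The delicate point, and the main obstacle, is the size of the constant: the crude chain of Cauchy--Schwarz and Bernstein above produces a multiplicative factor somewhat larger than $2\delta D$, together with a correspondingly stronger smallness hypothesis on $\delta$, so to land exactly on $(1-2\delta D)$ and $(1+2\delta D)$ under the stated hypothesis $\delta<1/(2D)$ one must re-run the error estimate with the sharper bookkeeping of \cite[Proposition 1]{Grochenig} --- isolating the first-order part of the quadrature error that the symmetric weights $\nu_j$ control from a genuinely $O(\delta^2D^2)$ remainder, and tracking the spectral recentring without loss. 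Alternatively, the theorem follows directly from that proposition by the change of variables $x\mapsto 2\pi x$, matching $2D$ with Gr\"ochenig's bandwidth parameter and $\delta$, $\nu_j$ with his mesh size and weights.
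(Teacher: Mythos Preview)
The paper does not actually prove this theorem: it is stated without proof, attributed as ``a reworking of a result from \cite[Proposition 1]{Grochenig}''. So there is no in-paper argument to compare against; the paper's ``proof'' is precisely your closing remark --- invoke Gr\"ochenig's proposition after the change of variables $x\mapsto 2\pi x$.

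Your direct argument is the right one and is essentially Gr\"ochenig's own: the spectral recentring, the partition by midpoints $y_j$, and the identity $\sum_j\nu_j\abs{\Phi(x_j)}^2-\norm{\Phi}^2=2\Re\int w\,\Phi'\overline{\Phi}$ with $\norm{w}_\infty\leq\delta/2$ are all correct as written. You are also right that the crude Cauchy--Schwarz plus $L^2$ Bernstein step loses the constant: it yields $\abs{\cdot}\leq 2\pi\delta D\,\norm{\Phi}^2$ rather than the claimed factor. The fix, if you want the exact constant $2\delta D$ under $\delta<1/(2D)$, is not ``sharper bookkeeping'' of the same estimate but a genuinely different inequality at that step: Gr\"ochenig bounds $\bigl(\sum_j\nu_j\abs{\Phi(x_j)}^2\bigr)^{1/2}-\norm{\Phi}$ via the triangle inequality by $\bigl(\sum_j\int_{Y_j}\abs{\Phi(x)-\Phi(x_j)}^2\,dx\bigr)^{1/2}$ and then applies the Wirtinger inequality $\int_I\abs{g}^2\leq(\abs{I}/\pi)^2\int_I\abs{g'}^2$ (valid when $g$ vanishes at one endpoint) on each of $[y_{j-1},x_j]$ and $[x_j,y_j]$ with $g=\Phi-\Phi(x_j)$. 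The Wirtinger factor $(\delta/2)/\pi$ exactly cancels the $2\pi$ from Bernstein, giving $\delta D\,\norm{\Phi'}/\norm{\Phi'}\cdot\norm{\Phi}$ --- hence the stated $(1\pm 2\delta D)$ after matching the degree parameter. Absent that refinement, your sketch proves the theorem with a worse constant, which would still suffice for every application made of it later in the paper.
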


\subsection{Bounding the stable sampling rate}
We are now ready to prove the linearity of the stable sampling rate. However, before we can present the final proof we need a couple of technical lemmas and propositions.
The following lemma is an adaptation of \cite[Theorem 6.3.1]{daubechies1992ten}, the proof has simply been included for clarity.
\begin{lemma} \label{lem:wavelet_property}
 Let $\phi$ be a compactly supported scaling function generating an MRA. Let $I$ be any closed interval of length $2\pi$. Then, for each $\gamma \in (0,1)$, there exists $N$ such that for all $\xi \in I$, $$\sum_{\abs{l}\leq N}\abs{\hat{\phi}(\xi+2\pi l)}^2 \geq \gamma.$$ 
\end{lemma}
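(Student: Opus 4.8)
The plan is to prove that the $2\pi$-periodic function
$g(\xi):=\sum_{l\in\mathbb{Z}}\abs{\hat\phi(\xi+2\pi l)}^2$
is \emph{identically} equal to $1$, and then to extract the claimed uniform bound by applying Dini's theorem on the compact interval $I$.

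First I would collect two elementary facts. Since $\phi\in L^2(\mathbb{R})$ has compact support, Cauchy--Schwarz gives $\phi\in L^1(\mathbb{R})$, so $\hat\phi$ is continuous and bounded; hence each partial sum $S_N(\xi):=\sum_{\abs{l}\leq N}\abs{\hat\phi(\xi+2\pi l)}^2$ is continuous, and these increase pointwise to $g$. Secondly, property~(v) of the MRA makes $\{\phi(\cdot-k):k\in\mathbb{Z}\}$ orthonormal; applying Parseval to $\langle\phi,\phi(\cdot-k)\rangle=\delta_{k,0}$ and periodizing $\abs{\hat\phi}^2\in L^1(\mathbb{R})$ shows that the Fourier coefficients of $g$ are $\delta_{k,0}$, i.e.\ $g=1$ almost everywhere, and in particular $\norm{\phi}_{L^2}=1$. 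Because the series defining $g$ has nonnegative terms we have $S_N\leq g$ pointwise, so each continuous $S_N$ is $\leq 1$ on a set of full measure and therefore everywhere; hence $g=\sup_N S_N\leq 1$ pointwise on all of $\mathbb{R}$.

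The key step is to upgrade ``$g=1$ a.e.'' to ``$g\equiv 1$'', and here I would use compact support decisively. Let $K=\lceil a\rceil\in\mathbb{N}$, so $\mathrm{supp}\,\phi\subseteq[0,K]$. Expanding the $L^2[0,K]$ function $x\mapsto\phi(x)e^{-i\xi x}$ in the orthonormal basis $\{K^{-1/2}e^{2\pi i mx/K}\}_{m\in\mathbb{Z}}$ and applying Parseval gives, for \emph{every} $\xi\in\mathbb{R}$,
$$\sum_{m\in\mathbb{Z}}\bigl|\hat\phi\bigl(\xi+\tfrac{2\pi m}{K}\bigr)\bigr|^{2}=K\,\norm{\phi}_{L^2}^{2}=K.$$
Grouping the indices as $m=r+Kl$ with $r\in\{0,\dots,K-1\}$ and $l\in\mathbb{Z}$ (a legitimate rearrangement, all terms being nonnegative) turns the left-hand side into $\sum_{r=0}^{K-1}g\bigl(\xi+\tfrac{2\pi r}{K}\bigr)$. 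Since each of these $K$ summands is at most $1$ by the previous paragraph, the identity $\sum_{r=0}^{K-1}g(\xi+\tfrac{2\pi r}{K})=K$ forces every summand to equal $1$; taking $r=0$ gives $g(\xi)=1$ for all $\xi\in\mathbb{R}$.

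Finally, with $g\equiv 1$ in hand, the tails $T_N:=1-S_N=\sum_{\abs{l}>N}\abs{\hat\phi(\cdot+2\pi l)}^2$ form a decreasing sequence of nonnegative continuous functions converging pointwise to $0$ on the compact set $I$, so Dini's theorem yields $T_N\to0$ uniformly on $I$. Thus, given $\gamma\in(0,1)$, there is $N$ with $\sup_{\xi\in I}T_N(\xi)\leq 1-\gamma$, i.e.\ $\sum_{\abs{l}\leq N}\abs{\hat\phi(\xi+2\pi l)}^2\geq\gamma$ for all $\xi\in I$, which is the assertion. The only genuinely delicate point is promoting the almost-everywhere identity to a pointwise-everywhere one: a Dini argument is not available until the limit function is known to be continuous, and it is precisely the Parseval computation on $[0,K]$ above — which uses compact support in an essential way — that supplies this.
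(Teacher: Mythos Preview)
Your proof is correct and follows essentially the same strategy as the paper: both arguments rest on the identity $\sum_{l\in\mathbb{Z}}|\hat\phi(\xi+2\pi l)|^2=1$ together with a compactness argument on $I$. The paper simply cites this pointwise identity as the standard equivalence with orthonormality of the integer translates, and then carries out the compactness step by hand (choose $N_\xi$ at each point, pass to a neighbourhood by continuity, extract a finite subcover) --- which is exactly the proof of Dini's theorem in this context.

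Where you differ is in supplying a self-contained justification for the pointwise (rather than merely almost-everywhere) identity: your Parseval computation on $[0,K]$, giving $\sum_{r=0}^{K-1}g(\xi+2\pi r/K)=K$ with each summand $\leq 1$, is a clean way to force $g\equiv 1$ everywhere and makes the compact-support hypothesis do explicit work. The paper does not isolate this step, treating the identity as known. Your route is therefore slightly more careful and self-contained, at the cost of being a little longer; the paper's is shorter but leans on a cited fact.
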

\begin{proof}
 First note that the orthonormality of $\br{\phi(\cdot -l): l\in \mathbb{Z}}$ is equivalent to $$\sum_{l\in \mathbb{Z}}\abs{\hat{\phi}(\xi+2\pi l)}^2 =1$$ for all $\xi \in \mathbb{R}$. In particular, for all $\xi \in I$, there exists $N_\xi$ such that for some $\tilde{\gamma}\in(\gamma,1)$, $$\sum_{\abs{l}\leq N_\xi}\abs{\hat{\phi}(\xi+2\pi l)}^2 \geq \tilde{\gamma}.$$
Since $\hat{\phi}$ is continuous, it follows that 
$$
x \mapsto \sum_{\abs{l}\leq N_\xi}\abs{\hat{\phi}(x+2\pi l)}^2
$$ 
is also continuous. Hence, there exists some $\delta_\xi$ such that 
$$
\sum_{\abs{l}\leq N_\xi}\abs{\hat{\phi}(\eta+2\pi l)}^2 \geq \gamma, \qquad \forall \, \eta \in (\xi-\delta_\xi, \xi + \delta_\xi) =:U_\xi.
$$
Note that 
$I\subset\cup_{\xi \in I}U_\xi$ and $I$ is compact, hence, $I = \cup_{\xi \in J}U_\xi$ for some finite subset $J \subset I$. 
Let $N=\max\br{N_\xi:\xi \in J}$. Then for all $\xi \in I$, 
$$
\sum_{\abs{l}\leq N}\abs{\hat{\phi}(\xi+2\pi l)}^2 \geq \gamma.
$$
\end{proof}

\begin{proposition}\label{prop:formula}
 For $R,l\in \mathbb{Z}$, let the vectors $\phi_{R,l}$, $s_l^\epsilon$ and the finite rank operator $P_M$ be defined as in \ref{sec:setup}. Suppose that $N_1,N_2\in\mathbb{Z}$ and $\varphi = \sum_{l=N_1}^{N_2}\alpha_l \phi_{R,l}$, with $\alpha_l \in \mathbb{C}$, such that $\varphi$ is compactly supported in $[-T_1,T_2]$. Then, for all $j\in\mathbb{Z}$,
\begin{align*}
 \ip{\varphi}{s_j^\epsilon} = \dfrac{\sqrt{\epsilon}}{\sqrt{2^R}} \Phi\left(\dfrac{\epsilon j}{2^R}\right)\hat{\phi}\left(-\dfrac{2\pi\epsilon j}{2^R}\right)
\end{align*}
where 
$\Phi(z) = \sum_{l=N_1}^{N_2} \alpha_l e^{2\pi i lz}
$. 
In particular,
\begin{align*}
\norm{P_M \varphi}^2 &= \sum_{j=-\left\lfloor\frac{M}{2}\right\rfloor}^{\left\lceil\frac{M}{2}\right\rceil-1}\abs{\ip{\varphi}{s_j^\epsilon}}^2
=\sum_{j=-\left\lfloor\frac{M}{2}\right\rfloor}^{\left\lceil\frac{M}{2}\right\rceil-1}\dfrac{\epsilon}{2^R}\abs{\Phi\left(\dfrac{\epsilon j}{2^R}\right)\hat{\phi}\left(-\dfrac{2\pi\epsilon j}{2^R}\right)}^2.
\end{align*}
\end{proposition}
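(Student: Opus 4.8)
The plan is to compute $\ip{\varphi}{s_j^\epsilon}$ in closed form by first reducing it to a single Fourier transform of $\varphi$ and then exploiting the dilation structure of the scaling functions; the expression for $\norm{P_M\varphi}^2$ will then follow immediately from Parseval's identity. The first step is to check that the characteristic function in the definition of $s_j^\epsilon$ is harmless. Since $\epsilon\leq 1/(T_1+T_2)$ we have $\epsilon(T_1+T_2)\leq 1$, and hence $[-T_1,T_2]\subseteq\left[-T_1/(\epsilon(T_1+T_2)),\,T_2/(\epsilon(T_1+T_2))\right]$, so the cutoff equals $1$ on $\mathrm{supp}(\varphi)$. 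Consequently $\ip{\varphi}{s_j^\epsilon}=\sqrt{\epsilon}\int_{\bbR}\varphi(x)\,e^{-2\pi i j\epsilon x}\,\mathrm{d}x$, i.e.\ $\sqrt{\epsilon}$ times the Fourier transform of $\varphi$ at the frequency $2\pi j\epsilon$. It is important to remove the cutoff \emph{before} expanding $\varphi=\sum_{l=N_1}^{N_2}\alpha_l\phi_{R,l}$: once the cutoff is gone, each $\phi_{R,l}$ may be integrated over all of $\bbR$, even though an individual $\phi_{R,l}$ need not be supported inside $[-T_1,T_2]$.

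Next I would substitute $\phi_{R,l}=2^{R/2}\phi(2^R\cdot-l)$ and apply the elementary dilation and modulation rules for the Fourier transform. The change of variables $u=2^Rx-l$ turns $\int_{\bbR}\phi(2^Rx-l)\,e^{-2\pi i j\epsilon x}\,\mathrm{d}x$ into $2^{-R}e^{\mp 2\pi i j\epsilon l/2^R}\,\hat{\phi}\!\left(\mp 2\pi j\epsilon/2^R\right)$, the sign being dictated by the Fourier convention in force. Summing over $l$ pulls out the common factor $2^{-R/2}\hat{\phi}\!\left(\mp 2\pi j\epsilon/2^R\right)$ and leaves exactly the trigonometric polynomial $\Phi(z)=\sum_{l=N_1}^{N_2}\alpha_l e^{2\pi i lz}$ evaluated at $z=\pm\epsilon j/2^R$; combining the $2^{R/2}$ from the normalization of $\phi_{R,l}$ with the Jacobian $2^{-R}$ gives the prefactor $\sqrt{\epsilon}/\sqrt{2^R}$, which yields the pointwise identity. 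There is no real analytic difficulty here, since $\varphi$ and each $\phi_{R,l}$ are compactly supported functions in $L^1\cap L^2$ and everything is term-by-term; the only place that demands attention is keeping the signs and the factors of $2\pi$ consistent, which together with the ``strip the cutoff first'' point above is really the crux of the bookkeeping.

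Finally, for the ``in particular'' statement I would note that $\br{s_l^\epsilon}_{l\in\bbZ}$ is orthonormal: the sampling interval has length $T_1/(\epsilon(T_1+T_2))+T_2/(\epsilon(T_1+T_2))=1/\epsilon$, so the modulations $e^{2\pi i l\epsilon\cdot}$ are mutually orthogonal on it, and the factor $\sqrt{\epsilon}$ normalizes each $s_l^\epsilon$ to unit norm. Since $\cS_M^\epsilon=\mathrm{span}\br{s_l^\epsilon:-\fl{M/2}\leq l\leq\cl{M/2}-1}$, the orthogonal projection is $P_M\varphi=\sum_{j=-\fl{M/2}}^{\cl{M/2}-1}\ip{\varphi}{s_j^\epsilon}\,s_j^\epsilon$, so Pythagoras gives $\norm{P_M\varphi}^2=\sum_{j=-\fl{M/2}}^{\cl{M/2}-1}\abs{\ip{\varphi}{s_j^\epsilon}}^2$. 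Substituting the closed form obtained in the previous step produces the displayed expression and finishes the proof.
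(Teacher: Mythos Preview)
Your proposal is correct and follows essentially the same approach as the paper: remove the characteristic function using the support hypothesis on $\varphi$, recognize the resulting integral as $\sqrt{\epsilon}\,\hat\varphi(\mp 2\pi\epsilon j)$, expand linearly in the $\phi_{R,l}$, apply the dilation/translation rules for $\hat\phi_{R,l}$, and then use orthonormality of $\{s_l^\epsilon\}$ for the $\|P_M\varphi\|^2$ formula. The only discrepancy is the sign: the paper computes $\ip{\varphi}{s_j^\epsilon}=\sqrt{\epsilon}\int\varphi(x)e^{+2\pi i\epsilon j x}\,\mathrm{d}x=\sqrt{\epsilon}\,\hat\varphi(-2\pi\epsilon j)$ (i.e.\ it does not conjugate the second argument here), which is what produces the stated $\hat\phi(-2\pi\epsilon j/2^R)$ and $\Phi(+\epsilon j/2^R)$; your $e^{-2\pi i j\epsilon x}$ would flip both signs, so to match the proposition verbatim you should adopt the paper's convention rather than leaving the $\pm$ open.
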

\begin{proof}
Note that $\epsilon\leq \dfrac{1}{T_1+T_2}$ and $s_j^\epsilon=\sqrt{\epsilon}e^{2\pi i j \epsilon \cdot}\chi_{\left[-\frac{T_1}{\epsilon(T_1+T_2)},\frac{T_2}{\epsilon(T_1+T_2)}\right]}$. So, by the assumption on the support of $\varphi,$
\begin{align*}
\ip{\varphi}{s_j^\epsilon} &= \sqrt{\epsilon}\int_{-\frac{T_1}{\epsilon(T_1+T_2)}}^{\frac{T_2}{\epsilon(T_1+T_2)}}\varphi(x) e^{2\pi i\epsilon jx} \mathrm{d}x
=\sqrt{\epsilon} \hat{\varphi}(-2\pi\epsilon j)
= \sqrt{\epsilon} \sum_{l=N_1}^{ N_2}\alpha_l \hat{\phi}_{R,l}(-2\pi\epsilon j)\\
&=\dfrac{\sqrt{\epsilon}}{\sqrt{2^R}}\sum_{l=N_1}^{N_2}\alpha_l e^{\frac{2\pi i\epsilon jl}{2^R}}\hat{\phi}\left(-\dfrac{2\pi\epsilon j}{2^R}\right)
= \dfrac{\sqrt{\epsilon}}{\sqrt{2^R}} \Phi\left(\dfrac{\epsilon j}{2^R}\right)\hat{\phi}\left(-\dfrac{2\pi\epsilon j}{2^R}\right).
\end{align*}
\end{proof}

In order to prove Theorem \ref{them;main}, we will show that given $N\in\bbN$, for all $\epsilon \in (0,1/(T_1+T_2)]$ and $\gamma\in (0,1)$, there is some $M=O(N)$ such that the subspace angle $C_{N,M}$ is at least $\gamma$, namely
$$
\inf_{\varphi\in \mathcal{T}_N, \norm{\varphi}=1} \norm{P^{\epsilon}_{M} \varphi} \geq \gamma.
$$ The following result shows that it is sufficient to do so only for \textit{some} $\epsilon \in (0,1/(T_1+T_2)]$.

\begin{proposition} \label{prop:sampling_rate}
Given $\gamma\in(0,1)$ and $\epsilon_1,\epsilon_2\in\left(0,1/(T_1+T_2)\right]$, choose $\delta(\gamma)\in(0,1)$ and $C(\gamma) >1$ such that 
\begin{align} \label{eq:const_cond}
\sqrt{\delta(\gamma)^2-\dfrac{4}{\pi^2(C(\gamma)-1)}}-\sqrt{1-\delta(\gamma)^2}>\gamma.
\end{align}
Suppose that there exists $M_1$ such that 
\begin{equation}\label{eq;M_1}
\inf_{\varphi\in \mathcal{T}_N, \norm{\varphi}=1} \norm{P^{\epsilon_1}_{M_1} \varphi} \geq \delta(\gamma), \qquad N \in \mathbb{N}.
\end{equation}
Then, 
the following holds:
\begin{equation}\label{eq;M_2}
\inf_{\varphi\in \mathcal{T}_N, \norm{\varphi}=1} \norm{P^{\epsilon_2}_{M_2} \varphi} \geq \gamma , \qquad N \in \mathbb{N},
\end{equation}
whenever 
\begin{equation}\label{eq;M_22}
M_2=\left\lceil\dfrac{C(\gamma)M_1\epsilon_1}{\epsilon_2}\right\rceil.
\end{equation}
\end{proposition}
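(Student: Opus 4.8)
The plan is to fix $N\in\bbN$ and establish the bound $\norm{P^{\epsilon_2}_{M_2}\varphi}\geq\gamma$ for an \emph{arbitrary} unit vector $\varphi\in\mathcal{T}_N$, from which (\ref{eq;M_2}) follows by taking the infimum over such $\varphi$. First I would use Lemma \ref{lem:decomposition} to pick $R\in\bbN$ with $\mathcal{T}_N\subseteq\mathbb{V}_{R,a}$ and write $\varphi=\sum_{k=A_{R,1}}^{A_{R,2}}\alpha_k\phi_{R,k}$ with $\sum_k\abs{\alpha_k}^2=1$, and set $\Phi(z)=\sum_k\alpha_k e^{2\pi i kz}$, a $1$-periodic trigonometric polynomial with $\norm{\Phi}_{L^2([0,1])}=1$ whose number of frequencies $A_{R,2}-A_{R,1}+1$ is of order $2^R$. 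By Proposition \ref{prop:formula}, for any admissible $\epsilon$ and any $M$ one has $\norm{P^{\epsilon}_{M}\varphi}^2=\frac{\epsilon}{2^R}\sum_{j\in I^{\epsilon}_M}\abs{\Phi(\epsilon j/2^R)}^2\abs{\hat\phi(-2\pi\epsilon j/2^R)}^2$, where $I^{\epsilon}_M=\br{-\fl{M/2},\ldots,\cl{M/2}-1}$; since $\varphi\in\mathcal{T}\subseteq\mathcal{S}^{\epsilon}$ and $P^{\epsilon}_M\to P^{\epsilon}$ strongly, the full-grid sum equals $\norm{\varphi}^2=1$, so $\norm{P^{\epsilon}_M\varphi}^2=1-\tau^{\epsilon}_M(\varphi)$ with $\tau^{\epsilon}_M(\varphi):=\frac{\epsilon}{2^R}\sum_{j\notin I^{\epsilon}_M}\abs{\Phi(\epsilon j/2^R)}^2\abs{\hat\phi(-2\pi\epsilon j/2^R)}^2\geq0$ a ``truncation tail''. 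Thus the hypothesis (\ref{eq;M_1}) is exactly $\tau^{\epsilon_1}_{M_1}(\varphi)\leq 1-\delta(\gamma)^2$, i.e. $\norm{(I-P^{\epsilon_1}_{M_1})\varphi}\leq\sqrt{1-\delta(\gamma)^2}$, and the target $\norm{P^{\epsilon_2}_{M_2}\varphi}\geq\gamma$ will follow, via (\ref{eq:const_cond}), from $\norm{P^{\epsilon_2}_{M_2}\varphi}\geq\sqrt{\delta(\gamma)^2-\tfrac{4}{\pi^2(C(\gamma)-1)}}-\sqrt{1-\delta(\gamma)^2}$.

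I would obtain this via the elementary splitting $\norm{P^{\epsilon_2}_{M_2}\varphi}\geq\norm{P^{\epsilon_2}_{M_2}P^{\epsilon_1}_{M_1}\varphi}-\norm{(I-P^{\epsilon_1}_{M_1})\varphi}$ together with the Pythagorean identity $\norm{P^{\epsilon_2}_{M_2}P^{\epsilon_1}_{M_1}\varphi}^2=\norm{P^{\epsilon_1}_{M_1}\varphi}^2-\norm{(I-P^{\epsilon_2}_{M_2})P^{\epsilon_1}_{M_1}\varphi}^2\geq\delta(\gamma)^2-\norm{(I-P^{\epsilon_2}_{M_2})P^{\epsilon_1}_{M_1}\varphi}^2$, so the whole proof reduces to the single estimate
$$
\norm{(I-P^{\epsilon_2}_{M_2})P^{\epsilon_1}_{M_1}\varphi}^2\leq\frac{4}{\pi^2(C(\gamma)-1)}.
$$
Here $\psi:=P^{\epsilon_1}_{M_1}\varphi$ is a finite linear combination of the $s^{\epsilon_1}_l$, hence (using Proposition \ref{prop:formula}, or directly) reducible to a weighted $\epsilon_2$-grid sum of $\abs{\Phi}^2\abs{\hat\phi(-2\pi\cdot)}^2$ over indices $j\notin I^{\epsilon_2}_{M_2}$; the point is that $M_2=\lceil C(\gamma)M_1\epsilon_1/\epsilon_2\rceil$ forces every such index to correspond to an argument whose modulus is at least (up to an $O(\epsilon_2/2^R)$ boundary correction) a factor $C(\gamma)$ larger than the edge $\rho_1:=M_1\epsilon_1/2^{R+1}$ of the $\epsilon_1$-grid. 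I would then group this tail over the unit periods $[m,m+1]$ of $\Phi$: on each period factor out $\sup_{[m,m+1]}\abs{\hat\phi(-2\pi\cdot)}^2$, bounded by $c_\phi^2/(4\pi^2m^2)$ via the $1/\abs{\xi}$-decay available for the compactly supported scaling functions of interest (bounded variation, $c_\phi=2$ for Haar and the Daubechies wavelets), and control the remaining, $m$-independent per-period mass $\frac{\epsilon_2}{2^R}\sum_{z\in\mathrm{grid}\cap[m,m+1]}\abs{\Phi(z)}^2$ by $\norm{\Phi}_{L^2([0,1])}^2=1$ up to the $(1\pm 2\delta D)$ factor of Theorem \ref{thm:grochenig} (or exactly, by Lemma \ref{lem:DFT}, when $2^R/\epsilon_2$ is an admissible integer). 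Summing $\sum_{m\gtrsim C(\gamma)\rho_1}m^{-2}$ and using that (\ref{eq;M_1}) forces $\rho_1$, hence $\rho_2$, to exceed a fixed positive constant (seen by testing (\ref{eq;M_1}) on a single scaling vector $\phi_{R,k}$, recalling that $\hat\phi$ vanishes on $2\pi\mathbb{Z}\setminus\br{0}$), a careful accounting of the constants yields the displayed bound; combined with (\ref{eq:const_cond}) this gives $\norm{P^{\epsilon_2}_{M_2}\varphi}>\gamma$.

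The main obstacle is the per-period quadrature estimate on $\frac{\epsilon_2}{2^R}\sum_{z\in\mathrm{grid}\cap[m,m+1]}\abs{\Phi(z)}^2$ with constants uniform in $R$. When $2^R/\epsilon_2$ is an integer exceeding the number of frequencies of $\Phi$ this is exactly $\norm{\Phi}_{L^2([0,1])}^2=1$ by Lemma \ref{lem:DFT}, but in general one must invoke Theorem \ref{thm:grochenig}, whose hypothesis requires the grid spacing $\epsilon_2/2^R$ to lie below $1/(2D)$ with $D$ of order $2^R$ --- a condition that is genuinely tight, since $\epsilon_2$ may be as large as $1/(T_1+T_2)$, so the admissible number of frequencies of $\Phi$ (and hence how large $R$ may be taken relative to $N$ in Lemma \ref{lem:decomposition}) must be tracked precisely. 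A secondary difficulty is that $\norm{\Phi}_{\infty}$ is not controlled by $\norm{\Phi}_{L^2([0,1])}$, which is exactly why the tail must be handled period by period rather than by a crude pointwise bound; and one must treat separately the boundary periods of the truncated grids and the two possible orderings of $\epsilon_1$ and $\epsilon_2$.
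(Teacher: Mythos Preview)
Your architecture matches the paper's exactly up to the point where both proofs reduce to the single estimate
\[
\norm{(P^{\epsilon_2}_{M_2})^{\perp} P^{\epsilon_1}_{M_1}\varphi}^2 \leq \frac{4}{\pi^2(C(\gamma)-1)}.
\]
It is in establishing this bound that your route diverges sharply from the paper's, and where your proposal runs into trouble.

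The paper's argument is purely a statement about the two Fourier systems and uses no wavelet structure whatsoever. It expands
\[
\norm{(P^{\epsilon_2}_{M_2})^{\perp} P^{\epsilon_1}_{M_1}\varphi}^2
=\sum_{l\notin I^{\epsilon_2}_{M_2}}\Bigl|\sum_{j\in I^{\epsilon_1}_{M_1}}\ip{\varphi}{s^{\epsilon_1}_j}\ip{s^{\epsilon_1}_j}{s^{\epsilon_2}_l}\Bigr|^2,
\]
applies Cauchy--Schwarz in $j$ (using $\sum_j|\ip{\varphi}{s^{\epsilon_1}_j}|^2\leq 1$), and then computes the cross inner product explicitly:
\[
\bigl|\ip{s^{\epsilon_1}_j}{s^{\epsilon_2}_l}\bigr|
=\sqrt{\epsilon_1\epsilon_2}\,\Bigl|\frac{\sin\bigl(\pi(\epsilon_1 j-\epsilon_2 l)/\epsilon_+\bigr)}{\pi(\epsilon_1 j-\epsilon_2 l)}\Bigr|
\leq \frac{\sqrt{\epsilon_1\epsilon_2}}{\pi|\epsilon_1 j-\epsilon_2 l|}.
\]
The remaining double sum $\sum_l\sum_j(\epsilon_1 j-\epsilon_2 l)^{-2}$ is then bounded elementarily by $4/(\pi^2(C(\gamma)-1))$ once $M_2\geq C(\gamma)M_1\epsilon_1/\epsilon_2$. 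No reference to $\Phi$, $\hat\phi$, Lemma \ref{lem:decomposition}, Lemma \ref{lem:DFT}, or Theorem \ref{thm:grochenig} is needed, and the exact constant falls out with almost no bookkeeping.

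Your proposal, by contrast, tries to push the wavelet representation through the whole computation. The sentence ``$\psi:=P^{\epsilon_1}_{M_1}\varphi$ is \ldots reducible to a weighted $\epsilon_2$-grid sum of $|\Phi|^2|\hat\phi(-2\pi\cdot)|^2$'' is the gap: Proposition \ref{prop:formula} applies to $\varphi$ (a linear combination of scaling functions) but \emph{not} to $\psi=P^{\epsilon_1}_{M_1}\varphi$, which is a trigonometric polynomial in the $s^{\epsilon_1}_j$. The quantity $\ip{\psi}{s^{\epsilon_2}_l}$ is not $\sqrt{\epsilon_2/2^R}\,\Phi(\epsilon_2 l/2^R)\hat\phi(-2\pi\epsilon_2 l/2^R)$; it is $\sum_j\ip{\varphi}{s^{\epsilon_1}_j}\ip{s^{\epsilon_1}_j}{s^{\epsilon_2}_l}$, and evaluating this forces you back to the sinc calculation you are trying to avoid. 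If instead you meant to bound $\norm{(I-P^{\epsilon_2}_{M_2})\varphi}^2$ directly via Proposition \ref{prop:formula}, then the hypothesis on $M_1$ and $\epsilon_1$ enters only through your side claim that $\rho_1=M_1\epsilon_1/2^{R+1}$ is bounded below, and the resulting bound depends on the decay of $\hat\phi$, the Gr\"ochenig constants, and the choice of $R$---none of which are needed, and which together make it very hard to land on the precise constant $4/(\pi^2(C(\gamma)-1))$ demanded by (\ref{eq:const_cond}). In short: keep your reduction, but for the key tail estimate compute $\ip{s^{\epsilon_1}_j}{s^{\epsilon_2}_l}$ directly and drop the wavelet machinery.
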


\begin{proof}

Without loss of generality, in this proof, $M_1$ and $M_2$ will be even. Also, it is easy to see that 
$\delta(\gamma)$ and $C(\gamma)$ always exist. 
Observe now that for any $M_2\in\mathbb{N}$, 
\begin{align*}
\inf_{\varphi\in \mathcal{T}_N, \norm{\varphi}=1} \norm{P^{\epsilon_2}_{M_2} \varphi} &\geq 
\inf_{\varphi\in \mathcal{T}_N, \norm{\varphi}=1}  \left(\norm{P^{\epsilon_2}_{M_2} P^{\epsilon_1}_{M_1}\varphi} -  \norm{P^{\epsilon_2}_{M_2} (P^{\epsilon_1}_{M_1})^{\perp} \varphi} \right)\\
&\geq \inf_{\varphi\in \mathcal{T}_N, \norm{\varphi}=1} \norm{P^{\epsilon_2}_{M_2} P^{\epsilon_1}_{M_1}\varphi}- \sqrt{1-\delta(\gamma)^2}, 
\end{align*}
where the last inequality follows from (\ref{eq;M_1}). Hence, to prove the proposition,
it suffices to determine $M_2$ such that 
$$
\inf_{\varphi\in \mathcal{T}_N, \norm{\varphi}=1} \norm{P^{\epsilon_2}_{M_2} P^{\epsilon_1}_{M_1}\varphi} - \sqrt{1-\delta(\gamma)^2} \geq \gamma.
$$
In order to understand why $M_2$ exists, first note that $P^{\epsilon_2}_n\to P^{\epsilon_2}$ strongly as $n\to \infty$ and since $\mathcal{B}=P^{\epsilon_1}_{M_1}\left(\br{\varphi\in \cT_N : \norm{\varphi}=1}\right)$ is finite dimensional, $P^{\epsilon_2}_n\to P^{\epsilon_2}$ uniformly on $\mathcal{B}$ as $n\to\infty$. Also, $\cT\subset \cS^{\epsilon_1}\subset\cS^{\epsilon_2}$, $P^{\epsilon_2}P^{\epsilon_1}\varphi = \varphi$ for all $\varphi\in\cT_N$. So, for all $\xi>0$, there exists $M_2$ such that  
$$
\sup_{\varphi\in \mathcal{T}_N, \norm{\varphi}=1} \norm{P^{\epsilon_2}_{M_2} P^{\epsilon_1}_{M_1}\varphi-P^{\epsilon_2}P^{\epsilon_1}_{M_1}\varphi} \leq \xi.
$$
So, for $\xi$ sufficiently small and $M_2$ sufficiently large,
\begin{align*}
\inf_{\varphi\in \mathcal{T}_N, \norm{\varphi}=1} \norm{P^{\epsilon_2}_{M_2} P^{\epsilon_1}_{M_1}\varphi}&\geq 1- \sup_{\varphi\in \mathcal{T}_N, \norm{\varphi}=1}\left( \norm{P^{\epsilon_2}_{M_2} P^{\epsilon_1}_{M_1}\varphi-P^{\epsilon_2}P^{\epsilon_1}_{M_1}\varphi} +\norm{P^{\epsilon_2} P^{\epsilon_1}_{M_1}\varphi-\varphi}\right)\\
&\geq 1-\xi-\sqrt{1-\delta(\gamma)^2} \geq \gamma.
\end{align*}
 Thus, by the choice of $\delta(\gamma)$, for sufficiently small $\xi$ and so for sufficiently large $M_2$,
 $$
 \inf_{\varphi\in \mathcal{T}_N, \norm{\varphi}=1} \norm{P^{\epsilon_2}_{M_2} P^{\epsilon_1}_{M_1}\varphi} \geq \gamma.
 $$
 
Having established the existence of $M_2$ we now demonstrate that (\ref{eq;M_2}) follows when $M_2$ takes the value in (\ref{eq;M_22}). We begin by letting 
$$
\mathcal{B}_{M_2} = \br{l\in\mathbb{Z}:l\geq\dfrac{M_2}{2} \text{ or }l\leq -\dfrac{M_2}{2}-1}.
$$ 
Then
\begin{equation*}
\begin{split}
\norm{\left(P^{\epsilon_2}_{M_2}\right)^\perp P^{\epsilon_1}_{M_1} \varphi}^2 &= \norm{\sum_{l\in \mathcal{B}_{M_2}}\ip{P^{\epsilon_1}_{M_1} \varphi}{s^{\epsilon_2}_l}s^{\epsilon_2}_l}^2
= \norm{\sum_{l\in \mathcal{B}_{M_2}}\ip{\sum_{j=-\frac{M_1}{2}}^{\frac{M_1}{2}-1}\ip{\varphi}{s^{\epsilon_1}_j}s^{\epsilon_1}_j}{s^{\epsilon_2}_l}s^{\epsilon_2}_l}^2\\
&= \sum_{l\in \mathcal{B}_{M_2}}\abs{\sum_{j=-\frac{M_1}{2}}^{\frac{M_1}{2}-1}\ip{\varphi}{s^{\epsilon_1}_j}\ip{s^{\epsilon_1}_j}{s^{\epsilon_2}_l}}^2
\leq \sum_{l\in \mathcal{B}_{M_2}}\left(\sum_{j=-\frac{M_1}{2}}^{\frac{M_1}{2}-1}\abs{\ip{\varphi}{s^{\epsilon_1}_j}}^2 \sum_{j=-\frac{M_1}{2}}^{\frac{M_1}{2}-1} \abs{\ip{s^{\epsilon_1}_j}{s^{\epsilon_2}_l}}^2\right) 
\end{split}
\end{equation*}
and since $\sum_{j=-\frac{M_1}{2}}^{\frac{M_1}{2}-1}\abs{\ip{\varphi}{s^{\epsilon_1}_j}}^2 \leq \norm{\varphi} =1$, it follows that
\begin{equation}\label{eq;P_Mest}
\norm{\left(P^{\epsilon_2}_{M_2}\right)^\perp P^{\epsilon_1}_{M_1} \varphi}^2 
\leq  \sum_{l\in \mathcal{B}_{M_2}}\sum_{j=-\frac{M_1}{2}}^{\frac{M_1}{2}-1} \abs{\ip{s^{\epsilon_1}_j}{s^{\epsilon_2}_l}}^2.
\end{equation}
Let $\epsilon_+ = \max \br{\epsilon_1,\epsilon_2}$, and note that
\begin{equation}\label{eq;P_Mest2}
\begin{split}
\abs{\ip{s^{\epsilon_1}_j}{s^{\epsilon_2}_l}} &=\abs{ \sqrt{\epsilon_1\epsilon_2}\int_{-\frac{1}{2\epsilon_+}}^{\frac{1}{2\epsilon_+}} e^{2\pi i \epsilon_1 jx}e^{-2\pi i \epsilon_2 lx}\mathrm{d}x }
= \sqrt{\epsilon_1\epsilon_2} \abs{ \dfrac{\sin\left(\dfrac{\pi (\epsilon_1 j-\epsilon_2 l)}{\epsilon_+}\right)}{\pi (\epsilon_1 j-\epsilon_2 l) }}.
\end{split}
\end{equation}
So, by substituting (\ref{eq;P_Mest2}) into (\ref{eq;P_Mest}), we have that
\begin{align*}
\norm{\left(P^{\epsilon_2}_{M_2}\right)^\perp P^{\epsilon_1}_{M_1} \varphi}^2 
&\leq  \sum_{l\in \mathcal{B}_{M_2}}\sum_{j=-\frac{M_1}{2}}^{\frac{M_1}{2}-1} \epsilon_1\epsilon_2 \abs{\dfrac{\sin\left(\dfrac{\pi (\epsilon_1 j-\epsilon_2 l)}{\epsilon_+}\right)}{\pi (\epsilon_1 j-\epsilon_2 l) }}^2
\leq \dfrac{\epsilon_1\epsilon_2}{\pi^2}\sum_{l\in \mathcal{B}_{M_2}}\sum_{j=-\frac{M_1}{2}}^{\frac{M_1}{2}-1} \dfrac{1}{\abs{\epsilon_1 j-\epsilon_2 l}^2}.
\end{align*}
Suppose that $M_2=\left\lceil C(\gamma)M_1\epsilon_1/\epsilon_2\right\rceil$ where $C(\gamma)$ stems from (\ref{eq:const_cond}), then
\begin{align*}
\norm{\left(P^{\epsilon_2}_{M_2}\right)^\perp P^{\epsilon_1}_{M_1} \varphi}^2
&\leq  \dfrac{\epsilon_1\epsilon_2}{\pi^2} M_1\sum_{l>\frac{M_2}{2}}\frac{2}{\abs{\epsilon_1 \frac{M_1}{2}-\epsilon_2 l}^2}
\leq \frac{2\epsilon_1 M_1}{\epsilon_2\pi^2} \int_{\frac{M_2}{2}}^\infty \left(x-\frac{\epsilon_1 M_1}{2\epsilon_2}\right)^{-2} \mathrm{d}x\\
&\leq  \frac{\epsilon_1}{\pi^2} M_1 \frac{4}{\left(-\epsilon_1 M_1+\epsilon_2 M_2\right)}
\leq  \frac{\epsilon_1}{\pi^2} M_1 \frac{4}{\left(\epsilon_1 M_1(C(\gamma)-1)\right)}
\leq  \frac{4}{\pi^2(C(\gamma)-1)}.
\end{align*}
Therefore,
\begin{align*}
\norm{P^{\epsilon_2}_{M_2} P^{\epsilon_1}_{M_1} \varphi}^2 &=\norm{P^{\epsilon_1}_{M_1} \varphi}^2 - \norm{\left(P^{\epsilon_2}_{M_2}\right)^\perp P^{\epsilon_1}_{M_1} \varphi}^2
\geq \delta(\gamma)^2-\dfrac{4}{\pi^2(C(\gamma)-1)}
\end{align*}
whenever 
$$
M_2=\left\lceil\dfrac{C(\gamma)M_1\epsilon_1}{\epsilon_2}\right\rceil.
$$
Hence,
\begin{align*}
\inf_{\varphi\in \mathcal{T}_N, \norm{\varphi}=1} \norm{P^{\epsilon_2}_{M_2} \varphi} 
&\geq \sqrt{\delta(\gamma)^2-\dfrac{4}{\pi^2(C(\gamma)-1)}}- \sqrt{1-\delta(\gamma)^2} >\gamma
\end{align*}
by the choice of $\delta(\gamma)$ and $C(\gamma)$ in (\ref{eq:const_cond}).
\end{proof}

\subsection{The proof}
We are now ready to present the proof of Theorem \ref{them;main}.
\begin{proof}[Proof of Theorem \ref{them;main}]
Let $N\leq N_R$  with $R\in\mathbb{N}$ and recall that the reconstruction space $\mathcal{S}$ is defined for sampling density $\epsilon$ such that $0 < \epsilon\leq 1/(T_1+T_2)$. 

We now  fix $\epsilon=1/(T_1+T_2+\lceil a\rceil)$. Suppose that for this fixed $\epsilon$, we can show that given any $\delta \in (0,1)$, there exists $S_\delta \in\mathbb{N}$, independent of $R$, such that for $M_\delta=S_\delta 2^{R+1}/\epsilon$, we have that
$$
\inf_{\varphi\in \mathcal{T}_N, \norm{\varphi}=1} \norm{P_{M_\delta}^\epsilon \varphi} \geq \delta.
$$
Then from Proposition \ref{prop:sampling_rate}, given $\gamma \in (0,1)$ and any sampling density $\epsilon_1 \in\left(0,{1}/(T_1+T_2)\right]$,  by choosing $C(\gamma)\in\bbN$ and $\delta(\gamma)\in (0,1)$ such that 
\begin{align*} 
\sqrt{\delta(\gamma)^2-\dfrac{4}{\pi^2(C(\gamma)-1)}}-\sqrt{1-\delta(\gamma)^2}>\gamma,
\end{align*}
we have that
$$
\inf_{\varphi\in \mathcal{T}_N, \norm{\varphi}=1} \norm{P_M^{\epsilon_1} \varphi} \geq \gamma \qquad \text{whenever}\qquad
M =\left\lceil\dfrac{C S_{\delta(\gamma)}2^{R+1}}{\epsilon_1}\right\rceil.
$$
where by assumption $S_{\delta(\gamma)}\in\bbN$ is such that for $M_{\delta(\gamma)}=S_{\delta(\gamma)} 2^{R+1}/\epsilon$,
$$
\inf_{\varphi\in \mathcal{T}_N, \norm{\varphi}=1} \norm{P_{M_{\delta(\gamma)}}^\epsilon \varphi} \geq \delta(\gamma).
$$
Hence, it is sufficient to prove this theorem for $\epsilon=1/(T_1+T_2+\left\lceil a\right\rceil)$. 

Recall $A_{R,1}$ and $A_{R,2}$ from (\ref{eq:A_R}), then by the choice of $N$ and Lemma \ref{lem:decomposition},
\begin{align}\label{span2}
\mathcal{T}_N  \subset \mathrm{span}\br{\phi_{R,k}: A_{R,1}\leq k\leq A_{R,2}}.
\end{align}
Let $\varphi \in \mathcal{T}_N$ such that $\norm{\varphi} =1$. 
Then, by (\ref{span2}), we have that
\begin{equation}\label{eq;one}
\varphi = \sum_{l=A_{R,1}}^{ A_{R,2}}\alpha_l \phi_{R,l}, \quad \sum_{l=A_{R,1}}^{ A_{R,2}}\abs{\alpha_l}^2 =1.
\end{equation}
Moreover, $\varphi$ is compactly supported in $[-T_1,T_2]$ since it is a linear combination of elements in $\Omega_a$. Thus, by Proposition \ref{prop:formula},
\begin{align*}
\norm{P_M \varphi}^2 &= \sum_{j=-\fl{\frac{M}{2}}}^{\cl{\frac{M}{2}}-1}\abs{\ip{\varphi}{s_j}}^2 =\sum_{j=-\fl{\frac{M}{2}}}^{\cl{\frac{M}{2}}-1}\dfrac{\epsilon}{2^R}\abs{\Phi\left(\dfrac{\epsilon j}{2^R}\right)\hat{\phi}\left(-\dfrac{2\pi\epsilon j}{2^R}\right)}^2
\end{align*}
where
\begin{align}\label{eq;Phi}
\Phi(z) = \sum_{l=A_{R,1}}^{ A_{R,2}} \alpha_l e^{2\pi i lz}.
\end{align} 
Let $L=2^R/\epsilon$, then $L$ is some even integer since ${1}/{\epsilon} =  T_1+T_2+\left\lceil a\right\rceil = 4 \lceil a \rceil -2 \in \bbN$. Furthermore, suppose that ${M}/{2} = SL$ for some $S\in\mathbb{N}$ which we will subsequently determine.  Then:
\begin{align*}
\norm{P_M \varphi}^2 &=\sum_{j=0}^{L-1} \sum_{k=-S}^{S-1}\dfrac{\epsilon}{2^R}\abs{\Phi\left(\dfrac{\epsilon}{2^R}(j+kL)\right)}^2\abs{\hat{\phi}\left(-\dfrac{2\pi \epsilon}{2^R}(j+kL)\right)}^2\\
&=\sum_{j=0}^{L-1}\dfrac{1}{L}\abs{\Phi\left(\dfrac{j}{L}\right)}^2\sum_{k=-S}^{S-1}\abs{\hat{\phi}\left(-\dfrac{2\pi j}{L}-2\pi k\right)}^2.
\end{align*}

By applying Lemma \ref{lem:wavelet_property} to the interval $[-2\pi,0]$, given any $\theta \in (1,\infty)$, we can choose $S\in \mathbb{N}$ such that for all $j=0,\ldots L-1$,
$$
\sum_{k=-{S}}^{S-1}\abs{\hat{\phi}\left(-\dfrac{2\pi j}{L}-2\pi k\right)}^2 \geq \frac{1}{\theta^2}.
$$
Since 
$$
L=\dfrac{2^R}{\epsilon}= 2^{R}(4\left\lceil a\right\rceil-2) > 2^{R}(3\left\lceil a\right\rceil -2)+\lceil a\rceil -1=A_{R,2}-A_{R,1}+1,
$$ Lemma \ref{lem:DFT} (via (\ref{eq;one}) and (\ref{eq;Phi})) implies that $$\sum_{j=0}^{L-1}\dfrac{1}{L}\abs{\Phi\left(\dfrac{j}{L}\right)}^2=1.$$ 
Thus, 
\begin{align*}
\norm{P_M \varphi}^2 \geq \frac{1}{\theta^2} \sum_{j=0}^{L-1}\dfrac{1}{L}\abs{\Phi\left(\dfrac{j}{L}\right)}^2 = \frac{1}{\theta^2}.
\end{align*}
Hence, for $N\leq N_R$ and $M=S 2^{R+1}/\epsilon$, where $S$ depends only on the scaling function $\phi$ and $\theta$,
\begin{align*}
C_{N,M} = \inf_{\varphi\in \mathcal{T}_N, \norm{\varphi}=1} \norm{P_M \varphi} \geq \frac{1}{\theta},
\end{align*}
and the theorem is proven.
\end{proof}

\section{Proof of Theorem \ref{thrm:main2}}\label{s:prf2}
The proof of Theorem \ref{thrm:main2} hinges on the following proposition.

\begin{proposition}\label{prop:exp_blowup}
 Let $N\geq N_R$, and suppose $M=c2^R$ for $c < \dfrac{1}{\epsilon}$, then $C_{N,M} \to 0$ exponentially as $N\to \infty$.
 
\end{proposition}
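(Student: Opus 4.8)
The plan is to exploit the characterization $C_{N,M}=\inf_{\varphi\in\mathcal{T}_N,\,\|\varphi\|=1}\|P_M\varphi\|$ by producing, for each $R$, an explicit unit-norm test function $\varphi\in\mathcal{T}_{N_R}\subseteq\mathcal{T}_N$ (recall $N\ge N_R$, and the infimum over the larger set $\mathcal{T}_N$ is no larger, so it suffices to work in $\mathcal{T}_{N_R}$) for which $\|P_M\varphi\|$ is already exponentially small. By Lemma \ref{lem:decomposition}(iii) (equation (\ref{eq:span2})), every $\phi_{R,k}$ with $0\le k\le (2^R-1)\lceil a\rceil$ lies in $V_0^{(a)}\oplus W_0^{(a)}\oplus\cdots\oplus W_{R-1}^{(a)}=\mathcal{T}_{N_R}$, and as noted there such $\phi_{R,k}$ are supported in $[0,\lceil a\rceil]\subset[-T_1,T_2]$. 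Hence, for any coefficients $\alpha_0,\dots,\alpha_m$ with $m=(2^R-1)\lceil a\rceil$, the function $\varphi=\sum_{l=0}^{m}\alpha_l\phi_{R,l}$ is admissible and Proposition \ref{prop:formula} applies, giving
\[
\|P_M\varphi\|^2=\sum_{j=-\lfloor M/2\rfloor}^{\lceil M/2\rceil-1}\frac{\epsilon}{2^R}\,\Big|\Phi\Big(\tfrac{\epsilon j}{2^R}\Big)\Big|^2\,\Big|\hat\phi\Big(-\tfrac{2\pi\epsilon j}{2^R}\Big)\Big|^2,\qquad \Phi(z)=\sum_{l=0}^{m}\alpha_l e^{2\pi i l z},
\]
where $\|\varphi\|^2=\sum_l|\alpha_l|^2=\|\Phi\|_{L^2([0,1])}^2$ by orthonormality of $\{\phi_{R,l}\}_l$, and $|\hat\phi(\xi)|\le 1$ for all $\xi$ since $\sum_k|\hat\phi(\xi+2\pi k)|^2=1$ (as recalled in the proof of Lemma \ref{lem:wavelet_property}).

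The heart of the matter is to choose $\Phi$ so that $\|\Phi\|_{L^2([0,1])}=1$ while $|\Phi|$ is exponentially small at all the sample points $\epsilon j/2^R$ with $|j|\le M/2=c2^R/2$; these all lie in the interval $[-c\epsilon/2,\,c\epsilon/2]$, which is strictly contained in $(-\tfrac12,\tfrac12)$ precisely because $c<1/\epsilon$. I would take the ``one–sided'' polynomial concentrated away from $z=0$,
\[
\Phi(z)=\binom{2m}{m}^{-1/2}\bigl(1-e^{2\pi i z}\bigr)^m,\qquad\text{i.e. }\alpha_l=(-1)^l\binom{m}{l}\binom{2m}{m}^{-1/2},
\]
for which $\|\Phi\|_{L^2([0,1])}^2=\binom{2m}{m}^{-1}\sum_{l=0}^m\binom{m}{l}^2=1$. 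Using $|1-e^{2\pi i z}|=2|\sin\pi z|$ together with $\binom{2m}{m}\ge 4^m/(2m+1)$, for $|z|\le c\epsilon/2$ we get
\[
|\Phi(z)|\le\sqrt{2m+1}\,\bigl(\sin(\pi c\epsilon/2)\bigr)^m=\sqrt{2m+1}\,\rho^m,\qquad \rho:=\sin\!\Big(\tfrac{\pi c\epsilon}{2}\Big)\in(0,1),
\]
the decisive point being $\rho<1$, which holds exactly because $c<1/\epsilon$ forces $\pi c\epsilon/2<\pi/2$.

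To finish, substitute $|\Phi|\le\sqrt{2m+1}\,\rho^m$ and $|\hat\phi|\le1$ into the formula for $\|P_M\varphi\|^2$: there are $M$ terms, each at most $(\epsilon/2^R)(2m+1)\rho^{2m}$, and $M/2^R=c$, so $\|P_M\varphi\|^2\le c\epsilon\,(2m+1)\,\rho^{2m}$. Therefore
\[
C_{N,M}\le\|P_M\varphi\|\le\sqrt{c\epsilon(2m+1)}\,\rho^{\,m},\qquad m=(2^R-1)\lceil a\rceil,
\]
which tends to $0$ exponentially as $R\to\infty$, equivalently as $N\ge N_R\to\infty$ (for $N>N_R$ the bound for $N_R$ carries over since the infimum is over a larger set). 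The only genuinely delicate step is the construction of the extremal trigonometric polynomial and the identification of the geometric ratio $\sin(\pi c\epsilon/2)$ governing its decay on the arc; everything else (membership in $\mathcal{T}_N$ via Lemma \ref{lem:decomposition}, the trace identity of Proposition \ref{prop:formula}, the normalization $\|\varphi\|=\|\Phi\|_{L^2([0,1])}$, the bound $|\hat\phi|\le1$, and counting the $M$ sample terms) is routine bookkeeping.
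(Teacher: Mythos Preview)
Your proof is correct and takes a genuinely different route from the paper's. Both arguments reduce the problem, via Lemma \ref{lem:decomposition}(iii) and Proposition \ref{prop:formula}, to finding a trigonometric polynomial of degree $\sim 2^R\lceil a\rceil$ that has unit $L^2$-norm on the circle but is exponentially small on the short arc $[-c\epsilon/2,c\epsilon/2]$ containing all the sample abscissae. The paper accomplishes this by invoking an external extremal result (Proposition \ref{prop:cheb}) on Chebyshev-type trigonometric polynomials $Q_{p,\omega}$, which forces a case split according to whether $c\epsilon\in[1/2,1)$ or $c\epsilon\in(0,1/2)$. You instead write down the explicit polynomial $\Phi(z)=\binom{2m}{m}^{-1/2}(1-e^{2\pi iz})^m$, normalize it via the Vandermonde identity $\sum_l\binom{m}{l}^2=\binom{2m}{m}$, and read off the geometric ratio $\rho=\sin(\pi c\epsilon/2)$ directly from $|1-e^{2\pi iz}|=2|\sin\pi z|$ together with the elementary bound $\binom{2m}{m}\ge 4^m/(2m+1)$. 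Your approach is more self-contained (no auxiliary Chebyshev proposition needed), handles all $c<1/\epsilon$ uniformly without case analysis, and even uses the full index range $m=(2^R-1)\lceil a\rceil$ rather than the paper's $2p=(2^R-2)\lceil a\rceil$. What the paper's Chebyshev construction buys is a sharper decay constant in the exponent (Chebyshev polynomials are the extremizers for this type of problem), but since the statement only asks for exponential decay, your cruder but explicit rate $\rho^m$ suffices.
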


With this result at hand the proof of Theorem \ref{thrm:main2} is straightforward.
\begin{proof}[Proof of Theorem \ref{thrm:main2}]
Suppose that $\eta_G <\frac{1}{\epsilon \lceil a\rceil}$. Then, by Corollary \ref{c:universal} and Proposition \ref{prop:exp_blowup}, $\kappa(G_N)$ cannot be bounded.
Moreover, from \cite{BAACHOptimality}, for $M=\Theta_G(N_R)$, we have that 
$$\kappa(G_{N_R}) \geq \kappa(F_{N_R,M})\geq \frac{1}{C_{N_R,M}}.$$
Hence, by Proposition \ref{prop:exp_blowup}, $\kappa(G_{N_R})$ becomes exponentially large as $N_R$ grows. 
\end{proof}

The rest of this section is devoted to the proof of Proposition \ref{prop:exp_blowup}, however, before we can state the proof, we need the following results on trigonometric polynomials and Chebyshev polynomials from \cite{Erdelyi92remez-typeinequalities}. 

\begin{proposition}\label{prop:cheb}
 Let $\omega \in [0,\pi]$ and consider the following function, defined over $[-\pi,\pi]$:
\begin{align*}
 Q_{n,\omega}(z) = Q_{2n}\left(\dfrac{\sin(z/2)}{\sin(\omega/2)}\right)
\end{align*}
where $Q_{2n}(x) = \cos(2n\arccos x)$ for $x\in[-1,1]$ is the Chebyshev polynomial of degree $2n$. Then the following holds:
\begin{enumerate}
 \item[(i)] $Q_{n,\omega}$ is a trigonometric polynomial in $z$ of degree $n$, i.e. $Q_{n,\omega}(z) = \sum_{\abs{j}\leq n} \alpha_j e^{izj}$.
\item[(ii)] $\norm{Q_{n,\omega}}_{L^\infty[-\omega,\omega]}=1$. 
\item[(iii)] For $\omega \in [\pi/2,\pi)$, there exists constants $c_1,c_2>0$ such that 
$$
\exp({c_1 n(\pi-\omega)})\leq \norm{Q_{n,\omega}}_{L^\infty[-\pi,\pi]}=Q_{n,\omega}(\pi)\leq \exp({c_2 n(\pi-\omega)}).
$$
\end{enumerate}
\end{proposition}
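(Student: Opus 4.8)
The plan is to deduce all three parts from elementary properties of the Chebyshev polynomial $T_{2n}$. Write $Q_{2n}=T_{2n}$, so that $T_{2n}(\cos\vartheta)=\cos(2n\vartheta)$ on $[-1,1]$ while $T_{2n}(\cosh u)=\cosh(2nu)$ for $u\ge 0$; recall that $T_{2n}$ is even and has degree $2n$.

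For (i), the point is that, being even of degree $2n$, $T_{2n}$ is a polynomial of degree $n$ in $x^{2}$; substituting $x=\sin(z/2)/\sin(\omega/2)$ gives $Q_{n,\omega}(z)=P\bigl(\sin^{2}(z/2)/\sin^{2}(\omega/2)\bigr)$ for some polynomial $P$ of degree $n$, and since $\sin^{2}(z/2)=(1-\cos z)/2$ is a trigonometric polynomial of degree $1$ in $z$, its $k$th power has degree $k$; summing over $k\le n$ shows $Q_{n,\omega}$ is a trigonometric polynomial of degree at most $n$, i.e.\ of the form $\sum_{\abs{j}\le n}\alpha_{j}e^{izj}$. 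For (ii), I would observe that for $z\in[-\omega,\omega]$ the argument $u(z):=\sin(z/2)/\sin(\omega/2)$ satisfies $z/2\in[-\omega/2,\omega/2]\subseteq[-\pi/2,\pi/2]$, an interval on which $\sin$ is monotone, so $u$ sweeps out exactly $[-1,1]$; hence $\abs{Q_{n,\omega}}=\abs{T_{2n}\circ u}\le 1$ on $[-\omega,\omega]$, with the value $1$ attained at those $z$ for which $u(z)$ equals a Chebyshev extremum, giving $\norm{Q_{n,\omega}}_{L^{\infty}[-\omega,\omega]}=1$.

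For (iii), I would first locate the extremum on $[-\pi,\pi]$. Here $\abs{u(z)}\le b:=1/\sin(\omega/2)$, with equality only at $z=\pm\pi$ (because $\abs{\sin(z/2)}\le 1$ with equality at $z=\pm\pi$), and $b\in(1,\sqrt{2}\,]$ for $\omega\in[\pi/2,\pi)$. Since $T_{2n}$ is even, $\abs{T_{2n}}\le 1$ on $[-1,1]$, and $x\mapsto T_{2n}(x)=\cosh(2n\,\mathrm{arccosh}\,x)$ is increasing on $[1,\infty)$, the supremum of $\abs{Q_{n,\omega}}$ over $[-\pi,\pi]$ is the single value $T_{2n}(b)=Q_{n,\omega}(\pi)=\cosh\bigl(2n\,\mathrm{arccosh}\,b\bigr)$. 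It then remains to compare $\mathrm{arccosh}\,b$ with $\pi-\omega$: writing $\omega=\pi-\delta$ with $\delta\in(0,\pi/2]$ gives $b=\sec(\delta/2)$ and $\mathrm{arccosh}(\sec(\delta/2))=\ln\bigl(\sec(\delta/2)+\tan(\delta/2)\bigr)$, whose derivative in $\delta$ equals $\tfrac12\sec(\delta/2)\in[\tfrac12,\tfrac1{\sqrt2}]$ on $(0,\pi/2]$; integrating from $\delta=0$ yields $\tfrac{\delta}{2}\le\mathrm{arccosh}\,b\le\tfrac{\delta}{\sqrt2}$, hence $2n\,\mathrm{arccosh}\,b\in[n\delta,\sqrt2\,n\delta]$. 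Combining with the elementary bracketing $\tfrac12 e^{t}\le\cosh t\le e^{t}$ gives $\tfrac12 e^{\,n(\pi-\omega)}\le Q_{n,\omega}(\pi)\le e^{\sqrt2\,n(\pi-\omega)}$, and the factor $\tfrac12$ is absorbed into a constant $c_1\in(0,1)$ once $n(\pi-\omega)$ is bounded away from $0$, which is the only regime in which the statement is applied.

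Parts (i) and (ii) are routine; the crux of (iii) is the two structural observations — that on $[-\pi,\pi]$ the $L^{\infty}$ norm reduces to a single evaluation of $T_{2n}$ just outside its oscillatory interval, and that $\mathrm{arccosh}(\sec(\delta/2))$ is comparable to $\delta$ — and the only real care needed is to carry the comparison through with explicit constants $c_1,c_2$ rather than mere asymptotics.
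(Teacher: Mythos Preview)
The paper does not actually prove this proposition: it is quoted without proof from Erd\'elyi's work on Remez-type inequalities (the reference \texttt{Erdelyi92remez-typeinequalities}), so there is no in-paper argument to compare against. Your self-contained derivation via the standard Chebyshev identities $T_{2n}(\cos\vartheta)=\cos(2n\vartheta)$ and $T_{2n}(\cosh u)=\cosh(2nu)$ is correct in all essentials. Parts~(i) and~(ii) are clean: the observation that $T_{2n}$ is a degree-$n$ polynomial in $x^{2}$, together with $\sin^{2}(z/2)=(1-\cos z)/2$, is exactly the right mechanism for~(i), and the range analysis of $u(z)=\sin(z/2)/\sin(\omega/2)$ on $[-\omega,\omega]$ is precisely what is needed for~(ii). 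For~(iii), your reduction of the sup-norm on $[-\pi,\pi]$ to the single value $T_{2n}(b)=\cosh(2n\,\mathrm{arccosh}\,b)$ with $b=\sec(\delta/2)$, and the two-sided comparison of $\mathrm{arccosh}(\sec(\delta/2))$ with $\delta=\pi-\omega$ via its derivative $\tfrac12\sec(\delta/2)\in[\tfrac12,\tfrac1{\sqrt2}]$, are both correct and give the right exponential growth.

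One point deserves a slightly cleaner treatment. Your final bracketing $\tfrac12 e^{n(\pi-\omega)}\le Q_{n,\omega}(\pi)$ does not literally give $e^{c_{1}n(\pi-\omega)}$ uniformly over all $n\ge1$ and $\omega\in[\pi/2,\pi)$; indeed, for $n=1$ and $\omega\to\pi^{-}$ one has $Q_{1,\omega}(\pi)=2\sec^{2}(\delta/2)-1=1+\delta^{2}/2+o(\delta^{2})$, which is below $e^{c_{1}\delta}$ for any fixed $c_{1}>0$. The right fix is to allow $c_{1}$ to depend on $\omega$ (which is how the proposition is used in the paper, with $\omega$ fixed and $n=p\to\infty$): for fixed $\omega$ set $\alpha=2\,\mathrm{arccosh}(1/\sin(\omega/2))>0$; since $t\mapsto\ln\cosh t$ is convex with value $0$ at $0$, the ratio $\ln\cosh(n\alpha)/n$ is nondecreasing in $n$, so $c_{1}=(\ln\cosh\alpha)/(\pi-\omega)>0$ works for every $n\ge1$. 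With that adjustment your argument is complete and, being fully elementary, arguably more transparent than deferring to the cited reference.
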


\begin{proof}[Proof of Proposition \ref{prop:exp_blowup}]
The goal is to use Proposition \ref{prop:cheb}, and the first part of the proof is a setup for that. In particular, let $M=c2^R$ for some $c<1/\epsilon$ and $R\geq 1$. By Lemma \ref{lem:decomposition}, if $0\leq l\leq (2^R-1)\left\lceil a\right\rceil$, then 
$$
\phi_{R,l}\in V_0^{(a)}\oplus W_0^{(a)}\oplus\cdots\oplus W_{R-1}^{(a)}.
$$ 
Hence, for $N\geq N_R$ and 
$p = (2^{R-1}-1)\left\lceil a\right\rceil$, it follows that
 $$\mathcal{T}_N\supset V_0^{(a)}\oplus W_0^{(a)}\oplus\cdots\oplus W_{R-1}^{(a)}\supset \br{\phi_{R,l}:0\leq l\leq 2p}.$$
Thus, we get that 
\begin{align*}
 (C_{N,M})^2 = \mathop{\inf_{\norm{\varphi}=1}}_{\varphi \in \mathcal{T}_N}\norm{P_M\varphi}^2 
 \leq \inf\{\norm{P_M\varphi}^2: \sum_{l=0}^{2p}\abs{\beta_l}^2=1, \varphi =\sum_{l=0}^{2p}\beta_l \phi_{R,l}\}.
\end{align*}
Hence, by Proposition \ref{prop:formula} and the choice of $M = c2^R$, it follows that 
\begin{equation}\label{estimate_C}
\begin{split}
(
C_{N,M})^2 &\leq \inf\br{\sum_{j=-\fl{\frac{M}{2}}}^{\cl{\frac{M}{2}}-1} \dfrac{\epsilon}{2^{R}}\abs{\Phi\left(\dfrac{2\pi\epsilon j}{2^{R}}\right)\hat{\phi}\left(-\dfrac{2\pi\epsilon j }{2^{R}}\right) }^2: \Phi(z) = \sum_{l=0}^{2p} \beta_l e^{ izl}, \sum_{l=0}^{2p}\abs{\beta_l}^2=1}\\
&\leq \norm{\hat{\phi}}_{L^\infty[-\pi c\epsilon,\pi c\epsilon]}^2\inf\br{\sum_{j=-\fl{\frac{M}{2}}}^{\cl{\frac{M}{2}}-1} \dfrac{\epsilon}{2^{R}}\abs{\Phi\left(\dfrac{2\pi\epsilon j}{2^{R}}\right) }^2: \Phi(z) = \sum_{l=0}^{2p} \beta_l e^{ izl}, \norm{\Phi}_{L^2[-\pi,\pi]}^2=1}\\
&= \norm{\hat{\phi}}_{L^\infty[-\pi c\epsilon,\pi c\epsilon]}^2\inf\br{\sum_{j=-\fl{\frac{M}{2}}}^{\cl{\frac{M}{2}}-1} \dfrac{\epsilon}{2^{R}}\abs{\Phi\left(\dfrac{2\pi\epsilon j}{2^{R}}\right) }^2: \Phi(z) = \sum_{\abs{l}\leq p} \beta_l e^{ izl}, \norm{\Phi}_{L^2[-\pi,\pi]}^2=1}.
\end{split}
\end{equation}
The last equality above is a consequence of the following: For $\Phi(z) = \sum_{l=0}^{2p} \beta_l e^{ izl},$
\begin{align*}
\sum_{j=-\fl{\frac{M}{2}}}^{\cl{\frac{M}{2}}-1} \dfrac{\epsilon}{2^{R}}\abs{\Phi\left(\dfrac{2\pi\epsilon j}{2^{R}}\right) }^2 &=\sum_{j=-\fl{\frac{M}{2}}}^{\cl{\frac{M}{2}}-1} \dfrac{\epsilon}{2^{R}}\abs{\sum_{\abs{l}\leq p} \beta_{l+p} e^{2\pi i\epsilon jl/2^R}e^{2\pi i \epsilon jp/2^R} }^2
=\sum_{j=-\fl{\frac{M}{2}}}^{\cl{\frac{M}{2}}-1} \dfrac{\epsilon}{2^{R}}\abs{\sum_{\abs{l}\leq p} \beta_{l+p} e^{2\pi i\epsilon jl/2^R} }^2.
\end{align*}
Note that we have carried out this shift in indices in order to later show that the infimum is taken over a set of functions which include those of the form $Q_{n,\omega}$ defined in Proposition \ref{prop:cheb}. 
From (\ref{estimate_C}) it follows easily that
\begin{align*}
 &(C_{N,M})^2 \leq c\epsilon\norm{\hat{\phi}}_{L^\infty[-\pi c\epsilon,\pi c\epsilon]}^2\inf \br{ \norm{\Phi}_{L^\infty[-\pi c\epsilon,\pi c\epsilon]}^2: \Phi(z) = \sum_{\abs{l}\leq p} \beta_l e^{izl}, \norm{\Phi}_{L^2[-\pi,\pi]}^2=1},
\end{align*}
where we have again used that $M = c2^R$.
Also, by the Cauchy Schwarz inequality, for $\Phi(z) = \sum_{\abs{l}\leq p} \beta_l e^{izl}$, $$\abs{\Phi(z)}^2 \leq (2p +1)\sum_{\abs{l}\leq p} \abs{\beta_l}^2 = (2p +1) \norm{\Phi}^2_{L^2[-\pi,\pi]}.$$
So, 
$$
\norm{\Phi}^2_{L^\infty[-\pi,\pi]}=2p +1 \implies \norm{\Phi}^2_{L^2[-\pi,\pi]}\geq 1.
$$
Thus,
\begin{equation}\label{final_C}
\begin{split}
 (C_{N,M})^2 &\leq c\epsilon\norm{\hat{\phi}}_{L^\infty[-\pi c\epsilon,\pi c\epsilon]}^2\inf \br{ \norm{\Phi}_{L^\infty[-\pi c\epsilon,\pi c\epsilon]}^2: \Phi(z) = \sum_{\abs{l}\leq p}  \beta_l e^{ izl}, \norm{\Phi}^2_{L^2[-\pi,\pi]}\geq 1}\\
 &\leq D_R\inf \br{ \norm{\Phi}_{L^\infty[-\pi c\epsilon,\pi c\epsilon]}^2: \Phi(z) = \sum_{\abs{l}\leq p} \beta_l e^{izl}, \norm{\Phi}^2_{L^\infty[-\pi,\pi]}=1},\\
\end{split}
\end{equation}
where 
$$
D_R = (2p+1)c\epsilon\norm{\hat{\phi}}_{L^\infty[-\pi c\epsilon,\pi c\epsilon]}^2.
$$

Having established (\ref{final_C}) we can now make use of Proposition \ref{prop:cheb}. Indeed,
for $\omega \in [\pi/2,\pi)$, let 
$$
q_{\omega} =\dfrac{Q_{p,\omega}}{\norm{Q_{p,\omega}}_{L^\infty[-\pi,\pi]}},
$$ 
where $Q_{p,\omega}$ is defined in Proposition \ref{prop:cheb}.
Then, by Proposition \ref{prop:cheb},
\begin{equation}\label{q}
q_\omega\in \br{ \Phi: \Phi(z) = \sum_{\abs{l}\leq p} \beta_l e^{izl}, \norm{\Phi}^2_{L^\infty[-\pi,\pi]}=1},\end{equation}
and there exists some constant $\eta>0$, independent of $p$, such that 
\begin{equation}\label{q2}
\norm{q_{\omega}}_{L^\infty[-\omega,\omega]} \leq \dfrac{1}{\norm{Q_{p,\omega}}_{L^\infty[-\pi,\pi]}}\leq \exp({-\eta p(\pi-\omega)}).
\end{equation}
We now split the proof into two cases, and we will show that $C_{N,M} \to 0$ exponentially as $R\to \infty$ when
 $$
\text{Case 1:} \quad 
c\in\left[\dfrac{1}{2\epsilon},\dfrac{1}{\epsilon}\right), \qquad \text{Case 2:} \quad 
c\in\left(0,\dfrac{1}{2\epsilon} \right).
$$

Case 1: By (\ref{final_C}), (\ref{q}) and (\ref{q2}) (and recalling the value of $p = (2^{R-1}-1)\left\lceil a\right\rceil$),
\begin{align*}
 (C_{N,M})^2&\leq D_R \norm{q_{\pi c\epsilon}}_{L^\infty[-\pi c\epsilon,\pi c\epsilon]}^2 \\ 
&\leq (2^R\left\lceil a\right\rceil -2\left\lceil a\right\rceil +1)c\epsilon\norm{\hat{\phi}}_{L^\infty[-\pi c\epsilon,\pi c\epsilon]}^2 \exp({(-\eta\pi(1-c\epsilon)(2^{R}\left\lceil a\right\rceil -2\left\lceil a\right\rceil))})
\end{align*}  
Thus, we have shown that $C_{N,M}$ decays exponentially as $N\to \infty$ in the first case scenario.  

Case 2: Clearly, we still have exponential decay in $C_{N,M}$, since,  again by (\ref{final_C}), (\ref{q}) and (\ref{q2}),
\begin{align*}
 (C_{N,M})^2&\leq D_R \norm{q_{\pi/2}}_{L^\infty[-\pi c\epsilon,\pi c\epsilon]}^2 
\leq D_R \norm{q_{\pi/2}}_{L^\infty[-\pi/2,\pi /2]}^2 \\
&\leq (2^R\left\lceil a\right\rceil -2\left\lceil a\right\rceil +1)c\epsilon\norm{\hat{\phi}}_{L^\infty[-\pi c\epsilon,\pi c\epsilon]}^2\exp({-\eta\pi(2^{R-1}\left\lceil a\right\rceil -\left\lceil a\right\rceil)}).
\end{align*}
\end{proof}

\section{Proof of Theorem \ref{prop:daubechies_case}}\label{s:prf3}
We are now ready to present the proof of Theorem \ref{prop:daubechies_case}.

\begin{remark}\label{rem:range}
In the construction of Daubechies wavelets \cite{daubechies1992ten}, the scaling function $\phi$ is defined such that 
$$
\hat{\phi}(\xi) := \prod_{s=1}^{\infty} m_0\left(\frac{\xi}{2^s}\right)
$$
where 
$$
m_0(\xi)=\left(\frac{1+e^{-i\xi}}{2}\right)^N \mathcal{L}(\xi)
$$
for some $N\in\mathbb{N}$ and $\mathcal{L}$ is such that
$$
\abs{\mathcal{L}(\xi)}^2=\sum_{k=0}^{N-1}\binom{N-1+k}{k}\sin^{2k}\left( \frac{\xi}{2}\right).
$$

Note that in this case, $\abs{m_0(\xi)}>0$ for all $\xi\in(-\pi,\pi)$ and since $\hat{\phi}(0)=1$, there exists $K\in\mathbb{N}$ such that $\abs{\hat{\phi}\left(\xi/2^K\right)}>0$ for all $\xi\in(-2\pi,2\pi)$.
Hence,
\begin{align}\label{eq:non_zero}
\hat{\phi}(\xi)=\hat{\phi}\left(\dfrac{\xi}{2^K}\right)\prod_{s=1}^K  m_0\left(\frac{\xi}{2^s}\right)\neq 0 \qquad\text{for all } \xi\in(-2\pi,2\pi).
\end{align}
\end{remark}

\begin{proof}[Proof of Theorem \ref{prop:daubechies_case}]

Recall that $\epsilon\in (0,1/(T_1+T_2)]$ and from Proposition \ref{prop:exp_blowup}, for all $c<1/\epsilon$, $C_{N_R,M}$ will tend to $0$ exponentially if $M<2^R/\epsilon$. So, for each $\theta\in(1,\infty)$, there exists $R_0\in\mathbb{N}$ such that for all $R\geq R_0$,
$
\Theta(N_R;\theta)\geq \left\lceil \dfrac{2^R}{\epsilon}\right\rceil
$. 
Hence, if it is known that there exists $R_1$ and $\theta\in(1,\infty)$ such that for all $R\geq R_1$
\begin{equation}\label{eq:daub_upper}
\Theta(N_R;\theta)\leq \left\lceil \dfrac{2^R}{\epsilon}\right\rceil
\end{equation}
then for such $\theta$ and all $R\geq \max\br{R_0,R_1}$ we have
$
\Theta(N_R;\theta)= \left\lceil 2^R/\epsilon\right\rceil
$.
So, it remains to show the existence of $\theta \in (1,\infty)$ such that (\ref{eq:daub_upper}) holds.
Let $\varphi\in \mathcal{T}_{N_R}$ be such that $\norm{\varphi}=1$. Then, by Lemma \ref{lem:decomposition},  we have that 
\begin{equation}\label{eq:sum1}
\varphi = \sum_{l= A_{R,1}}^{A_{R,2}} \alpha_l \phi_{R,l}, \qquad \sum_{l= A_{R,1}}^{A_{R,2}} \abs{\alpha_l}^2=1,
\end{equation} where $A_{R,1}$ and $A_{R,2}$ are as defined in (\ref{eq:A_R}). Now, let $M= \lceil 2^R/\epsilon\rceil$. Then, by Proposition \ref{prop:formula}, 
\begin{align}\label{eq;first_bound}
 \norm{P_M \varphi}^2 &=\sum_{j=-\fl{\frac{M}{2}}}^{\cl{\frac{M}{2}}-1}\dfrac{\epsilon}{2^R}\abs{\Phi\left(\dfrac{\epsilon j}{2^R}\right)\hat{\phi}\left(-\dfrac{2\pi\epsilon j}{2^R}\right)}^2 \geq \gamma_1^2 \sum_{j=-\fl{\frac{M}{2}}}^{\cl{\frac{M}{2}}-1}\dfrac{\epsilon}{2^R}\abs{\Phi\left(\dfrac{\epsilon j}{2^R}\right)}^2
\end{align}
where
$
\Phi(z) = \sum_{l= A_{R,1}}^{A_{R,2}} \alpha_l e^{2\pi i l z}
$ and 
\begin{align*}
\gamma_1&=\inf_{\xi\in\left[-\pi\epsilon M 2^{-R},\pi\epsilon M 2^{-R}\right]}\abs{\hat{\phi}(\xi)}
\geq \inf_{\xi\in\left[-(1+\epsilon 2^{-R})\pi,(1+\epsilon 2^{-R})\pi\right]}\abs{\hat{\phi}(\xi)}>0.
\end{align*} 
Note that  $\left[-(1+\epsilon 2^{-R})\pi,(1+\epsilon 2^{-R})\pi\right] \subset (-2\pi,2\pi)$ by the assumption that $\epsilon \leq \frac{1}{T_1+T_2} <1$ and by  (\ref{eq:non_zero}), $\gamma_1>0$. Note also that  we can let $\gamma_1=\inf_{\xi\in [-\pi,\pi]}\abs{\hat{\phi}(\xi)}$ whenever $2^R/\epsilon\in \mathbb{Z}$ since we have set $M=2^R/\epsilon$.
 Hence, it remains to obtain a positive lower bound for 
\begin{align}\label{eq:interm_sum}
\sum_{j=-\fl{\frac{M}{2}}}^{\cl{\frac{M}{2}}-1}\dfrac{\epsilon}{2^R}\abs{\Phi\left(\dfrac{\epsilon j}{2^R}\right)}^2.
\end{align}

We will split the proof into several cases. The case of $a=1$ is treated separately mainly for pedagogical reasons as the proof is simpler in this case. 

{\bf Case 1:} $a=1$ and $1/\epsilon\in\mathbb{N}$.

Since $a=1$, we have that $2^R/\epsilon \geq 2^{R}= A_{R,2}-A_{R,1}+1$ and for $1/\epsilon\in\mathbb{N}$ (in which case, $M=2^R/\epsilon$ is even), Lemma \ref{lem:DFT} gives that 
\begin{align*}
\sum_{j=-\frac{M}{2}}^{\frac{M}{2}-1}\dfrac{\epsilon}{2^R}\abs{\Phi\left(\dfrac{\epsilon j}{2^R}\right)}^2 =1.
\end{align*}
So, given any $R\in\mathbb{N}$ and 
$
\theta \geq \left(\inf_{\xi\in [-\pi,\pi]}\abs{\hat{\phi}(\xi)}\right)^{-1}
$, we have that 
$
\Theta(N_R;\theta)\leq 2^R/\epsilon
$.

{\bf Case 2:} $a=1$ and $1/\epsilon\notin\mathbb{N}$.

In this case we must have $\epsilon<1$, and an application of Theorem \ref{thm:grochenig} to $\Phi$ with $r=\left\lceil 2^R/\epsilon\right\rceil$, $2D=2^R\geq A_{R,2}-A_{R,1}$, $\delta={\epsilon}/{2^R}$ and 
$$
x_j=\frac{\epsilon}{2^R}\left(-\fl{\frac{M}{2}}+j-1\right), \qquad j=1,\ldots,r
$$ gives that
\begin{align*}
\sum_{j=-\fl{\frac{M}{2}}}^{\cl{\frac{M}{2}}-1}\dfrac{\epsilon}{2^R}\abs{\Phi\left(\dfrac{\epsilon j}{2^R}\right)}^2 \geq \left(1-\epsilon\right)^2 >0.
\end{align*}
So, given any $R\in\mathbb{N}$ and  
$$
\theta \geq \left(\left(1-\epsilon\right)\inf_{\xi\in\left[-(1+\epsilon)\pi,(1+\epsilon)\pi\right]}\abs{\hat{\phi}(\xi)}\right)^{-1},
$$ we have that 
$
\Theta(N_R;\theta)\leq 2^R/\epsilon.
$

{\bf Case 3:} $a>1$ and $2^R/\epsilon\in\mathbb{N}$ for some $R$.

When $a>1$, Lemma \ref{lem:DFT} and  Theorem \ref{thm:grochenig} cannot be applied directly because $2^R/\epsilon$ may be less than $A_{R,2}-A_{R,1}+1=2^{R}(3\lceil a\rceil -2)+\lceil a\rceil -1$ and so, we will first decompose $\Phi$ into two other trigonometric polynomials for which we can obtain bounds. 

We now let $R\geq \log_2( \lceil a\rceil -1)$. Since $\phi$ and $\psi$ are continuous and compactly supported, $\norm{\phi}_\infty$ and $\norm{\psi}_\infty$ exist. So, by Lemma \ref{lem:decomposition} (ii) and Proposition \ref{prop:formula},
\begin{align*}
\sum_{j=-\fl{\frac{M}{2}}}^{\cl{\frac{M}{2}}-1}\dfrac{\epsilon}{2^R}\abs{\Phi\left(\dfrac{\epsilon j}{2^R}\right)}^2
=\sum_{j=-\fl{\frac{M}{2}}}^{\cl{\frac{M}{2}}-1} \dfrac{\epsilon}{2^R}\abs{\Phi_1\left(\dfrac{\epsilon j}{2^R}\right)+\Phi_2\left(\dfrac{\epsilon j}{2^R}\right)}^2 
\end{align*}
where 
\begin{align*}
\Phi_1(z) = \sum_{j=A_{R,1}}^{A_{R,2}-\lceil a\rceil}\alpha_j e^{2\pi i z j},\qquad
\Phi_2(z) = \sum_{j=A_{R,2}-\lceil a\rceil+1}^{A_{R,2}}\alpha_j e^{2\pi i z j},\qquad 
\end{align*}
and 
\begin{equation}\label{eq:sum2}
\sum_{j=A_{R,2}-\lceil a\rceil+1}^{A_{R,2}} \abs{\alpha_j}^2  \leq \frac{\left(\norm{\phi}_\infty+\norm{\psi}_\infty\right)^2 \lceil a\rceil(\lceil a\rceil+1)}{2^{R+1}}.
\end{equation}
So, as argued in (\ref{eq;first_bound}),
$$
\norm{P_M \varphi}^2 \geq \gamma_1^2 \left(C^2_{\Phi_1}+C^2_{\Phi_2} -2C_{\Phi_1}C_{\Phi_2}\right)
$$
where 
\begin{equation}\label{eq;C}
C_{\Phi_s} = \sqrt{\sum_{j=-\fl{\frac{M}{2}}}^{\cl{\frac{M}{2}}-1}\dfrac{\epsilon}{2^R} \abs{\Phi_s\left(\dfrac{\epsilon j}{2^R}\right)}^2}, \qquad s=1,2.
\end{equation} 

If $2^R/\epsilon\in\mathbb{N}$ for some $R$, then we may apply Lemma \ref{lem:DFT} to $\Phi_1$ since
$$
(A_{R,2}-A_{R,1})-\lceil a\rceil +1 =2^R(3\lceil a\rceil -2) \leq 2^R/\epsilon,
$$
and to $\Phi_2$ since
$$
A_{R,2}-(A_{R,2}-\cl{a}+1) +1 = \cl{a}\leq 2^R/\epsilon.
$$
We thus obtain
$$
C_{\Phi_1}^2=\sum_{j=A_{R,1}}^{A_{R,2}-\lceil a\rceil} \abs{\alpha_j}^2, \qquad
C_{\Phi_2}^2= \sum_{j=A_{R,2}-\lceil a\rceil+1}^{A_{R,2}} \abs{\alpha_j}^2.
$$ 
Note that
\begin{align*}
C^2_{\Phi_1}+C^2_{\Phi_2} -2C_{\Phi_1}C_{\Phi_2}
&= \sum_{j=A_{R,1}}^{A_{R,2}}\abs{\alpha_j}^2 -2 \left(\sum_{j=A_{R,1}}^{A_{R,2}-\cl{a}}\abs{\alpha_j}^2\right)^{1/2}\left(\sum_{j=A_{R,2}-\cl{a}+1}^{A_{R,2}}\abs{\alpha_j}^2\right)^{1/2}\\
&\geq 1-2\left(\frac{\left(\norm{\phi}_\infty+\norm{\psi}_\infty\right)(\lceil a\rceil+1)}{2^{(R+1)/2}} \right)
\end{align*}
by (\ref{eq:sum1}) and (\ref{eq:sum2}).
Hence, for all $\mu\in(0,1)$, there exists $R_0$ such that for all $R\geq R_0$,
$$
\norm{P_M\varphi}^2 \geq \inf_{\xi\in[-\pi,\pi]}\abs{\hat{\phi}(\xi)}^2\left(1- \frac{\left(\norm{\phi}_\infty+\norm{\psi}_\infty\right)(\lceil a\rceil+1)}{2^{(R-1)/2}} \right) > \mu \inf_{\xi\in[-\pi,\pi]}\abs{\hat{\phi}(\xi)}^2,
$$
and so given any 
$$
\theta > \left(\inf_{\xi\in[-\pi,\pi]}\abs{\hat{\phi}(\xi)}\right)^{-1},
$$ there exists $R_0$ such that for all $R\geq R_0$,
$$
\Theta(N_R;\theta)\leq 2^R/\epsilon.
$$

{\bf Case 4:} $a>1$ and $2^R/\epsilon\not \in \mathbb{N}$ for all $R\in\mathbb{N}$.

In this case, $\epsilon<1/(3\lceil a\rceil -2)$ and as in Case 3, obtaining appropriate estimates for $C_{\Phi_1}$ and $C_{\Phi_2}$ defined in (\ref{eq;C}) will provide the required lower bound for (\ref{eq:interm_sum}).
 
In the case of $C_{\Phi_1}$, applying Theorem \ref{thm:grochenig} to $\Phi_1$ with $r=\left\lceil 2^R/\epsilon\right\rceil$, $2D=2^R(3\cl{a}-2)-2= (A_{R,2}-\cl{a})-A_{R,1}$, $\delta={\epsilon}/{2^R}$ and 
$$
x_j=\frac{\epsilon}{2^R}\left(-\fl{\frac{M}{2}}+j-1\right), \qquad j=1,\ldots,r
$$ gives that
\begin{align*}
 (1-\delta_1) \sum_{j=A_{R,1}}^{A_{R,2}-\lceil a\rceil} \abs{\alpha_j}^2 \leq \left(\sum_{j=1}^r \nu_j \abs{\Phi_1(x_j)}^2\right)^{\frac{1}{2}} \leq (1+\delta_1) \sum_{j=A_{R,1}}^{A_{R,2}-\lceil a\rceil} \abs{\alpha_j}^2
\end{align*}
where $\delta_1 =\epsilon(3\lceil a\rceil -2-1/2^{R-1}) < \epsilon(3\lceil a\rceil -2)<1$, $\nu_j =(x_{j+1}-x_{j-1})/2$ and $x_0 = x_r-1$. Note that $\epsilon/2^{R+1}\leq\nu_j\leq 2^R/\epsilon$.
Hence, by (\ref{eq;C}), 
$$
(1-\delta_1)^2 \sum_{j=A_{R,1}}^{A_{R,2}-\lceil a\rceil} \abs{\alpha_j}^2 \leq C_{\Phi_1}^2 \leq 2(1+\delta_1)^2 \sum_{j=A_{R,1}}^{A_{R,2}-\lceil a\rceil} \abs{\alpha_j}^2.
$$

In the case of $C_{\Phi_2}$, applying Theorem \ref{thm:grochenig} to $\Phi_2$ with $r=\left\lceil 2^R/\epsilon\right\rceil$, $2D=2\cl{(\cl{a}-1)/2}\geq A_{R,2}-(A_{R,2}-\cl{a}+1)$, $\delta={\epsilon}/{2^R}$ and 
$$
x_j=\frac{\epsilon}{2^R}\left(-\fl{\frac{M}{2}}+j-1\right), \qquad j=1,\ldots,r
$$ gives that
\begin{align*}
 (1-\delta_2) \sum_{j=A_{R,2}-\cl{a}+1}^{A_{R,2}} \abs{\alpha_j}^2 \leq \left(\sum_{j=1}^r \nu_j \abs{\Phi_1(x_j)}^2\right)^{\frac{1}{2}} \leq (1+\delta_2) \sum_{j=A_{R,2}-\cl{a}+1}^{A_{R,2}} \abs{\alpha_j}^2
\end{align*}
where $\delta_2 \leq\epsilon(\cl{a}+1)/2^R<1$, $\nu_j =(x_{j+1}-x_{j-1})/2$ and $x_0 = x_r-1$. Again, $\epsilon/2^{R+1}\leq\nu_j\leq 2^R/\epsilon$.
So,
$$
(1-\delta_2)^2 \sum_{j=A_{R,2}-\lceil a\rceil+1}^{A_{R,2}} \abs{\alpha_j}^2 \leq C_{\Phi_2}^2 \leq 2(1+\delta_2)^2 \sum_{j=A_{R,2}-\lceil a\rceil+1}^{A_{R,2}} \abs{\alpha_j}^2.
$$
Hence,
\begin{align*}
\norm{P_M\varphi}^2 &\geq \inf_{\xi\in[-(1+\epsilon 2^{-R})\pi,(1+\epsilon 2^{-R})\pi]}\abs{\hat{\phi}(\xi)}^2\left((1-\delta_1)^2-(1+\delta_2)(1+\delta_1) \frac{\left(\norm{\phi}_\infty+\norm{\psi}_\infty\right)(\lceil a\rceil+1)}{2^{(R-3)/2}} \right)\\
&\to (1-\epsilon(3\lceil a\rceil -2))^2\inf_{\xi\in[-\pi,\pi]}\abs{\hat{\phi}(\xi)}^2>0 \quad\text{as} \quad R\to\infty.
\end{align*}
So, for all $\mu\in(0,1)$, there exists $R_0$ such that for all $R\geq R_0$,
\begin{align*}
\norm{P_M\varphi}^2 &\geq \mu(1-\epsilon(3\lceil a\rceil -2))^2\inf_{\xi\in[-\pi,\pi]}\abs{\hat{\phi}(\xi)}^2>0.
\end{align*}
and for all 
$$
\theta > \left((1-\epsilon(3\lceil a\rceil -2))\inf_{\xi\in[-\pi,\pi]}\abs{\hat{\phi}(\xi)}\right)^{-1},
$$ there exists $R_0$ such that for all $R\geq R_0$, 
$
\Theta(N_R;\theta)\leq 2^R/\epsilon.
$
\end{proof}

\section{Numerical Examples}\label{s:numexp}
In this section we provide numerical examples to illustrate the behaviour of the stable sampling rate as well as demonstrating sharpness of our estimates. We also show that, because of the linearity of the stable sampling rate, any convergence properties of a series expansion of a function in a particular wavelet basis will be inherited (up to a constant) by the generalized sampling reconstruction based on Fourier samples.  In other words, as discussed in Section \ref{ss:optimal}, generalized sampling is, up to a constant, an oracle for the wavelet reconstruction problem.

\subsection{Sharpness of the stable sampling rate estimates}
Before we demonstrate the sharpness of our estimates numerically, let us recall the result from Theorem  \ref{prop:daubechies_case}. In particular, 
for $N_R=2^R\left\lceil a\right\rceil +(R+1)(\left\lceil a\right\rceil -1)$ and when $2^R/\epsilon \in \mathbb{Z}$, then for all sufficiently large $R$
\begin{align} \label{eq:daub_ssr2}
\Theta(N_R;\theta) = \dfrac{2^R}{\epsilon},
\end{align}
where 
\begin{align}\label{eq:gamma2}
\theta > \left(\inf_{\xi\in [-\pi,\pi]}\abs{\hat{\phi}(\xi)}\right)^{-1},
\end{align}
and $\phi$ is the scaling function of the wavelet.
Recall also the asymptotic result
\begin{equation}\label{eq;asympt}
\lim_{R\to\infty}\dfrac{\Theta(N_R;\theta)}{N_R}=\dfrac{1}{\epsilon\lceil a\rceil}.
\end{equation}
In this section we demonstrate these sharp results numerically. We consider the Haar wavelet (supported on $[0,1]$), the Daubechies-4 wavelet (supported in $[0,3]$), and the Daubechies-6 wavelet (supported in $[0,5]$). 

\begin{figure}
\centering
{\includegraphics[scale=0.57]{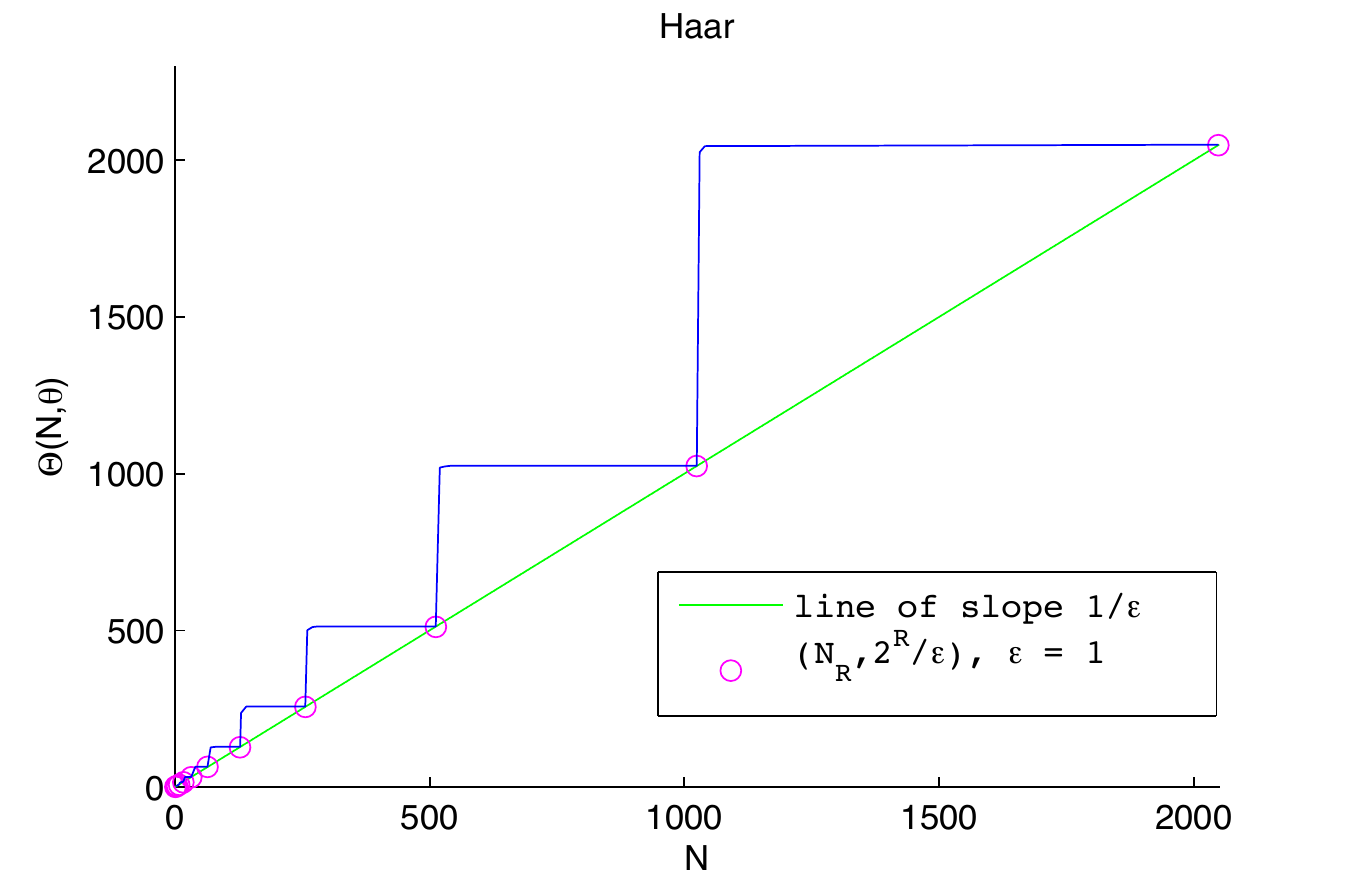}
\includegraphics[scale=0.57]{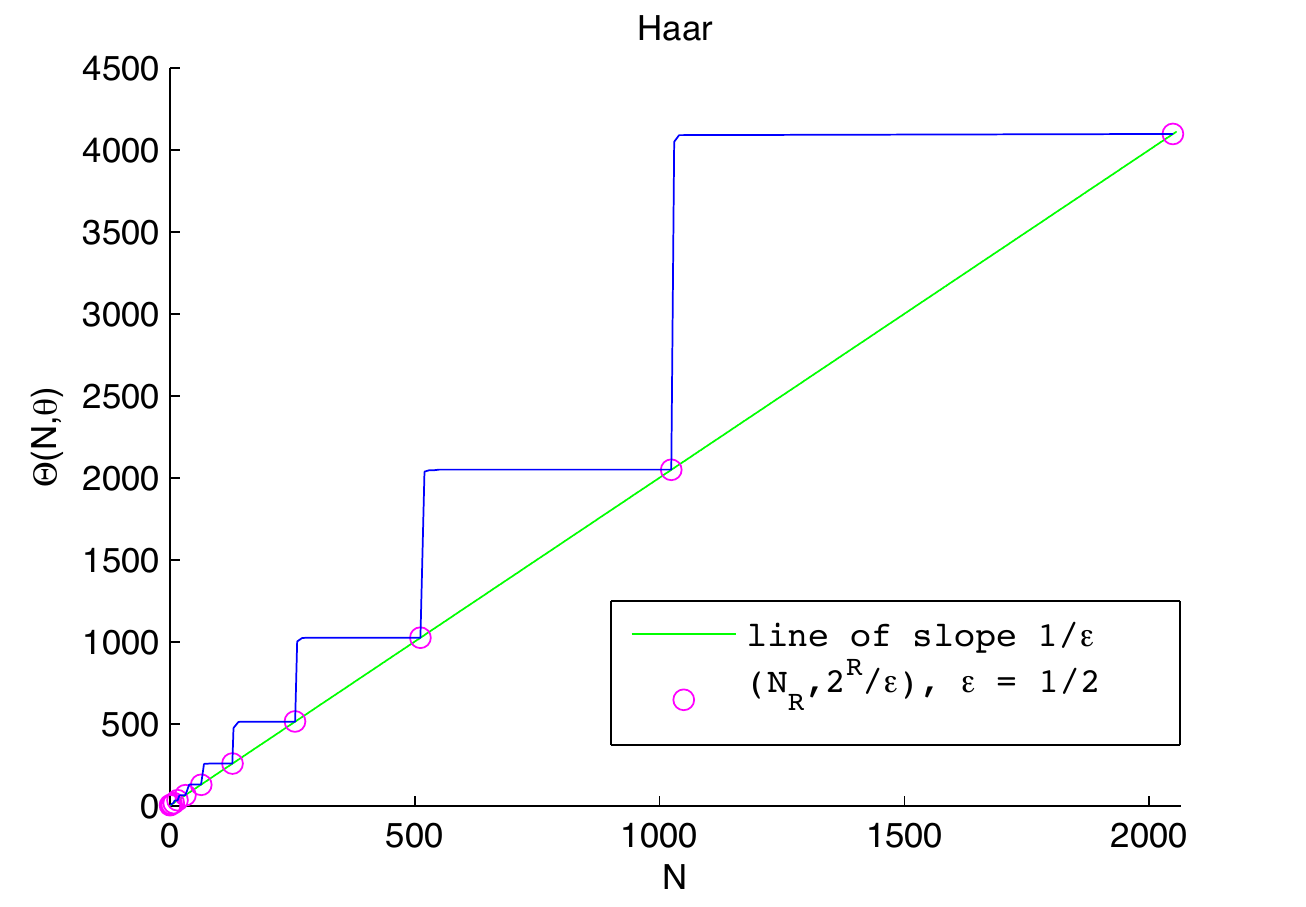}\\
}
\caption{The figure displays the stable sampling rate $\Theta(N;\theta)$ in blue for the Haar wavelet with Fourier samples for $\theta = \pi/2$ at a sampling density $\epsilon = 1$ (left) and $\epsilon = 1/2$ (right).
\label{fig:haar}}
\end{figure}

For the Haar wavelet, the Fourier sampling density must be $\epsilon\leq 1$.  Since 
$$
\left(\inf_{\xi\in [-\pi,\pi]}\abs{\hat{\phi}(\xi)}\right)^{-1} = \frac{\pi}{2},
$$
in this case, from the proof of Theorem \ref{prop:daubechies_case}, we see that (\ref{eq:daub_ssr2}) applies whenever $\theta \geq \frac{\pi}{2}$.

Figure \ref{fig:haar} shows the growth of $\Theta\left(N,\pi/2\right)$ for sampling densities $\epsilon = 1$ 
and $\epsilon = 1/2$ respectively. We observe from the figure that 
$$
\Theta\left(N_R,\pi/2\right) = 2^{R}, \qquad 
\Theta\left(N_R,\pi/2\right) = 2^{R+1}, \qquad R \in \mathbb{N}
$$ respectively, exactly as suggested in (\ref{eq:daub_ssr2}). Moreover, by (\ref{eq;asympt}), we have that
$$
\Theta(N_R;\pi/2) \sim N_R\frac{1}{\epsilon \lceil a\rceil}, 
$$
which is verified in Figure \ref{fig:haar} via the green line.

\begin{figure}
\centering
{\includegraphics[scale=0.57]{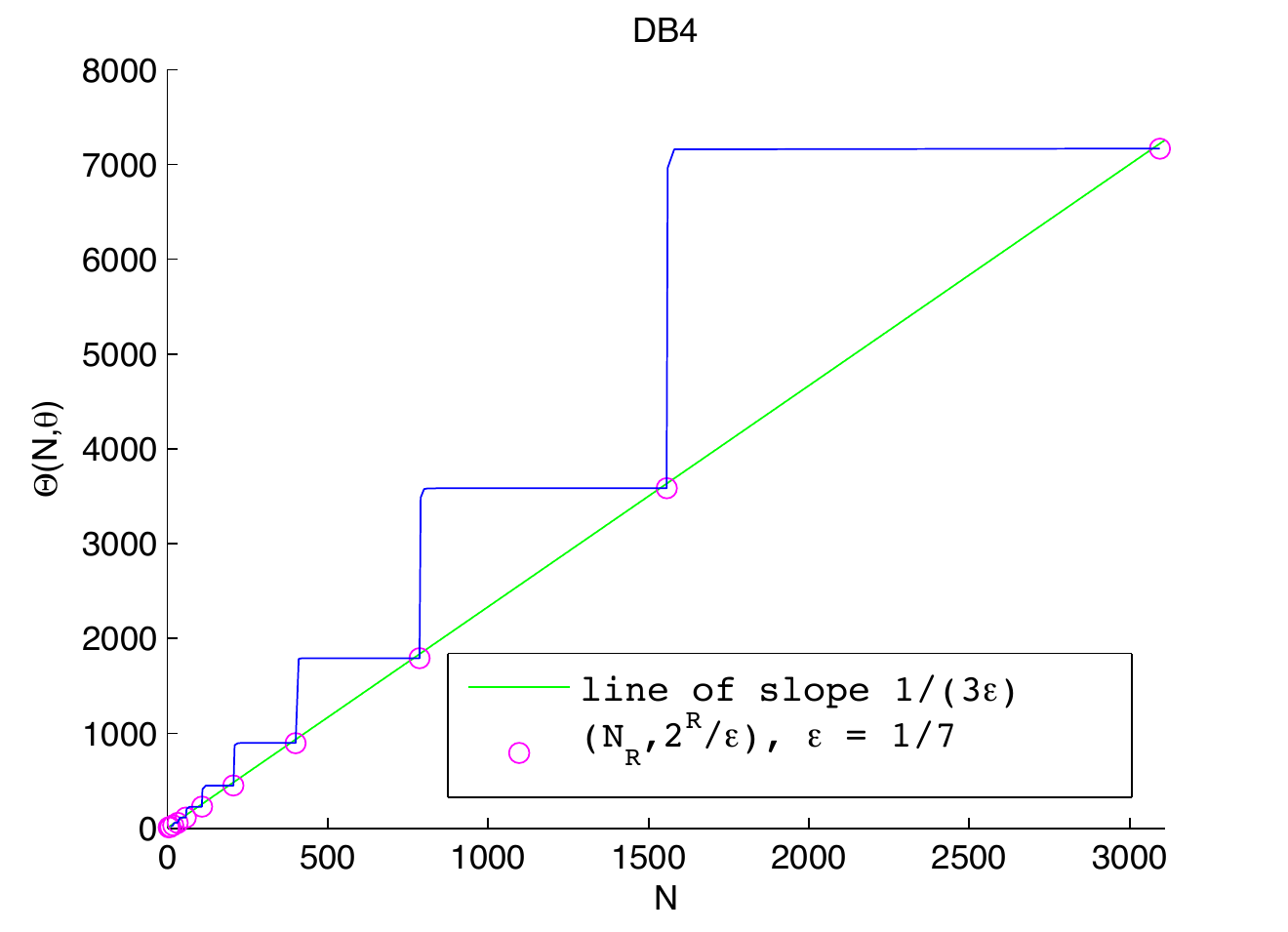}
\includegraphics[scale=0.57]{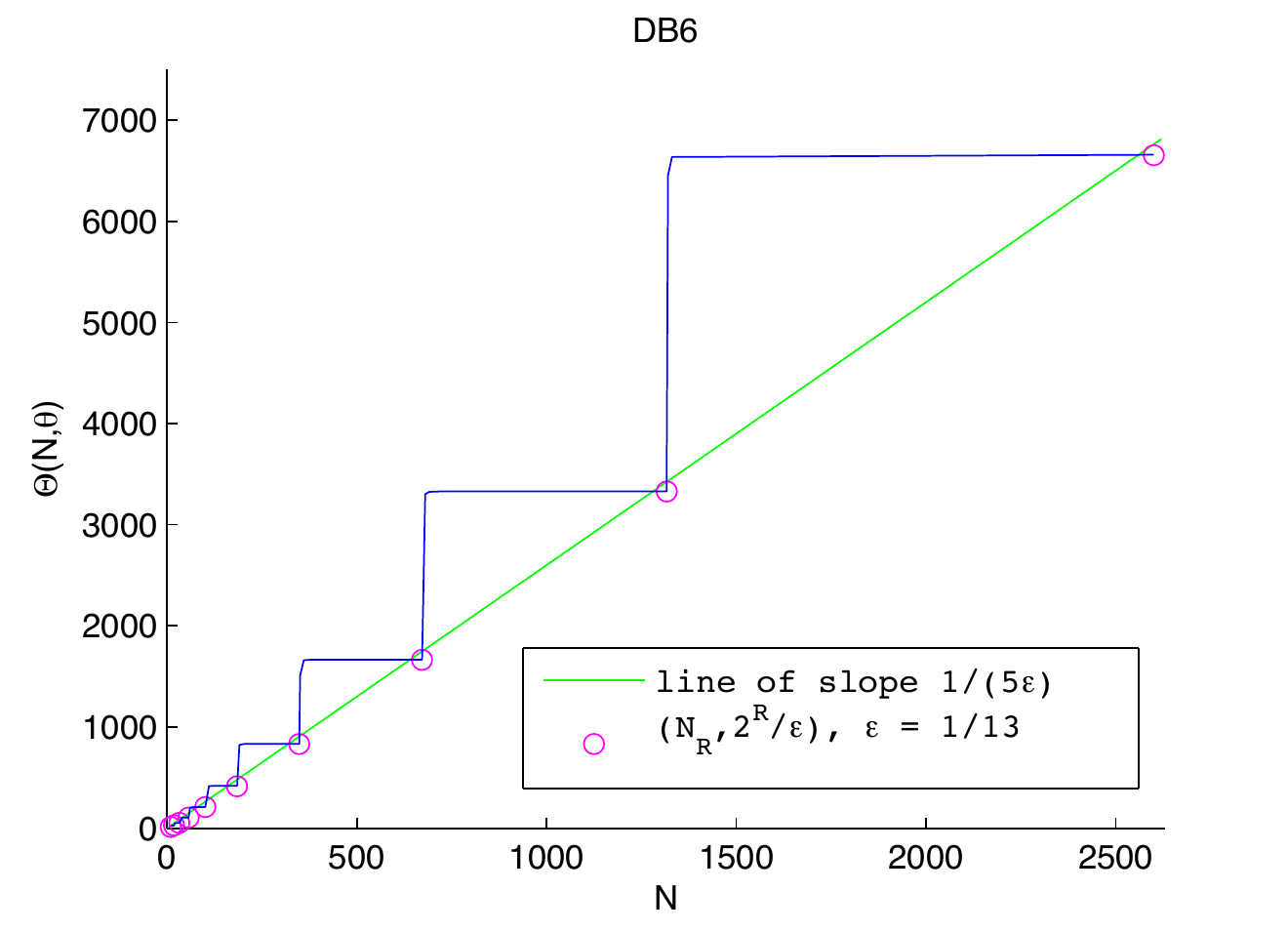}\\
}
\caption{The figure displays the stable sampling rate $\Theta(N;\theta_1)$ and $\Theta(N;\theta_2)$ in blue for the Daubechies-4 wavelet (left) and the Daubechies-6 wavelet (right) with Fourier samples at a sampling density $\epsilon = 1/7$  and $\epsilon = 1/13$ respectively.\label{fig:haar05}}
\end{figure}

In the case of the DB4 and DB6 wavelets, the Fourier sampling space must be of sampling density $\epsilon\leq 1/7$ and 
$\epsilon \leq 1/13$ respectively . Computationally we may observe that 
$$
\theta_1^{-1} = 0.684 < \inf_{x\in[-\pi,\pi]}\abs{\hat\phi_{DB4}(x)}  , \qquad 
\theta_2^{-1} = 0.698 < \inf_{x\in[-\pi,\pi]}\abs{\hat\phi_{DB6}(x)},
$$
where $\phi_{DB4}$ and $\phi_{DB6}$ are the scaling function of the DB4 and DB6 wavelets respectively. So, again, as displayed in Figure \ref{fig:haar05}, we have
$$
\Theta\left(N_R; \theta_1\right) = 7\cdot 2^R, \qquad 
\Theta\left(N_R;\theta_2\right) = 13 \cdot2^R, \qquad R \in \mathbb{N},
$$
which confirms (\ref{eq:daub_ssr2}). Moreover, by (\ref{eq;asympt}), we have that
$$
\Theta(N_R;\theta_1 ) \sim N_R\frac{1}{3\epsilon}, \qquad \Theta(N_R;\theta_2) \sim N_R\frac{1}{5\epsilon}
$$
which is verified in Figure \ref{fig:haar05} via the green line.

\begin{remark}
Note that 
$$
\Theta (N_R;\theta) < \Theta(N;\theta) \leq \Theta(N_{R+1};\theta),\quad N_R < N \leq N_{R+1}.
$$
The staircase effect witnessed in the figures suggests that the upper bound is in fact an equality.  Hence, although the stable sampling rate is linear for all $N$, from the point of view of the stable sampling rate at least, there is nothing to be gained from allowing $N \neq N_R$.
\end{remark}

\subsection{Generalized sampling and function reconstruction}
In this section we demonstrate the power of generalized sampling as recovery scheme of wavelet coefficients in practice. Given the result on the stable sampling rate above we have now full control over how to balance the number of Fourier samples versus the number of wavelet coefficients in order to get a stable and quasi-optimal reconstruction. This combination of quasi-optimality and the linearity of the stable sampling rate means that any decay in the wavelet coefficients of the underlying signal is preserved in the generalized sampling reconstruction.
 
In these experiments with Daubechies wavelets we will use the predicted  value  from (\ref{eq;asympt}), namely, the number of samples $M$ should asymptotically satisfy 
$$
M = \frac{N}{\epsilon \left\lceil a\right\rceil},
$$
where $N$ is the number of coefficients to be computed, $\epsilon$ is the sampling density and $a$ is the maximum value of the support of the mother wavelet.

We will also demonstrate, as predicted by Theorem \ref{thrm:main2}, that failure of satisfying the stable sampling rate gives a completely unstable and even non-convergent reconstruction. In this case we will chose the disastrous value
$$
M = cN, \qquad c < \frac{1}{\epsilon \left\lceil a\right\rceil},
$$
which causes the condition number of the algorithm to blow up exponentially. It also makes the constant in the error bound blow up at the same rate and thus one gets a non-convergent method.

\begin{table}
\begin{center}
{\small{
\begin{tabular}{|c| c| c| c|c|}

\thickhline
 
$(M,N,\alpha)$   	& 	 $\|f-f_M\|_{L^{2}}$	& 	 $\|f-\tilde f_{N,M}\|_{L^{2}}$  	&	 $- \frac{\log \| f - \tilde{f}_{N,M} \|}{\log N}$	&	Wavelet	\T\B\\\hline
$(906, 348,2)$	& 	$6.3\times 10^{-4}$	& 	$8.9\times 10^{-5}$	& 	1.59	& 	DB 3	\T\\
$(1748,672,2)$	& 	$2.9\times 10^{-4}$	& 	$3.3\times 10^{-5}$	& 	1.59	& 	DB 3	\\
$(3422, 1316,2)$	& 	$1.6\times 10^{-4}$	& 	$1.2\times 10^{-5}$	& 	1.58	& 	DB 3	\B\\\hline
$(934,400,2.5)$	&	$2.3\times 10^{-3}$	&	$3.1\times 10^{-6}$	&	2.12	&	DB 2	\T\\
$(1834,786,2.5)$	& 	$1.2\times 10^{-3}$	& 	$8.1\times 10^{-7}$	& 	2.10	& 	DB 2	\\
$(3632,1556,2.5)$	& 	$6.3\times 10^{-4}$	& 	$2.0\times 10^{-7}$	& 	2.10	& 	DB 2	\B\\\hline
$(256,256,3)$	& 	$1.4\times 10^{-2}$	& 	$4.2\times 10^{-7}$	& 	2.65	& 	Haar	\T\\
$(512,512,3)$	& 	$1.2\times 10^{-2}$	& 	$7.5\times 10^{-8}$	& 	2.63	& 	Haar	\\
$(1024,1024,3)$	& 	$1.2\times 10^{-2}$	& 	$1.3\times 10^{-8}$	& 	2.62	& 	Haar	\B\\\hline

\end{tabular}
   }}
  \end{center}
  \caption{The table shows the error of the reconstructions based on classical Fourier series, $f_M$, as well as generalized sampling 
  $\tilde f_{N,M}$ with different types of wavelets. Note that both $f_M$ and $f_{N,M}$ use exactly the same samples.}
\label{t:numexp1}
 \end{table}

The test functions will be of the form
\begin{equation}\label{eq;decay}
f = \sum_{j=1}^{3\times 10^3} \beta_j \varphi_j, \qquad \beta_j = j^{-\alpha}, \quad \alpha > 1,
\end{equation}
where the $\varphi_j$s are different types of Daubechies wavelets. We let $\tilde{f}_{N,M}$ denote the function that is constructed with 
generalized sampling using $M$ Fourier coefficients as samples and then reconstructing by computing $N$ approximate wavelet coefficients. In other words, $\tilde{f}_{N,M}$ is the solution to 
\begin{align}
 \ip{P_M \tilde{f}_{N,M}}{\varphi_j}=\ip{P_M f}{\varphi_j}, \ \ j=1,\ldots,N,
\end{align}
where $P_M$ is the projection onto the sampling space $\mathcal{S}_M$, where $\mathcal{S}_M$ is defined in (\ref{eq:S_m}).
As a comparison we will use the truncated Fourier series
$$
f_M = P_Mf = \sum_{j=1}^M \langle f,s_j\rangle s_j.
$$ 
We will sometimes assume that the samples $ \langle f,s_j\rangle$ are contaminated with noise and thus we observe
$$
\xi = \{ \langle f,s_1\rangle, \hdots,  \langle f,s_M\rangle\} + v, \qquad \|v\| = \varepsilon,
$$
for some noise level $\varepsilon \geq 0.$
Note that $f_M$ and $\tilde{f}_{N,M}$ use exactly the same information sampled.

\begin{table}
\begin{center}
{\small{
\begin{tabular}{|c| c| c| c| c|}

\thickhline
 
$(M,N,\alpha)$   	& 	 $\|f-f_M\|_{L^{2}}$	& 	 $\|f-\tilde f_{N,M}\|_{L^{2}}$  	&	Noise Level $\varepsilon$ 	&	Wavelet	\T\B\\\hline
$(934,400,2.5)$	&	$1.0\times 10^{-1}$	&	$9.7\times 10^{-2}$	&	$1.0\times 10^{-1}$	&	DB 4	\T\\
$(1834,786,2.5)$	& 	$1.0\times 10^{-2}$	& 	$9.7\times 10^{-3}$	& 	$1.0\times 10^{-2}$	& 	DB 4	\\
$(3632,1556,2.5)$	& 	$1.2\times 10^{-3}$	& 	$9.8\times 10^{-4}$	& 	$1.0\times 10^{-3}$	& 	DB 4	\B\\\hline
$(256,256,3)$	& 	$1.3\times 10^{-2}$	& 	$1.2\times 10^{-4}$	& 	$1.0\times 10^{-4}$	& 	Haar	\T\\
$(512,512,3)$	& 	$1.2\times 10^{-2}$	& 	$1.2\times 10^{-5}$	& 	$1.0\times 10^{-5}$	& 	Haar	\\
$(1024,1024,3)$	& 	$1.2\times 10^{-2}$	& 	$1.2\times 10^{-6}$	& 	$1.0\times 10^{-6}$	& 	Haar	\B\\\hline

\end{tabular}
   }}
  \end{center}
   \caption{The table shows the error of the reconstructions based on classical Fourier series, $f_M$, as well as generalized sampling 
  $\tilde f_{N,M}$ with different types of wavelets, where the samples are contaminated with noise. Note that both $f_M$ and $f_{N,M}$ use exactly the same samples.}
\label{t:numexp2}
 \end{table}

\begin{table}
\begin{center}
{\small{
\begin{tabular}{|c| c| c| c| c| c|}

\thickhline
 
$(M,\alpha)$   	& 	 $\|f-f_M\|_{L^{2}}$	& 	$\|f-\tilde f_{M/c,M}\|_{L^{2}}$ 	&	 $\|f-\tilde f_{M/c_1,M}\|_{L^{2}}$	&			Noise Level $\varepsilon$ 	&	Wavelet	\T\B\\\hline
$(482,3)$	&	$4.7\times 10^{-3}$	&	$7.3\times 10^{-7}$	&	$2.8\times 10^{-2}$	&			0	&	DB 4	\T\\
$(934,3)$	& 	$2.4\times 10^{-3}$	& 	$1.4\times 10^{-7}$	& 	$5.4\times 10^{-2}$	& 			0	& 	DB 4	\\
$(1834,3)$	& 	$1.2\times 10^{-3}$	& 	$2.6\times 10^{-8}$	& 	$1.4\times 10^{-2}$	& 			0	& 	DB 4	\B\\\hline
$(482,3)$	& 	$4.7\times 10^{-3}$	& 	$9.6\times 10^{-6}$	& 	$6.7\times 10^{2}$	& 			$1.0\times 10^{-5}$	& 	DB 4	\T\\
$(934,3)$	& 	$2.4\times 10^{-3}$	& 	$9.5\times 10^{-6}$	& 	$4.7 \times 10^3$	& 			$1.0\times 10^{-5}$	& 	DB 4	\\
$(1834,3)$	& 	$1.2\times 10^{-3}$	& 	$9.7\times 10^{-6}$	& 	$1.9\times 10^3$	& 			$1.0\times 10^{-5}$	& 	DB 4	\B\\\hline

\end{tabular}
   }}
  \end{center}
   \caption{The table shows the error of the reconstructions based on classical Fourier series, $f_M$, as well as generalized sampling 
  $\tilde f_{N,M}$ with $N = M/c$ and $N= M/c_1$, with noiseless and noisy data. Note that $f_M$ and $f_{N,M}$ use exactly the same samples.}
\label{t:numexp3}
 \end{table}

The fact that 
$$
\|f-f_M\| = \|P_M^{\perp}f\|, \qquad \|f-\tilde f_{N,M}\| \leq \theta \|Q_N^{\perp}f\|,
$$
together with (\ref{eq;decay}) show that the reconstruction created by generalized sampling will asymptotically outperform the reconstruction based on the truncated Fourier series on the types of functions described in (\ref{eq;decay}).
In particular, since $\tilde{f}_{N,M}$ is quasi-optimal, we have that 
$$
- \frac{\log \| f - \tilde{f}_{N,M} \|}{\log N} \approx \alpha - \frac{1}{2}
$$
for large $N$. This is verified in Table \ref{t:numexp1}.

Also, observe in Table \ref{t:numexp2} the predicted stability of generalized sampling. In particular, the condition number of generalized sampling is equal to $\theta$ which in the case of this experiment is $\pi/2$ for the Haar case and $1.46$ for the DB4.  As expected, the error in generalized sampling is of the same order of magnitude as the noise level. Note that the reconstruction based on the truncated Fourier series is also stable and its error in the upper part of the table also follows the noise level closely. However, in the lower half of the table, the noise level is much smaller than the error caused by the slow convergence of the truncated Fourier series and thus its error is dominated by the error from the tail of the Fourier series.

In Table \ref{t:numexp3} we demonstrate that if the number of samples $M$ does not satisfy the stable sampling rate we get an unstable and non-convergent method. In particular, we compare the choices 
$$
M = cN, \quad c = \frac{1}{\epsilon \left\lceil a\right\rceil}, \qquad M = c_1N, \quad c_1 = 0.95c. 
$$ 
As verified in Table \ref{t:numexp3} the latter choice gives disastrous results.

\section{Conclusions and future work}\label{s:conclusion}
The aim of this paper has been to show that generalized sampling solves the problem of computing one-dimensional wavelet coefficients in a stable and accurate manner from Fourier samples.  In particular, we have proved that the stable sampling rate is linear for all wavelets, and thus generalized sampling is, up to a constant factor, an optimal method for this problem.  Furthermore, we have shown that, for the class of perfect reconstruction methods, any attempt to lower the stable sampling ratio necessarily results in exponential ill-conditioning.

Perhaps the most important direction for future work is the extension of this analysis to the higher-dimensional setting.  We expect that much of the analysis carried out in this paper can be generalized in this way, and this currently work in progress.  Higher dimensions also opens the possibility for using more exotic approximation systems, such as contourlets \cite{Vetterli, Do},  curvelets \cite{Cand, Candes2002} and shearlets \cite{Gitta, Gitta2, GItta3}.  This is another topic for future work.

Besides the theory, the main hurdle to overcome in passing to higher dimensions is that of computational complexity.  As discussed in \cite{BAACHAccRecov}, this is $\mathcal{O}(N M)$ in general (i.e. $\mathcal{O}(N^2)$ whenever the stable sampling rate is linear, such as in the wavelet case), since one is required to solve a dense $M \times N$ well-conditioned least-squares problem. In two or more dimensions this value becomes prohibitively large.  However, for wavelets at least, the corresponding matrix is extremely structured. In the Haar wavelet case, for example, it can be decomposed using a combination of the discrete wavelet and discrete Fourier transform.  Hence the computational cost reduces to $\mathcal{O}(N \log N)$.  It is therefore highly likely that for general wavelets the complexity of computing the reconstruction can be similarly reduced to only $\mathcal{O}(N \log N)$, paving the way for implementable algorithms in higher dimensions.

Another topic we have not addressed is that of sparsity.  The generalized sampling framework studied in this paper guarantees recovery of all signals in a wavelet basis from their Fourier samples. However, suppose now that the signal to be recovered is in fact sparse in the wavelet domain, or compressible (i.e. well approximated by a sparse signal).  Can this property be exploited to reduce the number of Fourier samples used in recovering the signal?

An abstract framework for sparsity-exploiting generalized sampling was recently developed in \cite{BAACHGSCS}.  Note that this is intimately related to the field of compressed sensing \cite{candesCSMag,EldarKutyniokCSBook,FornasierRauhutCS}.  However, unlike the standard compressed sensing framework, which models signals as finite length vectors in vector spaces, the framework developed in \cite{BAACHGSCS} models signals as elements of separable, infinite-dimensional Hilbert spaces. As discussed in \cite{BAACHGSCS}, the infinite-dimensional model can often be more faithful to the original problem, leading to significant potential benefits.  For example, in the MRI problem -- which is best modelled by the continuous, as opposed to the discrete, Fourier transform -- it allows one to avoid the issues raised in Remark \ref{r:DFT}.

The aim of future work in this direction is to combine the results of this paper with the framework of \cite{BAACHGSCS} so as to obtain a full theory for wavelet reconstructions of compressible signals from Fourier samples.  In particular, the analogue of the stable sampling rate in \cite{BAACHGSCS}, known as the \textit{balancing property}, must be first analysed. Moreover, compressed sensing relies on so-called incoherence between sampling and reconstruction bases. This must also be estimated.

Another open problem involves the question of Fourier samples taken non-uniformly.  In this paper we have considered only Fourier samples taken  on a regular lattice.  However, non-uniform sampling patterns are more common in applications.  The question of generalized sampling for non-uniform Fourier samples was considered previously in \cite{BAACHOptimality} within the setting of Fourier frames.  We believe that the key results proved herein regarding the behaviour of the stable sampling rate can be extended to this case.


\addcontentsline{toc}{section}{References}
\bibliographystyle{abbrv}
\bibliography{References}

\end{document}